\DeclareMathOperator{\gm}{GM}
\DeclareMathOperator{\usa}{USA}
\DeclareMathOperator{\im}{im}
\DeclareMathOperator{\kar}{char}
\DeclareMathOperator{\Sym}{Sym}
\DeclareMathOperator{\tr}{tr}
\DeclareMathOperator{\pf}{pf}
\DeclareMathOperator{\sgn}{sgn}
\DeclareMathOperator{\Tr}{Tr}
\DeclareMathOperator{\GL}{GL}
\DeclareMathOperator{\End}{End}
\DeclareMathOperator{\Sper}{Sper}
\DeclareMathOperator{\opm}{M}
\DeclareMathOperator{\diag}{diag}
\newcommand{\N}{\mathbb{N}}
\newcommand{\Z}{\mathbb{Z}}
\newcommand{\Q}{\mathbb{Q}}
\newcommand{\R}{\mathbb{R}}
\newcommand{\C}{\mathbb{C}}
\newcommand{\HH}{\mathbb{H}}
\newcommand{\TT}{\mathbb{T}}
\newcommand{\cA}{\mathcal{A}}
\newcommand{\cB}{\mathcal{B}}
\newcommand{\cC}{\mathcal{C}}
\newcommand{\cJ}{\mathcal{J}}
\newcommand{\JJ}{\mathscr{J}}
\newcommand{\cN}{\mathcal{N}}
\newcommand{\cR}{\mathcal{R}}
\newcommand{\cO}{\mathcal{O}}
\newcommand{\cQ}{\mathfrak{Q}}
\newcommand{\cT}{\mathfrak{T}}
\newcommand{\ti}{\mathsf t}
\newcommand{\si}{\mathsf s}
\newcommand{\ve}{\varepsilon}
\newcommand{\Grad}{H}
\newcommand{\ulx}{\boldsymbol{x}}
\newcommand{\ulxi}{\boldsymbol{\xi}}
\newcommand{\Langle}{\mathop{<}\!}
\newcommand{\Rangle}{\!\mathop{>}}
\newcommand{\px}{F\!\Langle \ulx,\ulx^*\Rangle}
\newcommand{\bOmega}{\mbox{$\hspace{0.26em}\rule[0.04em]{0.04em}{0.65em}\hspace{-0.26em}\Omega\hspace{-0.27em}\rule[0.04em]{0.04em}{0.64em}\hspace{0.27em}$}}
\newcommand{\TR}{\TT_n}
\newcommand{\STR}{\Sym\mathbb{T}_n}
\newcommand{\TP}{\bOmega_n}
\newcommand{\TPc}{\Omega_n}
\newcommand{\rey}{\cR_n}
\newcommand{\mat}{\opm_n(\R)}
\newcommand{\matpoly}{\opm_n(\R[\ulxi])}
\newcommand{\smatpoly}{\Sym\opm_n(\R[\ulxi])}
\newcommand{\matrat}{\opm_n(\R(\ulxi))}
\newcommand{\ort}{\operatorname{O}_n(\R)}
\newcommand{\sort}{\operatorname{SO}_n(\R)}
\newcommand{\cen}{T_n}
\newcommand{\GM}{\gm_n}
\newcommand{\SGM}{\Sym\gm_n}
\newcommand{\ceng}{C_n}
\newcommand{\USA}{\usa_n}
\newcommand{\free}{\mathbb{T}}
\newcommand{\mx}{\Langle \ulx,\ulx^*\Rangle}
\def\moverlay{\mathpalette\mov@rlay}
\def\mov@rlay#1#2{\leavevmode\vtop{
		\baselineskip\z@skip \lineskiplimit-\maxdimen
		\ialign{\hfil$#1##$\hfil\cr#2\crcr}}}
\newtheorem{thm}{Theorem}[section]
\newtheorem{lem}[thm]{Lemma}
\newtheorem{cor}[thm]{Corollary}
\newtheorem{prop}[thm]{Proposition}
\newtheorem{conj}[thm]{Conjecture}
\newtheorem*{prop*}{Proposition}
\newtheorem{thmA}{Theorem}
\newtheorem{thma}{Theorem}
\newtheorem{cora}{Corollary}
\theoremstyle{definition}
\newtheorem{exa}[thm]{Example}
\newtheorem*{exa*}{Example}
\theoremstyle{remark}
\newtheorem{rem}[thm]{Remark}
\numberwithin{equation}{section}
\begin{document}
	
\setcounter{tocdepth}{3}
\contentsmargin{2.55em} 
\dottedcontents{section}[3.8em]{}{2.3em}{.4pc} 
\dottedcontents{subsection}[6.1em]{}{3.2em}{.4pc}
\dottedcontents{subsubsection}[8.4em]{}{4.1em}{.4pc}

\makeatletter
\newcommand{\mycontentsbox}{%

	{\centerline{NOT FOR PUBLICATION}
		\addtolength{\parskip}{-2.3pt}
		\tableofcontents}}
\def\enddoc@text{\ifx\@empty\@translators \else\@settranslators\fi
	\ifx\@empty\addresses \else\@setaddresses\fi
	\newpage\mycontentsbox\newpage\printindex}
\makeatother

\setcounter{page}{1}

\title[Positive trace polynomials]{Positive trace polynomials and \\ the universal Procesi-Schacher conjecture}

\author[I. Klep]{Igor Klep${}^1$}
\address{Igor Klep, Department of Mathematics, University of Auckland}
\email{igor.klep@auckland.ac.nz}
\thanks{${}^1$Supported by the Marsden Fund Council of the Royal Society of New Zealand. Partially supported
by the Slovenian Research Agency grants P1-0222, L1-6722 and J1-8132.}

\author[\v{S}. \v{S}penko]{\v{S}pela \v{S}penko${}^2$}
\thanks{${}^2$The second author is a FWO $[$PEGASUS$]^2$ Marie Sk\l odowska-Curie fellow at the Free University of Brussels
(funded by the European Union Horizon 2020 research and innovation
programme under the Marie Sk\l odowska-Curie grant agreement
No 665501 with the Research Foundation Flanders (FWO)). During part of this work she was also a postdoc with Sue Sierra at the University of Edinburgh}
\address{\v{S}pela \v{S}penko, Departement Wiskunde, Vrije Universiteit Brussel}
\email{spela.spenko@vub.ac.be}

\author[J. Vol\v{c}i\v{c}]{Jurij Vol\v{c}i\v{c}${}^3$}
\address{Jurij Vol\v{c}i\v{c}, Department of Mathematics, Ben-Gurion University of the Negev}
\email{volcic@post.bgu.ac.il}
\thanks{${}^3$Supported by the University of Auckland Doctoral Scholarship and the Deutsche Forschungsgemeinschaft (DFG) Grant No. SCHW 1723/1-1.}

\subjclass[2010]{Primary 16R30, 13J30; Secondary 16W10, 14P10.}
\date{\today}
\keywords{Trace ring, generic matrices, positive trace polynomial, Positivstellensatz, Real Nullstellensatz, Procesi-Schacher conjecture.}


\begin{abstract}
Positivstellens{\"a}tze are fundamental results in real algebraic geometry providing algebraic certificates for positivity of polynomials on semialgebraic sets. In this article Positivstellens\"atze for trace polynomials positive on semialgebraic sets of $n\times n$ matrices are provided. A Krivine-Stengle-type Positivstellensatz is proved characterizing trace polynomials nonnegative on a general semialgebraic set $K$ using weighted sums of hermitian squares with denominators. The weights in these certificates are obtained from generators of $K$ {\it and} traces of hermitian squares. For compact semialgebraic sets $K$ Schm\"udgen- and Putinar-type Positivstellens\"atze are obtained: every trace polynomial positive on $K$ has a sum of hermitian squares decomposition with weights and without denominators. The methods employed are 
inspired by 
invariant theory, classical real algebraic geometry and functional analysis.

Procesi and Schacher in 1976 developed a theory of orderings and positivity on central simple algebras with involution and posed a Hilbert's 17th problem for a universal central simple algebra of degree $n$: is every totally positive element a sum of hermitian squares? They gave an affirmative answer for $n=2$. In this paper a negative answer for $n=3$ is presented. Consequently, 
including
traces of hermitian squares as weights 
in the Positivstellens\"atze
is indispensable.\looseness=-1
\end{abstract}


\maketitle


\section{Introduction}\label{s:intro}

Positivstellensätze are pillars of modern real algebraic geometry \cite{BCR,PD,Mar,Sch1}. A Positivstellensatz is an algebraic certificate for a real polynomial to be positive on a set described by polynomial inequalities. For a finite set $S\subset\R[\ulxi]=\R[\xi_1,\dots,\xi_g]$ let $K_S$ denote the semialgebraic set of points $\alpha\in\R^g$ for which $s(\alpha)\ge0$ for all $s\in S$. The most fundamental result here is the Krivine-Stengle Positivstellensatz (see e.g. \cite[Theorem 2.2.1]{Mar}), which characterizes polynomials that are positive on $K_S$ as weighted sums of squares with denominators, where weights are products of elements in $S$. This theorem is the real  analog of Hilbert's Nullstellensatz and a far-reaching generalization of Artin's solution to Hilbert's 17th problem. If the set $K_S$ is compact, a simpler description of strict positivity on $K_S$ is given by Schmüdgen's Positivstellensatz \cite{Schm}. If moreover $S$ generates an archimedean quadratic module, then Putinar's Positivstellensatz \cite{Put} presents an even simpler form of strictly positive polynomials on $K_S$. The latter leads to a variety of applications of real algebraic geometry via semidefinite programming \cite{WSV,BPT}
 in several areas of applied mathematics and engineering. By adapting the notion of a quadratic module and a preordering to $\matpoly$, many of the results described above extend to matrix polynomials \cite{GR,SH,Cim}.

Positivstellensätze are also key in noncommutative real algebraic geometry \cite{HP,Schm1,Oza}, where the theory essentially divides into two parts between which there is increasing synergy. The dimension-free branch started with Helton's theorem characterizing free 
noncommutative
polynomials, which are positive semidefinite on all matrices of all sizes, as sums of hermitian squares \cite{Hel}. This principal result was followed by various Positivstellensätze in a free algebra \cite{HM,HKM,KVV}, often with cleaner statements or stronger conclusions than their commutative counterparts. These dimension-free techniques are also applied to positivity in operator algebras \cite{NT,Oza1} and free probability \cite{GS}. Trace positivity of free polynomials presents the algebraic aspect of the renowned Connes' embedding conjecture \cite{KS,Oza}. In addition to convex optimization \cite{BPT}, free positivity certificates frequently appear in quantum information theory \cite{NC} and control theory \cite{BEFB}. On the other hand, the dimension-dependent branch is less developed. Here the main tools come from the theory of quadratic forms, polynomial identities and central simple algebras with involution \cite{KMRT,Row2,AU0}. 
A fundamental result in this context, 
a Hilbert's 17th problem, was solved by Procesi and Schacher \cite{PS}:
totally positive elements in a central simple algebra with a positive involution are weighted sums of hermitian squares, and the weights arise as traces of hermitian squares. Analogous conclusions hold for trace positive polynomials \cite{Kle}.
The basic problem here is whether
the traces of hermitian squares are actually needed; cf.~\cite{KU,AU,SS}.

We next outline the contributions of this paper. Let $\free$ be the {\bf free trace ring}, i.e., the $\R$-algebra with involution $*$ generated by noncommuting variables $x_1,\dots,x_g$ and symmetric commuting variables $\Tr(w)$ for words $w$ in $x_j,x_j^*$ satisfying $\Tr(w_1w_2)=\Tr(w_2w_1)$ and $\Tr(w^*)=\Tr(w)$. Let $\Sym \free$ be the subspace of symmetric elements, $T$ the center of $\free$ and $\Tr:\free\to T$ the natural $T$-linear map. For a fixed $n\in\N$, the evaluation of $\free$ at $X\in\mat^g$ is defined by $x_j\mapsto X_j$, $x_j^*\mapsto X_j^{\ti}$ and $\Tr(w)\mapsto \tr(w(X))$. 

\begin{exa*}
Consider $f=5 \Tr(x_1x_1^*)-2 \Tr(x_1)(x_1+x_1^*)\in\free$. We claim that
$f$ is positive (semidefinite) on $\opm_2(\R)$. For $X\in\opm_2(\R)$ write
$$H_1=X-X^{\ti},\qquad H_2=XX^{\ti}-X^{\ti}X,\qquad H_3=X^2-2XX^{\ti}+2X^{\ti}X-(X^{\ti})^2 .$$
If $H_1$ is invertible, then one can check (see Example \ref{exa:intro} for details) that
$$f(X)=\frac52 H_1H_1^{\ti}+\frac12 H_1^{-1}H_2H_2^{\ti}H_1^{-\ti}+\frac12 H_1^{-1}H_3H_3^{\ti}H_1^{-\ti}$$
and hence $f(X)\succeq0$, so $f\succeq0$ on $\opm_2(\R)$ by continuity. On the other hand, $f$ is not positive on $\opm_3(\R)$:
$$\renewcommand*{\arraystretch}{.8}
f\Big(\!\begin{pmatrix}
2 & 0 & 0\\ 0 & 1 & 0 \\ 0 & 0 & 1
\end{pmatrix}
\!\Big) = 
5 \Tr\Big(\!\begin{pmatrix}
4 & 0 & 0\\ 0 & 1 & 0 \\ 0 & 0 & 1
\end{pmatrix}\!\Big) I_3
- 2 \Tr \Big(\!\begin{pmatrix}
2 & 0 & 0\\ 0 & 1 & 0 \\ 0 & 0 & 1
\end{pmatrix}\!\Big) \begin{pmatrix}
4 & 0 & 0\\ 0 & 1 & 0 \\ 0 & 0 & 1
\end{pmatrix}
=
\begin{pmatrix}
 -2 & 0 & 0 \\
 0 & 14 & 0 \\
 0 & 0 & 14  
 \end{pmatrix}\not\succeq0.\renewcommand*{\arraystretch}{1}
$$
In the rest of the paper we will develop a  systematic theory for positivity of trace polynomials.
\end{exa*}

For $S\subset\Sym\free$ let
$$K_S=\left\{X\in\mat^g\colon s(X)\succeq0\ \forall s\in S \right\}.$$
If $S$ is finite, then $K_S$ is the {\bf semialgebraic set} described by $S$. A set $\cQ\subset \Sym\free$ is a {\bf cyclic quadratic module} if
$$1\in\cQ,\quad \cQ+\cQ\subseteq \cQ,
\quad h\cQ h^*\subseteq \cQ\ \ \forall h\in\free,
\quad \Tr(\cQ)\subset \cQ.$$
A cyclic quadratic module $\cT$ is a {\bf cyclic preordering} if $T\cap \cT$ is closed under multiplication.

\begin{prop*}
If $\cT\subset\free$ is a cyclic preordering, then $f|_{K_{\cT}}\succeq0$ for every $f\in\cT$.
\end{prop*}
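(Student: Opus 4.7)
The plan is to show that for any fixed $X\in\mat^g$, the set
\[
\cP_X := \{\, s\in\Sym\free \,:\, s(X)\succeq 0\,\}
\]
is itself a cyclic preordering of $\free$. Once this is established, the assumption $X\in K_\cT$ reads precisely as $\cT\subseteq \cP_X$, so for any $f\in\cT$ we obtain $f(X)\succeq 0$; ranging over $X\in K_\cT$ then yields $f|_{K_\cT}\ge 0$.

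The verification that $\cP_X$ is a cyclic preordering reduces to checking that each closure operation preserves positive semidefiniteness under the evaluation homomorphism $\varphi_X\colon \free\to\opm_n(\R)$. First, $\varphi_X(1)=I_n\succeq 0$. If $a(X),b(X)\succeq 0$, then $(a+b)(X)=a(X)+b(X)\succeq 0$, so $\cP_X$ is closed under addition. For any $h\in\free$ and $a\in\cP_X$, $(hah^{\ti})(X)=\varphi_X(h)\,a(X)\,\varphi_X(h)^{\ti}\succeq 0$, since conjugation by any matrix preserves PSD. For the trace closure, $\Tr(a)(X)=\tr(a(X))\,I_n$, and the trace of a PSD matrix is nonnegative, so this is PSD whenever $a(X)\succeq 0$. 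Finally, any $a\in T$ evaluates at $X$ to a scalar matrix $a(X)=\alpha\,I_n$, because the center $T$ of $\free$ maps into the center $\R\cdot I_n$ of $\opm_n(\R)$; thus for $a,b\in T\cap \cP_X$ with $a(X)=\alpha I_n$, $b(X)=\beta I_n$ and $\alpha,\beta\ge 0$, we have $(ab)(X)=\alpha\beta\,I_n\succeq 0$, establishing multiplicative closure on $T\cap \cP_X$.

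I do not anticipate a substantive obstacle: the proposition is in essence a soundness statement for the notion of cyclic preordering as a certificate of positivity on semialgebraic sets, and it underpins the converse (Positivstellensatz-type) direction that the paper develops in earnest. The only point demanding minor care is the multiplicative closure step for central elements, where one uses that $T$ evaluates into scalar matrices, which is immediate from the definition of $T$ as generated by the central symbols $\Tr(w)$ mapping to $\tr(w(X))\in\R\cdot I_n$.
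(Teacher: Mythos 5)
Your proof is correct and complete. The paper states this proposition in the introduction as a ``simple proposition'' and does not supply an explicit proof (the analogous directions $(\Leftarrow)$ in Theorem 4.9 are likewise declared ``straightforward''), so there is no paper argument to compare against; your verification is exactly the natural one. The key observations are all in place: the evaluation $\varphi_X\colon\free\to\opm_n(\R)$ is a unital $*$-homomorphism (so $\varphi_X(h^{\ti})=\varphi_X(h)^{\ti}$), $\Tr(a)$ evaluates to the scalar matrix $\tr(a(X))I_n$ whose nonnegativity follows from $a(X)\succeq0$, and the center $T$ lands in $\R\cdot I_n$ so that multiplicativity of $T\cap\cP_X$ reduces to the product of two nonnegative reals. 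Packaging these checks as the statement that $\cP_X$ is itself a cyclic preordering, so that $X\in K_\cT$ is equivalent to $\cT\subseteq\cP_X$, is a clean way to organize the argument; one could equally well use the explicit description of elements of a cyclic preordering (cf.\ Lemma 4.1) and verify positivity term by term, but your route avoids invoking that lemma.
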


The converse of this simple proposition fails in general, but the next noncommutative version of the Krivine-Stengle Positivstellensatz 
uses
cyclic preorderings to
describe noncommutative polynomials positive semidefinite on a semialgebraic set $K_S$.

\begin{thma}\label{ta:free}
Let $S\cup\{f\}\subset\Sym\free$ be finite and let $\cT$ be the smallest
cyclic preordering containing $S$. Then $f|_{K_S}\succeq0$ if and only if
\begin{equation*}
(t_1f)|_{\mat^g}=(f^{2k}+t_2)|_{\mat^g}\qquad \text{and} \qquad (ft_1)|_{\mat^g}=(t_1f)|_{\mat^g}
\end{equation*}
for some $k\in\N$ and $t_1,t_2\in\cT$.
\end{thma}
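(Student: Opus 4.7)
The proof splits into the easy ``if'' direction and the substantive ``only if'' direction. For the former I plan a short spectral argument built on the preceding proposition; for the latter I plan to transfer the problem to a commutative setting via the invariant theory underlying $\free$, invoke the classical Krivine-Stengle Positivstellensatz on the centre $T$, and lift the scalar certificate back to a matricial identity in $\free$.

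For the forward direction, fix $X\in K_S$ and assume the two identities. By the preceding proposition $t_1(X)\succeq 0$ and $t_2(X)\succeq 0$. The commutation $(ft_1)(X)=(t_1f)(X)$ combined with $t_1(X)f(X)=f(X)^{2k}+t_2(X)$ forces $t_2(X)=t_1(X)f(X)-f(X)^{2k}$ to commute with both $f(X)$ and $t_1(X)$, so all three symmetric real matrices are simultaneously diagonalisable. In a common eigenbasis the scalar identity $\lambda\mu=\mu^{2k}+\nu$ holds on each axis with $\lambda,\nu\ge 0$ and $\mu\in\R$: if $\mu<0$ then the left side is $\le 0$ while the right is $\ge 0$, so both vanish and $\mu^{2k}=0$, contradicting $\mu<0$. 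Hence every eigenvalue of $f(X)$ is nonnegative and $f(X)\succeq 0$.

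For the converse, the plan is to pass through Procesi's invariant theory of trace polynomials. The evaluation map on $\mat^g$ factors through the trace algebra of $n\times n$ generic matrices, whose centre is a ring of $\ort$-invariant polynomials in the generic matrix entries. Under this correspondence each element of $\Sym\free$ becomes a symmetric matrix with entries in the invariant centre, say $f\mapsto F$ and $s\mapsto S_s$ for $s\in S$, and $K_S$ is pulled back to the locus where every $S_s\succeq 0$. A real symmetric $n\times n$ matrix is positive semidefinite iff all elementary symmetric functions of its eigenvalues are nonnegative, and Newton's identities rewrite these as polynomials in the power sums $\tr(F^i)=\Tr(f^i)\in T$ for $i=1,\dots,n$. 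In this way the hypothesis $f|_{K_S}\succeq 0$ is converted into a finite conjunction of scalar polynomial inequalities in $T$, and the classical Krivine-Stengle Positivstellensatz produces a scalar certificate in the preordering of $T$ generated by the traces of the defining matrix inequalities.

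The main obstacle is lifting this scalar certificate to an identity of the shape $t_1f=f^{2k}+t_2$ in $\free$ together with the commutation $ft_1=t_1f$ on $\mat^g$; this commutation has no analogue in the commutative Krivine-Stengle and is the feature that makes the noncommutative statement delicate. My strategy is to take $t_1$ of the form $p(f)\cdot\tau$, where $\tau$ is a central element built from traces of hermitian squares and $p(f)\in\R[f]$ is a polynomial in $f$ alone, so that $[t_1,f]=0$ holds automatically after evaluation, while closure of $\cT$ under $\Tr(\cdot)$ and under $h(\cdot)h^{\ti}$ keeps $t_1\in\cT$. The technical heart of the argument is repackaging the scalar Krivine-Stengle identity, naturally written in power sums of eigenvalues of $F$, into this matricial form and choosing the exponent $2k$ correctly so that the error term $t_2$ remains in $\cT$.
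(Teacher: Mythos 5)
Your ``if'' direction is essentially correct: the Proposition preceding the theorem shows $t_1,t_2$ are positive semidefinite on $K_S$, the imposed commutation makes $f(X)$, $t_1(X)$, $t_2(X)$ simultaneously orthogonally diagonalisable, and the eigenvalue-wise identity $\lambda\mu=\mu^{2k}+\nu$ with $\lambda,\nu\geq0$ forces $\mu\geq0$. (Your parenthetical ``so both vanish'' is a slip---for $\mu<0$ the right side is strictly positive and the left is $\leq0$, so one gets an outright contradiction without any vanishing---but the conclusion stands.)

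The converse is where your outline diverges from what actually makes the proof go through, and there are two substantive gaps.

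First, you never explain how to verify the hypothesis of the commutative Krivine--Stengle theorem. You propose to apply it in a commutative ring such as $T$ (or its image $\cen$ in the trace ring), but the abstract Positivstellensatz there requires knowing that $f$ (or its characteristic coefficients $\sigma_j$) lies in \emph{every ordering} of that ring containing the generating preordering. Your hypothesis $f|_{K_S}\succeq0$ is a statement about evaluations at real matrix points, and a priori most orderings of $\cen$ do not come from such evaluations. The bridge is the paper's extension theorem (Theorem~\ref{t:hom}): an $\R$-algebra homomorphism $\cen\to R$ into a real closed field extends to $\R[\ulxi]\to R$ precisely when it is nonnegative on $\TPc$, i.e.\ on traces of hermitian squares. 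Combined with Tarski transfer, this is what lets one pass from $f|_{K_S}\succeq0$ (over $\R$) to $f\in P$ for all orderings $P$ containing the preordering generated by $S\cup\TPc$ (Proposition~\ref{p:ord}). Without something playing this role, ``invoke classical Krivine--Stengle'' is not applicable, and this extension theorem is arguably the main technical contribution behind the theorem. It is also the reason traces of hermitian squares appear as weights in the conclusion at all.

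Second, your proposed fix for the commutation and lifting---taking $t_1=p(f)\cdot\tau$---is ad hoc and never actually carried out, and it would have to fight the same lifting problem you identified. The paper avoids the problem entirely by applying the abstract Krivine--Stengle theorem not in $T$ or $\cen$, but in the \emph{commutative} subring $\cen[a]$ of $\TR$ (where $a$ is the image of $f$). The preordering there is generated by the reduced central constraints together with $\TPc$. Because $a$ itself is an element of that commutative ring, the abstract theorem delivers the exact identity $t_1a=a^{2k}+t_2$ with $t_1,t_2$ in that preordering, and the commutation $at_1=t_1a$ is automatic since everything lives in $\cen[a]$. There is no lifting step and no need to engineer $t_1$. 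Relatedly, you also skip the reduction of the matricial constraint set $S$ to a finite set of \emph{central} constraints in $\cen$; this is not just a convenience but uses a compactness argument in the real spectrum (Proposition~\ref{p:scal} and Corollary~\ref{c:scal}), and your remark about Newton's identities gestures at the idea but does not produce the required finite family. In short, the strategy of ``commutative reduction plus Krivine--Stengle'' is right, but the three pillars---finite central reduction, the extension theorem with Tarski transfer, and working in $\cen[a]$---are all missing, and each one is essential.
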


See Theorem \ref{ta:ks} below for an extended version in a slightly different language. The existence of trace identities on $n\times n$ matrices  suggests that the problem of positivity on $n\times n$ matrices should be treated in an appropriate quotient of $\free$, which we describe next.

Let $\TR$ be the {\bf trace ring of generic $n\times n$ matrices}, i.e., the $\R$-algebra generated by generic matrices $\Xi_1,\dots,\Xi_g$, their transposes and traces of their products. Here $\Xi_j=(\xi_{j\imath\jmath})_{\imath\jmath}$ is an $n\times n$ matrix whose entries are independent commuting variables. The $\R$-subalgebra $\GM\subset\TR$ generated by $\Xi_j,\Xi_j^{\ti}$ is called the {\bf ring of generic $n\times n$ matrices} and has a central role in the theory of polynomial identities \cite{Pro,Row2}. The ring of central quotients of $\GM$ is the {\bf universal central simple algebra with orthogonal involution} of degree $n$, denoted $\USA$. The ring $\TR$ also has a geometric interpretation. Let the orthogonal group $\ort$ act on $\mat^g$ by simultaneous conjugation. If $\matpoly$ is identified with polynomial maps $\mat^g\to\mat$, then $\TR$ is the ring of polynomial $\ort$-concomitants in $\matpoly$, i.e., equivariant maps with respect to the $\ort$-action \cite{Pro}. If $\ceng,\cen,Z_n$ are the centers of $\GM,\TR,\USA$, respectively, and $\cR$ is the ``averaging'' Reynolds operator for the $\ort$-action, then we have the following diagram.

\begin{center}
\begin{tikzpicture}
\matrix (m) [matrix of math nodes,row sep=1.5em,column sep=1.5em,minimum width=1.5em]
{
	\R & & \ceng & & & \\
	& \R\mx & & \GM & & \\
	T & & \cen & & \R[\ulxi] & \\
	& \free & & \TR & & \matpoly \\
	& & Z_n & & \R(\ulxi) & \\
	& & & \USA & & \opm_n(\R(\ulxi)) \\};
\path[-stealth]
(m-1-1) edge [right hook->] (m-1-3)
(m-2-2) edge [->>] (m-2-4)
(m-3-1) edge [->>] (m-3-3)
(m-3-3) edge [right hook->] (m-3-5)
(m-4-2) edge [->>] (m-4-4)
(m-4-4) edge [right hook->] (m-4-6)
(m-5-3) edge [right hook->] (m-5-5)
(m-6-4) edge [right hook->] (m-6-6)

(m-1-1) edge [right hook->] (m-3-1)
(m-1-3) edge [right hook->] (m-3-3)
(m-3-3) edge [right hook->] (m-5-3)
(m-3-5) edge [right hook->] (m-5-5)
(m-2-2) edge [right hook->] (m-4-2)
(m-2-4) edge [right hook->] (m-4-4)
(m-4-4) edge [right hook->] (m-6-4)
(m-4-6) edge [right hook->] (m-6-6)

(m-1-1) edge [right hook->] (m-2-2)
(m-3-3) edge [right hook->] (m-4-4)
(m-5-5) edge [right hook->] (m-6-6)
(m-1-3) edge [right hook->] (m-2-4)
(m-3-5) edge [right hook->] (m-4-6)
(m-3-1) edge [right hook->] (m-4-2)
(m-5-3) edge [right hook->] (m-6-4)

(m-3-5) edge [bend right,dashed,gray] node[above,near start] {$\cR$} (m-3-3)
(m-4-6) edge [bend right,dashed,gray] node[above,near end] {$\cR$} (m-4-4)
(m-5-5) edge [bend right,dashed,gray] node[above,near start] {$\cR$} (m-5-3)
(m-6-6) edge [bend right,dashed,gray] node[above,near end] {$\cR$} (m-6-4);
\end{tikzpicture}
\end{center}

The elements of $\TR$ are called {\bf trace polynomials} and the elements of $\cen$ are called {\bf pure trace polynomials}. Since every evaluation of $\free$ at a tuple of $n\times n$ matrices factors through $\TR$, it suffices to prove our Positivstellens\"atze in the ring $\TR$. The purpose of this reduction is of course not to merely state Theorem \ref{ta:free} in a more compact form. Our proofs crucially rely on algebraic properties of $\TR$ and their interaction with invariant and PI theory \cite{Pro,Row2}.

The contribution of this paper is twofold. We prove the Krivine-Stengle, Schm\"udgen and Putinar Positivstellens{\"a}tze for the trace ring of generic matrices in terms of cyclic quadratic modules and preorderings. We also prove Putinar's Positivstellensatz for the ring of generic matrices (without traces). The proofs intertwine techniques from invariant theory, real algebraic geometry, PI theory and functional analysis. Our second main result is a counterexample to the (universal) Procesi-Schacher conjecture.

\subsection{Main results and reader's guide}

After this introduction we recall known facts about polynomial identities, positive involutions, and the rings $\GM$ and $\TR$ in Section \ref{s:prelim}, where we also prove some preliminary results that are used in the sequel. 

Section \ref{s:ps3} deals with the question of Procesi and Schacher \cite{PS}, which is a noncommutative version of Hilbert's 17th problem for central simple $*$-algebras. We say that $a\in\USA$ is {\bf totally positive} if $a(X)\succeq0$ for every $X\in\mat^g$ where $a$ is defined. Then the universal Procesi-Schacher conjecture states that totally positive elements in $\USA$ are sums of hermitian squares in $\USA$. While this is true for $n=2$ \cite{PS,KU}, we show that the conjecture fails for $n=3$.

\begin{thmA}\label{ta:ps3}
There exist totally positive elements in $\usa_3$ that are not sums of hermitian squares in $\usa_3$.
\end{thmA}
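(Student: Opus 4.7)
The strategy is to exhibit an explicit totally positive $f \in \usa_3$ and prove, via an obstruction sensitive to the algebra structure of $\usa_3$, that no representation $f = \sum_i h_i^* h_i$ with $h_i \in \usa_3$ exists.

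For the candidate I would first search among central elements $f \in Z_3$, i.e., pure trace rational functions. A natural recipe is to lift to $T_3 \subset Z_3$, through Newton's identities, a ternary form in eigenvalues that is positive semidefinite but not a sum of squares in $\R[\xi_1,\xi_2,\xi_3]$ (of Motzkin-, Robinson- or Choi--Lam-type). Evaluation of such an $f$ at a symmetric matrix $X$ then gives the original form applied to the eigenvalues of $X$, and I would certify total positivity on $\opm_3(\R)^g$ by using the $\ort$-action to reduce to a commutative slice (a Weyl chamber of diagonal matrices) on which positivity becomes the original eigenvalue inequality, treated by Artin's theorem over the formally real field $Z_3$.

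The main work lies in proving the absence of a hermitian SOS decomposition. The key observation is that for central $f \in Z_3$, an identity $f = \sum_i h_i^* h_i$ in $\usa_3$ forces $\sum_i h_i(X)^t h_i(X) = f(X) I_3$ at every specialization, so that the positive semidefinite Gram matrix on the left is scalar. Taking traces yields
\[
3 f = \sum_i \tr(h_i^* h_i) \qquad \text{in } Z_3,
\]
so it suffices to choose $f$ outside the cone $\cC = \{\sum_i \tr(h_i^* h_i) : h_i \in \usa_3\}$. To separate $f$ from $\cC$ I would construct a $\Q$-linear functional $L : Z_3 \to \R$ that is nonnegative on every $\tr(h^*h)$ but negative on $f$; candidates for $L$ are built from a well-chosen specialization $X^0$ combined with higher-order directional data (a Hessian or residue) in directions that the scalarity of $\sum_i h_i(X^0)^t h_i(X^0)$ prevents, or from an ordering on $Z_3$ at which the trace form has exotic signature.

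The separation in the last step is by far the hardest point. Artin's theorem already writes every totally positive $f \in Z_3$ as a sum of squares with denominators in $Z_3$, so the obstruction must detect the strictly finer property of being realized as $\tfrac{1}{3}\sum_i \tr(h_i^* h_i)$ inside the noncommutative algebra $\usa_3$. The argument therefore has to exploit features of degree-$3$ orthogonal involutions that collapse in the degree-$2$ case handled by Procesi and Schacher.
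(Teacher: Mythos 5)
Your overall scaffolding is reasonable, but there are real gaps in both the choice of candidate and, more seriously, in the separation step, which is where essentially all the content lives.

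On the candidate: you propose to start from a Motzkin/Robinson/Choi--Lam-type form that is PSD but not SOS, lifted to $Z_3$ via Newton's identities. This is unlikely to produce the right obstruction. Over the field $Z_3$, Artin already gives SOS-with-denominators representations for every totally positive element, so the ``PSD-not-SOS in a polynomial ring'' phenomenon is not what must be detected; what must be detected is whether a \emph{weighted} SOS (with weights coming from a nontrivial diagonalization of the trace form) can be replaced by an unweighted sum of hermitian squares. Indeed, the paper's counterexample $\beta_1\beta_2$ is itself of the form $\tr(hh^{\ti})$, i.e., a trace of a hermitian square, and total positivity is immediate. Relatedly, your criterion ``choose $f$ outside the cone $\cC=\{\sum_i\tr(h_ih_i^*)\}$'' is sufficient but strictly stronger than needed and, in fact, the paper's $\beta_1\beta_2$ lies \emph{inside} $\cC$; it is not even clear that a totally positive element outside $\cC$ exists. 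Your reduction $f=\sum h_ih_i^*\Rightarrow 3f=\sum\tr(h_ih_i^*)$ is correct, but it discards the diagonal weights that carry the obstruction.

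On the separation, which you yourself flag as ``by far the hardest point'': the paper's argument proceeds very concretely, and none of your suggested functionals (specialization plus Hessian/residue data, or an ordering of $Z_3$) would get off the ground as stated. The actual steps are: (i) $\usa_3$ is split, so one exhibits a $*$-isomorphism $\usa_3\cong\opm_3(Z_3)$ carrying the adjoint involution of an explicit diagonal form $\langle 1,\beta_1,\beta_2\rangle$; this uses idempotents built from antisymmetric elements (Lemma \ref{l:idem1}, Proposition \ref{p:idem2}, due to Tignol). (ii) From a hypothetical decomposition $\beta_1\beta_2=\sum_ir_ir_i^{\ti}$ one reads off the $(1,1)$-entry, getting a weighted sum-of-squares identity over $Z_3$ with weights $\beta_1\beta_2,\ \beta_2,\ \beta_1$; this matrix-entry reduction keeps more information than the trace reduction. (iii) An explicit transcendence basis $\alpha_1,\dots,\alpha_6$ of $Z_3$ and expressions of $\beta_1,\beta_2$ in it are computed (Lemma \ref{l:algindep}). (iv) Prestel's theory of semiorderings produces a semiordering $Q$ on $Z_3=\R(\alpha_1,\dots,\alpha_6)$ with $-\beta_1,-\beta_2,-\beta_1\beta_2\in Q$, which contradicts the weighted SOS identity. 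The key point is that an \emph{ordering} cannot possibly work (every totally positive element is nonnegative in every ordering), so one must pass to the strictly finer notion of a semiordering. Your final suggestion about ``an ordering on $Z_3$ with exotic signature'' gestures at the Astier--Unger signature approach mentioned at the end of Section \ref{s:ps3}, which is a genuine alternative, but as phrased (an ordering) it would not work, and you give no indication of how to construct the required invariant.

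Finally, your proposal to certify total positivity ``by reducing to a Weyl chamber of diagonal matrices'' only makes sense for $g=1$; the paper first does $g=1$ and then extends to arbitrary $g$ by a $*$-homomorphism argument using PI theory, a step your sketch omits.
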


The proof (see Theorem \ref{t:ps3}) relies on 
the central simple $*$-algebra
$\usa_3$ being split, i.e., $*$-isomorphic to $\opm_3(Z_3)$ with some orthogonal involution. After explicitly determining the involution using quadratic forms (Proposition \ref{p:idem2} of Tignol) and a transcendental basis of $Z_3$ (Lemma \ref{l:algindep}), we use Prestel's theory of semiorderings \cite{PD} to produce an  example of a totally positive element 
(a trace of a hermitian square)
in $\usa_3$ that is not a sum of hermitian squares (Proposition \ref{p:usa3}).\looseness=-1

Section \ref{s:ks} first introduces  cyclic quadratic modules and cyclic preorderings for the trace ring $\TR$, which are defined analogously as for $\free$ above. The main result in this section is the following version of the Krivine-Stengle Positivstellensatz for $\TR$.

\begin{thmA}\label{ta:ks}
Let $S\cup\{a\}\subset \STR$ be finite and $\cT$  the cyclic preordering generated by $S$.
\begin{enumerate}[label={\rm(\arabic*)}]
	\item $a|_{K_S}\succeq 0$ if and only if $at_1=t_1a=a^{2k}+t_2$ for some $t_1,t_2\in\cT$ and $k\in\N$.
	\item\label{it:ks2} $a|_{K_S}\succ 0$ if and only if $at_1=t_1a=1+t_2$ for some $t_1,t_2\in\cT$.
	\item $a|_{K_S}= 0$ if and only if $-a^{2k}\in\cT$ for some $k\in\N$.
\end{enumerate}
\end{thmA}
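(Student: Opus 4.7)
The ``if'' direction of each of (1)--(3) is immediate from the proposition above combined with an eigenvalue bookkeeping. At $X\in K_S$ the identity $a(X)t_1(X)=a(X)^{2k}+t_2(X)$, together with the symmetry forcing $a(X)t_1(X)=t_1(X)a(X)$, allows orthogonal simultaneous diagonalization; in an eigenbasis each scalar identity $\lambda\mu=\lambda^{2k}+\nu$ with $\mu,\nu\ge 0$ precludes $\lambda<0$. Parts (2) and (3) are analogous: $\lambda\mu=1+\nu>0$ forces $\lambda>0$, and $-\lambda^{2k}\ge 0$ forces $\lambda=0$.

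For the ``only if'' direction my plan is to reduce to a matrix-valued commutative Positivstellensatz through the embedding $\TR\hookrightarrow\matpoly$, and then to push the resulting identity back into $\TR$ using the Reynolds operator $\cR$ of the $\ort$-action on $\mat^g$ by simultaneous conjugation. Viewing $a$ and $S$ as symmetric matrix polynomials on $\mat^g\cong\R^{gn^2}$, a matrix Krivine--Stengle Positivstellensatz in the vein of \cite{SH,Cim} should yield an identity
\[
a\,Q_1 = a^{2k}+Q_2
\]
in $\matpoly$ with $Q_1, Q_2$ in the matrix preordering generated by $S$; the analogues for (2) and (3) replace $a^{2k}$ by $1$ or $-a^{2k}$ respectively. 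Since $a$ and the $s_j$ lie in $\TR$ and are therefore $\ort$-concomitants, $\cR$ fixes them and yields $a\,\cR(Q_1)=a^{2k}+\cR(Q_2)$ in $\TR$; the commutation $a\cR(Q_1)=\cR(Q_1)a$ then follows automatically from symmetry by transposing the identity.

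The crux, and expected main obstacle, will be showing $\cR(Q_i)\in\cT$. Averaging a matrix hermitian square $hh^{\ti}$ over $\ort$ produces a trace-positive $\ort$-invariant element of $\TR$ rather than a literal hermitian square in $\TR$; this is precisely why weights of the form $\Tr(\text{hermitian square})$ are built into the definition of a cyclic (quadratic module and) preordering. Invoking the Procesi--Schacher/Klep theorem \cite{PS,Kle}, every trace-positive element of $\TR$ admits a decomposition as a cyclic sum of hermitian squares with such trace weights. Applying this rewriting termwise to each $\cR(hh^{\ti})$ occurring in $\cR(Q_i)$, and absorbing the $s_j$-weights already carried by $Q_i$, will place $\cR(Q_i)$ inside $\cT$ and complete the argument for all three parts.
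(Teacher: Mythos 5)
Your ``if'' direction is essentially the same as the paper's (which dismisses it as ``straightforward''), and the simultaneous-diagonalization/eigenvalue bookkeeping is a reasonable way to spell it out.

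The ``only if'' direction, however, is built on an approach the paper itself explicitly identifies as a dead end. In the remark following Lemma~\ref{l:rey} the authors write that $\rey$ ``does not map squares in $\R[\ulxi]$ into $\TPc$ or hermitian squares in $\matpoly$ into $\TP$'', so ``our Positivstellens\"atze in the sequel cannot simply be deduced from their commutative or matrix counterparts by averaging with $\rey$'', and moreover ``even if one were content with using totally positive polynomials \dots one could still not derive our results since $\rey$ is not multiplicative''; and the introduction repeats the point that this strategy ``obtains only weak and inadequate versions'' of Theorem~B. Example~\ref{exa:det} gives a concrete obstruction: for $n=4$ and $a=\Xi_1-\Xi_1^{\ti}$, $\det(a)$ is a square in $\R[\ulxi]$ yet $\rey(\det(a))=\det(a)\notin\TPc$. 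So the obstacle you flag as ``the crux'' is not a technicality to be cleaned up but the reason the whole route collapses. Your proposed repair via Procesi--Schacher/Klep does not help: Lemma~\ref{l:rey} only places $\rey(hh^{\ti})$ in $\TP$ \emph{after clearing a central-square denominator} $c^{-2}q$, with a different $c$ for each term, and since $\rey$ is not multiplicative these denominators cannot be coherently absorbed across the products of constraint-weights that make up a preordering element. You would end up outside $\cT$, and there is no visible way to fold the mess into the single ``denominator'' $t_1$ permitted by the Krivine--Stengle format.

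The paper's actual proof goes in the opposite direction, reducing \emph{towards} a commutative scalar ring rather than passing through $\matpoly$. First, Proposition~\ref{p:scal} and Corollary~\ref{c:scal} replace the matrix constraints $S$ by finitely many central (scalar-valued) constraints $S'\subset\cen\cap\cQ^{\tr}_S$ with $K_{S'}=K_S$, using Lemma~\ref{l:dense} together with compactness of $\Sper\R[\ulxi]$ in the constructible topology. One then works in the commutative ring $\cen[a]$ and applies the abstract commutative Krivine--Stengle Positivstellensatz. The decisive technical input is Proposition~\ref{p:ord}, which in turn rests on the extension Theorem~\ref{t:hom} (a homomorphism $\cen\to R$ to a real closed field extends to $\R[\ulxi]\to R$ iff it is nonnegative on $\TPc$), the Cayley--Hamilton theorem for $a$, and Tarski's transfer. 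No matrix Positivstellensatz or Reynolds-averaging is used anywhere in the argument. If you want to salvage your plan you would essentially have to rediscover the central reduction and the extension theorem; as written, the key step fails.
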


See Theorem \ref{t:posss} for the proof, which decomposes into three parts. First we show that the finite set of constraints $S\subset\STR$ can be replaced by a finite set $S'\subset\cen$ (Corollary \ref{c:scal}). To prove this central reduction we use the fact that the positive semidefiniteness of a matrix can be characterized by symmetric polynomials in its eigenvalues and apply  compactness of the real spectrum in the constructible topology \cite[Section 7.1]{BCR}. In the second step we apply results on central simple algebras with involution and techniques from PI theory to prove the following extension theorem.

\begin{thmA}\label{ta:hom}
Let $R\supseteq\R$ be a real closed field. Then an $\R$-algebra homomorphism $\phi:\cen\to R$ extends to an $\R$-algebra homomorphism $\R[\ulxi]\to R$ if and only if $\phi(\tr(hh^{\ti}))\ge0$ for all $h\in\TR$.
\end{thmA}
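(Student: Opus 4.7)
The forward direction is immediate: any extension $\tilde\phi:\R[\ulxi]\to R$ is nothing but evaluation at some tuple $X\in\opm_n(R)^g$, so $\phi(\tr(hh^{\ti}))=\tr(h(X)h(X)^{\ti})=\sum_{i,j}(h(X))_{ij}^2\ge 0$ in the real closed field $R$.

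For the converse, my plan is to combine geometric invariant theory over the algebraic closure $\bar R:=R(\sqrt{-1})$ with Galois descent, using the positivity hypothesis to trivialize the descent obstruction. Since $\cen$ is the ring of $\ort$-invariants in $\R[\ulxi]$ (the pure $\ort$-concomitants, as recalled in the introduction), the homomorphism $\phi$, viewed as a map $\cen\to\bar R$, determines a $\bar R$-point of the categorical quotient $\opm_n^g/\!/\mathrm{O}_n$. Because $\mathrm{O}_n$ is reductive and $\bar R$ algebraically closed, the fiber over this point contains a unique closed $\mathrm{O}_n(\bar R)$-orbit, so one can pick $X\in\opm_n(\bar R)^g$ with $c(X)=\phi(c)$ for every $c\in\cen$.

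Now let $\sigma$ generate $\mathrm{Gal}(\bar R/R)$. Because $\phi$ takes values in $R$, the conjugate $\sigma X$ also lies over $\phi$ and hence in the same closed orbit: $\sigma X=gXg^{-1}$ for some $g\in\mathrm{O}_n(\bar R)$. Applying $\sigma$ once more yields $(\sigma g)g\in\mathrm{Stab}(X)$; after handling the stabilizer (in the generic trivial-stabilizer case this is immediate; in general one must work with the stabilizer group-scheme or pass to a nearby stable point) the class of $g$ defines an element of $H^1(\mathrm{Gal}(\bar R/R),\mathrm{O}_n(\bar R))$, which by Galois descent classifies nondegenerate symmetric bilinear forms of rank $n$ over $R$, so is determined by a signature. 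If the associated form $B_g$ is positive definite, the cocycle is trivial: writing $g=\sigma(h)\,h^{-1}$ with $h\in\mathrm{O}_n(\bar R)$, the matrix tuple $Y:=h^{-1}Xh$ is $\sigma$-invariant, hence lies in $\opm_n(R)^g$, and evaluation at $Y$ provides the required extension $\tilde\phi$.

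The crux of the proof is therefore to show that the hypothesis $\phi(\tr(hh^{\ti}))\ge 0$ for all $h\in\TR$ forces $B_g$ to be positive definite. Evaluating at $X$, the trace form $h\mapsto \tr(hh^{\ti})$ on $\TR$ becomes the quadratic form obtained by twisting the standard inner product on $\opm_n(\bar R)$ by the cocycle $g$, so nonnegativity of $\phi(\tr(hh^{\ti}))$ for every $h\in\TR$ translates exactly to positive semidefiniteness of $B_g$; nondegeneracy of the reduced trace form on a central simple algebra then upgrades this to positive definiteness. Equivalently, through the theory of positive involutions recalled from Procesi–Schacher, a suitable base change of the involution $*$ on $\USA$ along $\phi$ is positive precisely when the stated hypothesis holds, and a positive orthogonal involution on a central simple $R$-algebra of degree $n$ is necessarily conjugate to the transpose on $\opm_n(R)$—which is again the cohomologically trivial case. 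I expect the principal obstacles to be (i) making this dictionary between trace-form positivity and positive definiteness of $B_g$ completely precise, and (ii) carefully controlling nontrivial stabilizers in the GIT reduction, both of which are mild in the generic stable locus but require extra care in general.
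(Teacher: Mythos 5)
The forward direction is fine, but the converse sketch has genuine gaps that are not peripheral---they are exactly where the content of the theorem lives, and the places you flag as requiring ``extra care'' are the whole proof. First, the element $g\in\mathrm{O}_n(\bar R)$ with $\sigma X=gXg^{-1}$ is \emph{not} a $1$-cocycle with values in $\mathrm{O}_n(\bar R)$: it only satisfies $\sigma(g)g\in\mathrm{Stab}(X)$, so it does not directly define a class in $H^1(\mathrm{Gal}(\bar R/R),\mathrm{O}_n)$, and the identification with an $n$-dimensional quadratic form $B_g$ over $R$ is unavailable as stated. This fails precisely when $\mathrm{Stab}(X)$ is nontrivial, which is not a negligible degenerate case but the typical situation at a closed orbit: those $X$ for which the $\mathrm{O}_n$-orbit in $\opm_n(\bar R)^g$ is closed are exactly the tuples generating a semisimple $*$-subalgebra, and the stabilizer is its centralizer intersected with $\mathrm{O}_n$, which is often positive-dimensional (e.g.\ $\mathrm{O}_k\times\mathrm{O}_{n-k}$ or a unitary/symplectic group when the subalgebra has factors of unitary or quaternionic type). ``Passing to a nearby stable point'' is not an option, since $\phi$ is fixed and the rationality obstruction does not vary nicely. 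Second, your proposed dictionary between the hypothesis $\phi(\tr(hh^{\ti}))\ge0$ and positive-definiteness of $B_g$ does not go through as written: in the non-stable case, the evaluation map $\TR\to\opm_n(\bar R)$, $h\mapsto h(X)$, lands in the proper subalgebra generated by $X$, so the values $\phi(\tr(hh^{\ti}))$ constrain the trace form only on that subalgebra (plus its twist by descent data), not on all of $\opm_n(\bar R)$. One must also rule out symplectic blocks in the centralizer with odd multiplicity, which in the paper is exactly what Lemmas~\ref{l:skew0} and~\ref{l:skew} about the Pfaffian-based trace $*$-identity $f_m$ accomplish; that analysis has no counterpart in your sketch.

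For comparison, the paper's proof of Theorem~\ref{t:hom} bypasses GIT and Galois cohomology entirely: it forms the specialization $\cA'=R\otimes_\phi\TR$ (a finite-dimensional $n$-Cayley--Hamilton algebra with involution), kills the radical using \cite[Proposition 3.2]{DPRR}, proves via the orthogonalization Lemma~\ref{l:tech} that the positivity hypothesis makes the induced trace form on $\cA=\cA'/\cJ$ positive semidefinite, then shows the involution splits across the Wedderburn factors and is positive on each, and finally invokes Proposition~\ref{p:posinv}---which uses the $f_m$ lemmas to control quaternionic multiplicities---to produce a trace-preserving $*$-embedding $\cA\hookrightarrow(\opm_n(R),\ti,\tr)$. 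This is a purely algebraic realization of the descent you envision, and the Cayley--Hamilton/trace-identity machinery there is exactly what would be needed to repair items (a)--(c) in your approach. If you want to pursue the geometric route, the cleanest framework is probably not bare $\mathrm{O}_n$-cohomology but the classification of closed orbits over real closed fields in terms of semisimple $*$-subalgebras with positive involution, at which point you would be re-deriving the paper's algebraic statements in a geometric dress.
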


Since $\cen=\R[\ulxi]^{\ort}$, this statement resembles variants of the Procesi-Schwarz theorem \cite[Theorem 0.10]{PS1} (cf. \cite{CKS,Bro}). Nevertheless, it does not seem possible to deduce Theorem \ref{ta:hom} from these classical results; see Appendix \ref{a:ps} for a fuller discussion. Theorem \ref{ta:hom}, proved as Theorem \ref{t:hom} below, is essential for relating evaluations of pure trace polynomials with orderings on $\cen$ via Tarski's transfer principle (Proposition \ref{p:ord}). Finally, by combining the first two steps we obtain a reduction to the commutative situation, where we can apply an existing abstract version of the Krivine-Stengle Positivstellensatz \cite[Theorem 2.5.2]{Mar}.

Since trace polynomials are precisely $\ort$-concomitants in $\matpoly$, one might naively attempt to prove Theorem \ref{ta:ks} by simply applying the Reynolds operator for the $\ort$-action to analogous 
Positivstellens\"atze
for matrix polynomials \cite{Schm1,Cim}. However, the Reynolds operator is not multiplicative and it does not preserve squares of trace polynomials, so in this manner one obtains only weak and inadequate versions of Theorem \ref{ta:ks}.

In Section \ref{s:comp} we refine the strict positivity certificate \ref{it:ks2} of Theorem \ref{ta:ks} in the case of compact semialgebraic sets. We start by introducing {archimedean} cyclic quadratic modules, which encompass an algebraic notion of boundedness. Following the standard definition we say that a cyclic quadratic module $\cQ\subseteq\TR$ is {\bf archimedean} if for every $h\in\TR$ there exists $\rho\in\Q_{>0}$ such that $\rho-hh^{\ti}\in\cQ$. Then we prove Schm\"udgen's Positivstellensatz for trace polynomials.

\begin{thmA}\label{ta:sch}
Let $S\cup\{a\}\subset \STR$ be finite and $\cT$ be the cyclic preordering generated by $S$. If $K_S$ is compact and $a|_{K_S}\succ0$, then $a\in\cT$.
\end{thmA}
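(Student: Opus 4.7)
My plan is to adapt the W\"ormann--Schm\"udgen strategy to the trace-ring setting, with a reduction to a classical commutative Schm\"udgen statement on the center $\cen$ serving as the backbone.

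First, I would apply Corollary \ref{c:scal} (the central reduction established en route to Theorem \ref{ta:ks}) to replace $S\subset\STR$ by a finite set $S'\subset\cen$ defining the same semialgebraic set, so that $K_S=K_{S'}$. The ordinary scalar preordering $T'\subseteq\cen$ generated by $S'$ is then contained in the cyclic preordering $\cT\subseteq\STR$, which reduces the problem to showing that $\cT$ captures strict positivity of symmetric elements on $K_S$.

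Second, I would establish that $\cT$ is archimedean. Since $K_S\subset\mat^g=\R^{gn^2}$ is compact, each pure trace polynomial $c_j:=\tr(\Xi_j\Xi_j^{\ti})=\sum_{\imath,\jmath}\xi_{j\imath\jmath}^2\in\cen$ is bounded on $K_S$. Via Theorem \ref{ta:hom}, the compactness of $K_S$ transports to compactness of the associated subset of $\Sper\cen$, so the classical commutative Schm\"udgen Positivstellensatz applied in $\cen$ yields $N_j-c_j\in T'\subseteq\cT$ for some $N_j\in\N$. For an arbitrary $h\in\TR$, the Cayley--Hamilton theorem applied to $hh^{\ti}$, with characteristic coefficients expressed via Newton's identities in the pure traces $\tr((hh^{\ti})^k)$, combined with the finiteness of $\TR$ as a $\cen$-module (Razmyslov--Procesi) and the cyclicity axiom $\Tr(\cT)\subseteq\cT$, then yields $\rho-hh^{\ti}\in\cT$ for some $\rho\in\Q_{>0}$, proving archimedean-ness of $\cT$.

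Third, with $\cT$ archimedean, I would invoke an abstract representation theorem. Assuming for contradiction $a\notin\cT$, a Hahn--Banach/Eidelheit separation on $\Sym\TR$ produces an $\R$-linear functional $L$ with $L(\cT)\ge 0$, $L(1)=1$, $L(a)\le 0$; archimedean-ness makes $L$ continuous in the natural seminorm $h\mapsto L(hh^{\ti})^{1/2}$. A GNS-type construction exploiting $h\cT h^{\ti}\subseteq\cT$ and $\Tr(\cT)\subseteq\cT$ then yields a $*$-representation of $\TR$ on a finite-dimensional real Hilbert space, corresponding via the extension theorem \ref{ta:hom} to a point $X\in K_S$ with $a(X)\not\succ 0$---contradicting the hypothesis. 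The main obstacle will be the second step: adapting W\"ormann's lemma to the trace ring requires carefully orchestrating Theorem \ref{ta:hom}, the module-finiteness of $\TR$ over $\cen$, and the cyclicity axiom in order to propagate scalar bounds on generators in $\cen$ to bounds on arbitrary $hh^{\ti}\in\TR$; the separation step is more routine but must honor the cyclic trace structure so as to produce a matricial evaluation rather than an infinite-dimensional representation.
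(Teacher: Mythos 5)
Your plan is a W\"ormann-style argument: first establish that the cyclic preordering $\cT$ is archimedean, then invoke a Putinar-type representation theorem. This is a genuinely different route from the paper's, which instead applies the abstract commutative Schm\"udgen theorem (Scheiderer, \cite[Theorem 3.6]{Sch}) directly to the preordering generated by $S'\cup\TPc$ in the commutative ring $\cen[a]$, and then pulls $a\in T\subset\cT$ back out via Proposition~\ref{p:ord}. Notably, the paper \emph{derives} ``$K_S$ compact $\Rightarrow \cT$ archimedean'' as Corollary~\ref{c:sch} from Theorem~\ref{t:sch}, so your proposal inverts the causal order of the paper: you want to prove archimedean-ness first and Schm\"udgen second.

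The serious gap is in your Step~2. You obtain $N_j - \tr(\Xi_j\Xi_j^{\ti})\in\cT$ (bounding pure traces via commutative Schm\"udgen in $\cen$), and you then claim that Cayley--Hamilton plus Newton's identities plus module-finiteness of $\TR$ over $\cen$ plus cyclicity of $\cT$ ``yields $\rho-hh^{\ti}\in\cT$.'' But the direction from ``$\tr(hh^{\ti})$ bounded in $\cT$'' to ``$hh^{\ti}$ bounded in $\cT$'' is exactly the nontrivial point, and the tools you list do not obviously close it. In particular, Proposition~\ref{p:hq} runs in the \emph{opposite} direction (a bound on $hh^{\ti}$ yields a bound on $\tr(h)$, not vice versa). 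The element $\tr(hh^{\ti})\cdot 1 - hh^{\ti}$ is totally positive on $\mat^g$, but by Lemma~\ref{l:rey} and Example~\ref{exa:det} such elements need only lie in $\TP$ up to a central denominator; you would have to show it (or some other certificate of the bound) lies in $\cT$ \emph{without} a denominator. If you want to push this route through, the missing ingredient is the Brumfiel--Schweighofer theorem on integral closure of the ring of bounded elements, applied in the commutative ring $\cen[hh^{\ti}]$ where $hh^{\ti}$ is integral over $\cen$ by Cayley--Hamilton; this is precisely the lemma the paper deploys in its proof of the Putinar theorem (Theorem~\ref{t:arch}, using \cite[Section 6.3]{Bru} and \cite[Theorem 5.3]{Schw}). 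Without citing that tool the step does not go through. Also note that your ``classical Schm\"udgen in $\cen$'' is itself an abstract statement (since $\cen$ is a finitely generated $\R$-algebra, not a polynomial ring), and verifying weak archimedean-ness of the relevant preordering in $\cen$ again requires Proposition~\ref{p:ord} and hence Theorem~\ref{ta:hom} -- which you do invoke, but implicitly.

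Assuming archimedean-ness were in hand, your Step~3 is essentially a re-derivation of the paper's Putinar theorem for trace polynomials (Theorem~\ref{t:arch}), whose proof actually combines a GNS argument \emph{only for the central case} (Lemma~\ref{l:archcen}) with a purely algebraic argument in $\cen[a]$ using Jacobi's representation theorem; the GNS construction on all of $\TR$ does not by itself produce a matrix evaluation, and the reduction to matrices again passes through Theorem~\ref{ta:hom}. Citing Theorem~\ref{t:arch} directly at that point would be cleaner than reproving it. In summary, the W\"ormann route is repairable, but as written Step~2 elides the hardest content (integral closure of bounded elements), and the overall route is longer than the paper's direct application of abstract Schm\"udgen in $\cen[a]$.
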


In the proof (see Theorem \ref{t:sch}) we apply  techniques similar to those in the proof of Theorem \ref{ta:ks}. That is, we replace $S$ by finitely many central constraints and apply Theorem \ref{ta:hom} to reduce to the commutative setting, where we use an abstract version of Schm\"udgen's Positivstellensatz \cite{Sch}.\looseness=-1

Finally, we have the following version
of Putinar's Positivstellensatz for 
$\TR$ and $\GM$, 
combining
 Theorems \ref{t:arch1} and \ref{t:arch}.

\begin{thmA}\label{ta:put}
\mbox\par{}
\begin{enumerate}[label={\rm(\alph*)}]
\item
Let $\cQ\subset\STR$ be an archimedean cyclic quadratic module and $a\in\STR$. If $a|_{K_{\cQ}}\succ0$, then $a\in \cQ$.
\item
Let $\cQ\subset\SGM$ be an archimedean quadratic module and $a\in\SGM$. If $a|_{K_{\cQ}}\succ0$, then $a\in \cQ$.
\end{enumerate}
\end{thmA}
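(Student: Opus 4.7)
The plan is to follow the paradigm of the paper's other Positivstellens\"atze (Theorems~\ref{ta:ks} and~\ref{ta:sch}). For part~(a), the idea is to reduce the matrix-valued problem on $\TR$ to a scalar problem on $\cen$ by replacing the matrix-valued generators of $\cQ$ with pure trace polynomial generators, then use Theorem~\ref{ta:hom} to match orderings on $\cen$ with evaluations at tuples of $n\times n$ matrices over real closed extensions of $\R$, and finally invoke the commutative Putinar Positivstellensatz on $\cen$ \cite{Mar}. Part~(b) requires separate treatment because $\SGM$ is not closed under trace; it will be handled by a Hahn-Banach separation followed by a GNS-style construction exploiting the PI structure of $\GM$.

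For part~(a), let $\cQ\subset\Sym\TR$ be an archimedean cyclic quadratic module and $a\in\Sym\TR$ with $a|_{K_\cQ}\succ 0$. For each symmetric generator $s$ of $\cQ$, the condition $s(X)\succeq 0$ is equivalent to the non-negativity of the coefficients of $\det(tI-s)$, which are pure trace polynomials $e_1(s),\dots,e_n(s)\in\cen$. Let $M\subset\cen$ be the ordinary quadratic module generated by these central witnesses, together with finitely many further central bounds ensuring that $M$ is archimedean (possible since $\cQ$ is archimedean and $\cen\subset\TR$). Under Theorem~\ref{ta:hom} the set $K_M\subseteq\Sper\cen$ corresponds to $K_\cQ$, so $a|_{K_\cQ}\succ 0$ yields strict positivity of $e_1(a),\dots,e_n(a)\in\cen$ on $K_M$. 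Commutative Putinar then places each $e_i(a)\in M\subseteq\cQ$, and a final lifting step using the Cayley-Hamilton identity together with the closure of $\cQ$ under $\TR$-conjugation and traces promotes these central certificates to $a\in\cQ$.

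For part~(b), assume for contradiction $a\notin\cQ$. By Hahn-Banach there is an $\R$-linear functional $L\colon\SGM\to\R$ with $L|_\cQ\ge 0$ and $L(a)\le 0$. Archimedeanness of $\cQ$ provides bounds $\rho-hh^{\ti}\in\cQ$ for every $h\in\GM$, which under $L$ yield $L(hh^{\ti})\le\rho\cdot L(1)$, so $L$ is bounded relative to the pre-Hilbert seminorm $\|h\|^2=L(hh^{\ti})$. A GNS-style construction then produces a $*$-representation $\pi\colon\GM\to B(H)$ with cyclic vector $v$ satisfying $L(q)=\langle\pi(q)v,v\rangle$ and $\pi(s)\succeq 0$ for all $s\in\cQ$. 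Since $\GM$ satisfies all polynomial identities of $n\times n$ matrices, $\pi$ factors through a finite-dimensional representation of degree at most $n$; by Artin-Procesi together with Tarski's transfer principle, $\pi$ is the evaluation at some $X\in K_\cQ$ over a real closed extension of $\R$. But then $a(X)\succ 0$ by hypothesis, so $L(a)=\langle\pi(a)v,v\rangle>0$, contradicting $L(a)\le 0$.

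The main obstacle for part~(a) is the final lifting step: commutative Putinar only certifies the scalar quantities $e_i(a)\in M$, whereas the desired conclusion is the matrix-valued statement $a\in\cQ$. Bridging this gap requires a careful argument combining the Cayley-Hamilton relation $\chi_a(a)=0$ with the hermitian-square and conjugation closure of $\cQ$, in particular using that $\cQ$ is closed under $q\mapsto hqh^{\ti}$ for $h\in\TR$. For part~(b), the main obstacle is showing that the GNS representation is finite-dimensional and realizable as a matrix evaluation, which depends crucially on the interaction of the PI property of $\GM$ with the archimedean bound coming from $\cQ$.
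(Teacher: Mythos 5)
Your proposal for part~(a) and the paper's proof diverge in an essential way, and the gap you flagged yourself is real. You work entirely in $\cen$: you produce pure trace polynomials $e_1(a),\dots,e_n(a)$, get them into a central archimedean quadratic module $M\subset\cen$ by commutative Putinar, and then hope to ``lift'' to $a\in\cQ$ via Cayley--Hamilton. There is no clean way to carry out that lift. From $e_i(a)\in\cQ$ and the Cayley--Hamilton identity $a^n-e_1(a)a^{n-1}+\cdots+(-1)^n e_n(a)=0$ one does not obtain a weighted sum-of-hermitian-squares certificate for $a$: the identity is a linear relation among powers of $a$, not a positivity certificate, and the closure properties of $\cQ$ (sums, conjugation $h\,\cdot\,h^{\ti}$, trace) do not produce $a$ from the $e_i(a)$ in general. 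The paper sidesteps this entirely by working in the \emph{larger} commutative ring $\cen[a]$ rather than $\cen$: it shows $\sigma_j-\varepsilon\in\cQ$ (this is Lemma~\ref{l:archcen}, which itself requires the GNS machinery and Theorem~\ref{ta:hom}), adjoins finitely many archimedean bounds on the generators of $\cen$, forms a quadratic module $Q\subset\cen[a]$, verifies $Q$ is archimedean (using that the ring of bounded elements $H_Q$ is integrally closed), and then applies Jacobi's representation theorem to get $a\in Q\subset\cQ$ directly. Because $a$ lives in the commutative ring being used, no lifting is needed. Your route would have to either reproduce that idea or invent a genuinely new bridge, neither of which appears in the sketch.

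Your proposal for part~(b) is structurally close to the paper's Theorem~\ref{t:arch1} (separation, GNS, PI-theoretic reduction to $\mat$), but it omits a step the paper treats as load-bearing: the \emph{extreme} separation. The paper does not use an arbitrary Hahn--Banach functional; after the initial separation it applies Banach--Alaoglu and Krein--Milman to replace $L$ by an extreme point of the state-like set $\cC$. That extremality is exactly what forces the GNS representation $\pi$ to be irreducible (Step~4), and irreducibility is what makes $\pi(\GM)$ a \emph{prime} PI ring so that the $*$-version of Posner's theorem applies and gives a central simple algebra of bounded degree with $*$-center $\R$, which then $*$-embeds into $(\mat,\ti)$. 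With only a generic separating functional, the GNS space $H$ and the image $\pi(\GM)$ can be infinite-dimensional and the ``factors through a finite-dimensional representation of degree at most $n$'' claim is unjustified; the PI hypothesis alone does not force finite-dimensionality of a non-irreducible $*$-representation. Your appeal to ``Artin--Procesi together with Tarski's transfer principle'' is also vaguer than what is needed: the paper explicitly invokes Rowen's $*$-version of Posner's theorem, the Procesi--Schacher classification of positive involutions over $\R$, and its own Proposition~\ref{p:ts} and Remark~\ref{r:ts} to rule out the wrong involution types and land inside $(\mat,\ti)$ over $\R$ itself, producing an honest $X\in K_\cQ\subset\mat^g$ rather than a point over an abstract real closed extension.
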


Theorem \ref{ta:put} is proved in a more functional-analytic way. We start by assuming $a\notin \cQ$ and  find an extreme separation of $a$ and $\cQ$. Then we apply a Gelfand-Naimark-Segal construction towards finding a tuple of $n\times n$ matrices in $K_{\cQ}$ at which $a$ is not positive definite. For $\GM$ this is done using polynomial identities techniques, while for $\TR$ we use Theorem \ref{ta:hom}. As a consequence we have the following statement for noncommutative polynomials.

\begin{cora}\label{ca:freeput}
Let $\cQ\subset\Sym\R\mx$ be an archimedean quadratic module and $a\in\R\mx$. If $a|_{K_{\cQ}}\succ0$, then $a=q+f$ for some $q\in \cQ$ and $f\in\R\mx$ satisfying $f|_{\mat^g}=0$.
\end{cora}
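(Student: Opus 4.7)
The plan is to deduce the corollary directly from Theorem~\ref{ta:put}(b) by transferring everything along the canonical surjective $*$-algebra homomorphism $\pi\colon\R\mx\twoheadrightarrow\GM$ defined by $x_j\mapsto\Xi_j$ and $x_j^*\mapsto\Xi_j^{\ti}$, whose kernel is precisely the ideal $\{h\in\R\mx:h|_{\mat^g}=0\}$ of polynomial $*$-identities for $n\times n$ matrices. The images $\bar\cQ:=\pi(\cQ)\subseteq\SGM$ and $\bar a:=\pi(a)$ inherit all the hypotheses of Theorem~\ref{ta:put}(b), and its conclusion lifts back to the asserted decomposition $a=q+f$.

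More concretely, I would first verify that $\bar\cQ$ is an archimedean quadratic module in $\SGM$. The quadratic-module axioms are immediate since $\pi$ is a $*$-epimorphism: for the closure condition $\bar h\bar\cQ\bar h^*\subseteq\bar\cQ$ one simply lifts $\bar h\in\GM$ to some $h\in\R\mx$ and applies $\pi$ to $h\cQ h^*\subseteq\cQ$. Archimedeanity transfers in the same way: for any $\bar h=\pi(h)$, pick $\rho\in\Q_{>0}$ with $\rho-hh^*\in\cQ$, then $\rho-\bar h\bar h^*=\pi(\rho-hh^*)\in\bar\cQ$. Since every evaluation at a tuple $X\in\mat^g$ factors as $\R\mx\xrightarrow{\pi}\GM\to\mat$, the semialgebraic sets $K_\cQ$ and $K_{\bar\cQ}$ coincide as subsets of $\mat^g$ and $\bar a(X)=a(X)$ for all such $X$, so $\bar a|_{K_{\bar\cQ}}=a|_{K_\cQ}\succ 0$.

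Next I would apply Theorem~\ref{ta:put}(b) to $\bar a$ and $\bar\cQ$. To invoke it one needs $\bar a\in\SGM$; this follows because a positive definite value is symmetric, so $a-a^*$ vanishes on $K_\cQ$ and $\pi(a)$ lies in $\SGM$ in all cases of interest (alternatively, one may reduce to the symmetric case by replacing $a$ with $\tfrac12(a+a^*)$, which has the same restriction to $K_\cQ$). The theorem then yields $\bar a\in\bar\cQ$, so some $q\in\cQ$ satisfies $\pi(q)=\bar a$. Setting $f:=a-q\in\ker\pi$ gives $f|_{\mat^g}=0$, and $a=q+f$ is the required decomposition.

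The entire substantive content of the corollary lives inside Theorem~\ref{ta:put}(b); no new ingredient is required here. The only point that needs a line of care is the preservation of archimedeanity (immediate via lifting) and the implicit symmetry of $\pi(a)$ (a routine cosmetic reduction). I do not expect any genuine obstacle in the argument.
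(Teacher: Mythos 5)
Your argument is essentially the paper's own proof: push everything forward along the canonical $*$-epimorphism $\pi:\R\mx\twoheadrightarrow\GM$, check that $\pi(\cQ)$ remains an archimedean quadratic module and that the positivity hypothesis transfers (since evaluations at $\mat^g$ factor through $\pi$), apply Theorem~\ref{ta:put}(b) to conclude $\pi(a)\in\pi(\cQ)$, and then lift back using the fact that $\ker\pi$ is exactly the ideal of polynomial $*$-identities of $n\times n$ matrices. Your extra remark about ensuring $\pi(a)\in\SGM$ is a reasonable bit of housekeeping the paper leaves implicit; note though that your first justification (``$a-a^*$ vanishes on $K_\cQ$'') does not actually force $\pi(a)$ to be symmetric, so if one really wants to allow non-symmetric $a$ the clean fix is the replacement $a\mapsto\tfrac12(a+a^*)$ you also propose, together with the observation that the conclusion for the symmetric part already yields the stated decomposition when $a$ is symmetric (which is the intended reading of $a|_{K_\cQ}\succ0$).
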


\begin{proof}
If $\pi:\R\mx\to\GM$ is the canonical $*$-homomorphism, then $\pi(\cQ)$ is an archimedean module in $\GM$ and hence $\pi(a)\in\pi(\cQ)$ by Theorem \ref{ta:put}(b). Corollary \ref{ca:freeput} now follows because the kernel of $\pi$ consists precisely of the polynomial identities for $n\times n$ matrices.
\end{proof}

The paper concludes with Section \ref{s:exa} containing examples and counterexamples. In Appendix \ref{a:rey} where we present algebraic constructions of the Reynolds operator for the action of 
$\ort$ on polynomials and matrix polynomials as alternatives to the integration over the orthogonal group, which is of interest in mathematical physics \cite{CS}. Appendix \ref{a:ps} explains  why the Procesi-Schwarz theorem cannot be used to obtain the extension theorem \ref{ta:hom}.

\subsection*{Acknowledgments}
The authors thank Jean-Pierre Tignol for sharing his expertise and generously allowing us to include his ideas that led to the counterexample for the universal Procesi-Schacher conjecture, and James Pascoe for his thoughtful suggestions. We also acknowledge fruitful Oberwolfach discussions with Cordian Riener and Markus Schweighofer.


\section{Preliminaries}\label{s:prelim}

In this section we collect some background material and preliminary results needed in the sequel.

\subsection{Polynomial and trace \texorpdfstring{$*$}{*}-identities}

Throughout the paper let $F$ be a field of characteristic $0$. Let $\ulx=\{x_1,\dots,x_g\}$ and $\ulx^*=\{x_1^*,\dots,x_g^*\}$ be freely noncommuting variables, and let $\mx$ be the free monoid generated by $x_j,x_j^*$. The free algebra $\px$ is then endowed with the unique involution of the first kind determined by $x_j\mapsto x_j^*$. If $\cA$ is an $F$-algebra with involution $\tau$ and $f=f(x_1,\dots,x_g,x_1^*,\dots,x_g^*)\in\px$ is such that
$$f(a_1,\dots,a_g,a_1^\tau,\dots,a_g^\tau)=0$$
for all $a_j\in\cA$, then $f$ is a {\bf polynomial $*$-identity} of $(\cA,\tau)$.

Let $\sim$ be the equivalence relation on $\mx$ generated by
$$w_1w_2\sim w_2w_1,\qquad w_1\sim w_1^*$$
for $w_1,w_2\in\mx$. Let $\Tr(w)$ be the equivalence class for $w\in\mx$. Then we define the {\bf free trace ring with involution} $\free=T\otimes_F \px$, where $T$ is the free commutative $F$-algebra generated by $\Tr(w)$ for $w\in\mx\!/\!\!\sim$. Note that $\Tr(1)\in \free$ is one of the generators of $T$ and not a real scalar. If $\cA$ is an $F$-algebra, then an $F$-linear map $\chi:\cA\to F$ satisfying $\chi(ab)=\chi(ba)$ for $a,b\in\cA$ is called an {\bf $F$-trace} on $\cA$. If
$$f=\sum_i \alpha_i\Tr(w_{i1})\cdots\Tr(w_{i\ell_i})w_{i0},\qquad \alpha_i\in F, \ w_{ij}\in\mx$$
satisfies
$$\sum_i \alpha_i\chi(w_{i1}(a))\cdots\chi(w_{i\ell_i}(a))w_{i0}(a)=0$$
for every tuple $a$ of elements in $\cA$, then $f$  is a {\bf trace $*$-identity} of $(\cA,\tau,\chi)$.

\subsubsection{A particular trace \texorpdfstring{$*$}{*}-identity}

For $n\in\N$ let $\ti$ denote the transpose involution on $\opm_n(F)$ and let $\si$ denote the symplectic involution on $\opm_{2n}(F)$:
$$\begin{pmatrix} a & b \\ c & d\end{pmatrix}^{\si}=\begin{pmatrix} d^{\ti} & -b^{\ti} \\ -c^{\ti} & a^{\ti}\end{pmatrix}.$$
Let $\tr:\opm_n(F)\to F$ be the usual trace. Finally, for $X\in\opm_n(F)$ let $X^{\oplus d}\in\opm_{dn}(F)$ denote the block-diagonal matrix with $d$ diagonal blocks all equal to $X$.

Fix $m\in\N$. For a $\ti$-antisymmetric $A\in\opm_{2m}(F)$ let $\pf(A)\in F$ be its Pfaffian, $\pf(A)^2=\det(A)$. Suppose that $A_1,A_2\in\opm_{2m}(F)$ are $\ti$-antisymmetric and $A_2$ is invertible. Now consider
$$f=\pf(A_2)\pf(tA_2^{-1}-A_1)\in F[t].$$
Then $f^2$ is the characteristic polynomial of $A_1A_2$, so $\pm f$ is monic of degree $m$ and the coefficients of $f$ are polynomials in the entries of $A_1,A_2$ by Gauss' lemma. Also, as in the proof of the Cayley-Hamilton theorem we see that $f(A_1A_2)=0$.

Hence for every $\ti$-antisymmetric $A_1,A_2\in\opm_{2m}(F)$ there exists $f=t^m+\sum_k (-1)^k c_k t^{m-k}\in F[t]$ such that $f(A_1A_2)=0$. If $A_1A_2$ has distinct eigenvalues $\lambda_1,\dots,\lambda_m$, then their blocks in the Jordan decomposition of $A_1A_2$ have multiplicity $2$ and
$$2\left(\sum_{j=1}^m\lambda_j^i\right)=\tr\left((A_1A_2)^i\right)$$
for $i\in\N$. Now Newton's identities imply
$$kc_k=\sum_{i=1}^k \frac12 (-1)^{i-1}\tr\left((A_1A_2)^k\right) c_{k-i}$$
for $1\le k\le m$ and $c_0=1$.

Now define $f_m\in\free$ as
$$f_m=\sum_{k=0}^m (-1)^k f'_k\cdot(x_1x_2)^{m-k}$$
with $f'_0=1$ and
\begin{equation}\label{e:rec}
f'_k=\sum_{i=1}^k \frac{1}{2k} (-1)^{i-1}\tr\left((x_1x_2)^k\right) f'_{k-i}
\end{equation}
for $1\le k\le m$. The following lemmas will be important for distinguishing between different types of involutions of the first kind in the sequel.

\begin{lem}\label{l:skew0}
For every $m\in\N$, $f_m(x_1-x_1^*,x_2-x_2^*)$ is a $*$-trace identity of $(\opm_{2m}(F),\ti,\tr)$.
\end{lem}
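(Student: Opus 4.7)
The plan is to reduce the assertion to a generic (Zariski-density) computation, with the substantive work already carried out in the paragraphs preceding the statement.

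First I would rephrase: the substitution $x_j\mapsto X_j$, $x_j^*\mapsto X_j^{\ti}$ turns $f_m(x_1-x_1^*,x_2-x_2^*)$ into $f_m(A_1,A_2)$ with $A_j=X_j-X_j^{\ti}$, and since any $\ti$-antisymmetric matrix has this form (take $X_j=\tfrac12 A_j$), the claim becomes $f_m(A_1,A_2)=0$ for every pair of $\ti$-antisymmetric $A_1,A_2\in\opm_{2m}(F)$. Let $V$ denote the affine $F$-space of such pairs. I would then observe that the scalars $f'_k(A_1,A_2)$ are polynomial (not merely rational) in the entries of $A_1,A_2$: each step of the recursion \eqref{e:rec} involves a division by $2k$, which stays inside $F$ since $\kar F=0$. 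Hence the matrix $f_m(A_1,A_2)$ has entries that are polynomial functions on $V$.

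Next I would pass to $\bar F$ and isolate the Zariski-open $U\subseteq V_{\bar F}$ cut out by $\det(A_2)\neq 0$ together with the nonvanishing of the discriminant of the characteristic polynomial of $A_1A_2$, i.e.\ the locus where $A_1A_2$ has $m$ distinct eigenvalues (each automatically of multiplicity two, by the Pfaffian identity recalled in the preamble). To see that $U$ is nonempty, and hence Zariski dense in $V_{\bar F}$, it suffices to exhibit one point: take $A_2=\diag(J,\dots,J)$ with $J=\bigl(\begin{smallmatrix}0&1\\-1&0\end{smallmatrix}\bigr)$ and $A_1=\diag(a_1 J,\dots,a_m J)$ for any distinct $a_1,\dots,a_m\in\bar F$; then $A_1A_2=\diag(-a_1 I_2,\dots,-a_m I_2)$ has the required spectrum.

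On $U$, the material preceding the lemma does the work: the polynomial $f=\pf(A_2)\pf(tA_2^{-1}-A_1)\in\bar F[t]$ is a square root of the characteristic polynomial of $A_1A_2$, so it annihilates $A_1A_2$; distinctness of eigenvalues together with Newton's identities (used with $\sum_{j=1}^m\lambda_j^i=\tfrac12\tr((A_1A_2)^i)$, owing to the doubled multiplicities) identify the coefficients of $f$ with the scalars $(-1)^k f'_k(A_1,A_2)$, so $f_m(A_1,A_2)=f(A_1A_2)=0$ pointwise on $U$. Zariski density of $U$ in $V_{\bar F}$ then propagates the identity to all of $V_{\bar F}$, and in particular to $V(F)$.

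The only real obstacle I foresee is purely notational bookkeeping: verifying cleanly that the recursion produces coefficients that are honestly polynomial (not just rational) in the matrix entries, and that the discriminant locus is cut out by a single polynomial so that nonemptiness of $U$ implies density. Both points are routine consequences of $\kar F=0$, and the Zariski density step then closes the argument.
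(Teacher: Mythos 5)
Your approach is essentially the paper's: observe that the entries of $f_m(A_1,A_2)$ are polynomial in the entries of the antisymmetric pair $(A_1,A_2)$, verify vanishing on the Zariski-dense locus where $A_1A_2$ has $m$ distinct eigenvalues, and conclude vanishing everywhere. The paper states this in two lines; you supply the missing detail, in particular an explicit witness for nonemptiness, which is a welcome addition.

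One genuine slip: the open set $U$ as you define it is empty. The characteristic polynomial $\chi_{A_1A_2}$ has degree $2m$ and is identically equal to $f^2$, so every root has multiplicity $\ge 2$ and the discriminant of $\chi_{A_1A_2}$ vanishes identically on $V$. The condition ``$A_1A_2$ has $m$ distinct eigenvalues'' must instead be encoded by the nonvanishing of the discriminant of the degree-$m$ polynomial $f$ (the Pfaffian square root); this \emph{is} a polynomial function on $V$ because the coefficients of $f$ are polynomial in the matrix entries, by the very Gauss-lemma argument you cite. With that correction your explicit pair $A_2=\diag(J,\dots,J)$, $A_1=\diag(a_1J,\dots,a_mJ)$ shows $U\neq\emptyset$ (here $f(t)=\prod_j(t+a_j)$ up to sign, with distinct roots), and the rest of the density argument goes through.
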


\begin{proof}
Observe that the set of pairs of $\ti$-antisymmetric $A_1,A_2\in\opm_{2m}(F)$, such that $A_1A_2$ has $m$ distinct eigenvalues, is Zariski dense in the set of all pairs of $\ti$-antisymmetric $A_1,A_2\in\opm_{2m}(F)$. Hence the conclusion follows by the construction of $f_m$.
\end{proof}

\begin{lem}\label{l:skew}
For every $n,m\in\N$ and $d\in \N\setminus 2\N$ there exist $\si$-antisymmetric $A_1,A_2\in\opm_{2n}(F)$ such that
$$f_m\left(A_1^{\oplus d}A_2^{\oplus d}\right)\neq0.$$
\end{lem}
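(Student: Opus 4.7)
The plan is to construct explicit $\si$-antisymmetric matrices $A_1,A_2\in\opm_{2n}(F)$ for which $f_m$ evaluated at $(A_1^{\oplus d},A_2^{\oplus d})$ admits a closed form that is manifestly nonzero for odd $d$. Mimicking the $n=1$ picture (where $\si$-antisymmetric means traceless), place a single symplectic block on the pair of indices $(1,n+1)$:
\begin{equation*}
A_1=E_{11}-E_{n+1,n+1},\qquad A_2=E_{1,n+1}-E_{n+1,1}.
\end{equation*}
First I would verify both are $\si$-antisymmetric: $A_1$ is diagonal with $(A_1)_{ii}+(A_1)_{n+i,n+i}=0$, while $A_2^{\ti}=-A_2$ and $A_2$ commutes with $J_{2n}$ (both $A_2J_{2n}$ and $J_{2n}A_2$ equal $-E_{11}-E_{n+1,n+1}$), so $A_2^{\si}=J_{2n}^{-1}A_2^{\ti}J_{2n}=-A_2$.

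Then analyse $Y:=A_1A_2=E_{1,n+1}+E_{n+1,1}$. Since $Y$ preserves $\mathrm{span}(e_1,e_{n+1})$ and acts there as $\bigl(\begin{smallmatrix}0&1\\1&0\end{smallmatrix}\bigr)$ while annihilating the complement, $Y$ is semisimple with eigenvalues in $\{-1,0,1\}$ and $\tr(Y^i)=1+(-1)^i$ for $i\ge 1$. Hence $\tr\bigl((A_1^{\oplus d}A_2^{\oplus d})^i\bigr)=d(1+(-1)^i)$, vanishing for odd $i$. Feeding these traces into \eqref{e:rec} (Newton's identities for the halved power sums), a short induction shows $f'_k\bigl((A_1A_2)^{\oplus d}\bigr)=0$ for every odd $k$, while the even-indexed values $C_j:=f'_{2j}\bigl((A_1A_2)^{\oplus d}\bigr)$ satisfy
\begin{equation*}
2jC_j=-d\bigl(C_0+C_1+\cdots+C_{j-1}\bigr)=-dS_{j-1},\qquad C_0=1,
\end{equation*}
where $S_j:=C_0+\cdots+C_j$. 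The recursion $S_j=S_{j-1}(2j-d)/(2j)$ telescopes to
\begin{equation*}
S_j=\frac{(2-d)(4-d)\cdots(2j-d)}{(2j)!!},
\end{equation*}
which is nonzero for every $j\ge 0$ exactly because $d$ is odd (so each factor $2k-d$ is odd, hence nonzero).

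Finally, $f_m(A_1^{\oplus d},A_2^{\oplus d})=p(A_1A_2)^{\oplus d}$, where $p(t):=\sum_{k=0}^m(-1)^kf'_k\bigl((A_1A_2)^{\oplus d}\bigr)\,t^{m-k}$. Only the even-$k$ terms survive, so
\begin{equation*}
p(1)=\sum_{j=0}^{\lfloor m/2\rfloor}C_j=S_{\lfloor m/2\rfloor}\neq0.
\end{equation*}
Since $1$ is an eigenvalue of $Y$ (with eigenvector $e_1+e_{n+1}$), this forces $p(Y)\neq0$ and hence $f_m\bigl(A_1^{\oplus d}A_2^{\oplus d}\bigr)\neq 0$, as required.

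The only real obstacle is the algebraic bookkeeping producing the closed form for $S_j$; everything else is direct computation. The role of the hypothesis $d\in\N\setminus 2\N$ is then transparent: it is precisely what prevents any factor $2k-d$ from vanishing, reflecting the invariant-theoretic gap between orthogonal and symplectic involutions that Lemma \ref{l:skew} is designed to witness.
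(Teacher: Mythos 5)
Your proof is correct, but it takes a genuinely different route from the paper's. The paper observes that every $\ti$-symmetric $S\in\opm_{2n}(F)$ factors as a product of two $\si$-antisymmetric matrices, takes $S=\diag(1,\dots,1,0)$ so that $\tr\big((S^{\oplus d})^i\big)=d(2n-1)$ is odd for all $i\ge1$, and then runs a $2$-adic valuation argument: by induction on the recurrence one shows $2^k\,k!\,f'_k(S^{\oplus d})$ is an odd integer, so $f'_m(S^{\oplus d})\neq0$; the conclusion $f_m(S^{\oplus d})\neq0$ follows by acting on a kernel vector of the singular matrix $S^{\oplus d}$. You instead pick $A_1,A_2$ so that $Y=A_1A_2$ has the parity-graded spectrum $\{1,-1,0,\dots,0\}$, which forces $\tr(Y^i)\in\{0,2\}$ according to the parity of $i$, kills every odd-index $f'_k$, and collapses the recurrence to the telescoping product $S_j=\prod_{k=1}^{j}\frac{2k-d}{2k}$; nonvanishing is then read off from $p(1)=S_{\lfloor m/2\rfloor}\neq0$ at the eigenvalue $1$ of $Y$. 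Both arguments make the role of $d\in\N\setminus2\N$ fully transparent (the paper through the parity of $d(2n-1)$, you through the nonvanishing odd factors $2k-d$), and both are elementary; yours has the modest advantage of an explicit closed form in place of a valuation estimate, and avoids invoking singularity of $Y$ by testing directly against the eigenvalue $1$. One small remark: the recurrence \eqref{e:rec} in the paper contains a typo --- the trace should be $\tr\big((x_1x_2)^i\big)$, with the summation index $i$, matching the Newton identity stated just above it --- and your computation correctly uses the intended form.
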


\begin{proof}
Every $\ti$-symmetric matrix $S\in\opm_{2n}(F)$ can be written as $S=(-SJ)J$, where $J=\left(\begin{smallmatrix}0 & 1 \\ -1 & 0\end{smallmatrix}\right)$ and $-SJ,J$ are $\si$-antisymmetric matrices. Hence it suffices to prove that $f_m(S^{\oplus d})\neq0$ holds for
$$S=\diag(\overbrace{1,\dots,1}^{2n-1},0)\in\opm_{2n}(F).$$
Since $\tr((S^{\oplus d})^k)=d(2n-1)$ is odd, we can use \eqref{e:rec} and induction on $k$ to show that
$$k!f'_k(S^{\oplus d})\in
\left\{\frac{\ell}{2^k}\colon \ell\in\Z\right\}\setminus\left\{\frac{\ell}{2^{k-1}}\colon \ell\in\Z\right\}$$
for $1\le k\le m$. In particular we have $f'_m(S^{\oplus d})\neq0$ and thus $f_m(S^{\oplus d})\neq0$.
\end{proof}

For $n\in\N$ and $m\in2\N$ let $\JJ(n,\ti)$ denote the set of polynomial $*$-identities of $(\opm_n(F),\ti)$ and let $\JJ(m,\si)$ denote the set of polynomial $*$-identities of $(\opm_m(F),\si)$. By \cite[Corollary 2.5.12 and Remark 2.5.13]{Row2} we have $\JJ(m,\si)\subseteq \JJ(n,\ti)$ if and only if $2n\le m$.

\begin{prop}\label{p:ts}
Let $n\in\N$ and $m\in2\N$. Then $\JJ(n,\ti)\subseteq \JJ(m,\si)$ if and only if $2m\le n$.
\end{prop}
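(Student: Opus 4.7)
My plan splits by direction. For $(\Leftarrow)$, assuming $2m\le n$, I would construct a unital $*$-embedding $(\opm_m(F),\si)\hookrightarrow(\opm_n(F),\ti)$. The key observation is that the tensor of two symplectic involutions is orthogonal, so $(\opm_m(F),\si)\otimes_F(\opm_2(F),\si)\cong(\opm_{2m}(F),\ti')$ for some orthogonal involution $\ti'$; the map $X\mapsto X\otimes I_2$ is a unital $*$-embedding. Since polynomial $*$-identities of a CSA with involution in characteristic zero depend only on type and degree, $\JJ(\opm_{2m}(F),\ti')=\JJ(2m,\ti)$, and a block-diagonal inclusion $X\mapsto\diag(X,0_{n-2m})$ supplies a $*$-homomorphism $(\opm_{2m}(F),\ti)\to(\opm_n(F),\ti)$, under which polynomial $*$-identities (having no constant term, as they must vanish at the zero substitution) restrict to identities of the image. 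Stringing these together yields $\JJ(n,\ti)\subseteq\JJ(2m,\ti)\subseteq\JJ(m,\si)$.

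For $(\Rightarrow)$, I would argue contrapositively: assuming $2m>n$, I exhibit an element of $\JJ(n,\ti)\setminus\JJ(m,\si)$. When $m>n$, the Amitsur--Levitzki polynomial $s_{2n}(x_1,\dots,x_{2n})$ suffices, since it lies in $\JJ(n,\ti)$ as a plain polynomial identity of $\opm_n$ but is not satisfied by $\opm_m$ for $m>n$. In the delicate remaining range $n/2<m\le n$, I would invoke Lemmas \ref{l:skew0} and \ref{l:skew}. Setting $k=\lceil n/2\rceil$, Lemma \ref{l:skew0} (together with a padding argument $X\mapsto\diag(X,0)$ for $n$ odd, which reduces the odd-size case to the $2k\times 2k$ result) shows that $f_k\bigl((x-x^*)(y-y^*)\bigr)$ is a $*$-trace identity of $(\opm_n(F),\ti)$. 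Since $m\in 2\N$, decomposing $m=2n'd$ with $d$ odd, Lemma \ref{l:skew} produces $\si$-antisymmetric matrices in $\opm_m(F)$ at which $f_k$ does not vanish, so $f_k$ is not a $*$-trace identity of $(\opm_m(F),\si)$.

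The main obstacle will be converting this trace identity into a genuine polynomial $*$-identity (an element of $\px$) while preserving its failure on $(\opm_m,\si)$. My plan is to exploit the relation $f_k(W)=0$ for $W=(x-x^*)(y-y^*)$, which reads $W^k=\sum_{i=1}^k(-1)^{i+1}c_i(W)W^{k-i}$ with $c_i$ rational in $\tr(W),\dots,\tr(W^i)$, in order to absorb each trace coefficient via a Razmyslov--Procesi style elimination: a factor $\tr(W^j)\cdot Y$ is rewritten as a polynomial expression in $W$ and $Y$ by multiplying the relation by appropriate powers of $W$ and by a fresh auxiliary variable, using $\tr(W^j)W=(\text{polynomial in }W)$ that follows from $f_k$ and Newton's identities. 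After finitely many such steps one obtains an element $p\in\px$ in $x,y$ and auxiliary variables $z_1,\dots,z_r$ which lies in $\JJ(n,\ti)$; evaluating $p$ at the Lemma \ref{l:skew} witness together with generic $z_i\in\opm_m(F)$ gives a nonvanishing value, so $p\notin\JJ(m,\si)$. The combinatorial bookkeeping required for this absorption, and the verification that genericity of the $z_i$ preserves the distinguishing witness, are the main technical delicacies.
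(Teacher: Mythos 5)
Your $(\Leftarrow)$ direction is essentially correct and differs from the paper only cosmetically: you realize the $*$-embedding $(\opm_m(F),\si)\hookrightarrow(\opm_{2m}(F),\ti)$ by tensoring with $(\opm_2(F),\si)$, whereas the paper writes it out in matrix coordinates. One thing you should state explicitly: both arguments need the reduction to algebraically closed $F$ (the paper cites \cite[Corollary 2.3.32]{Row2}), since over a non-closed field not all orthogonal involutions on $\opm_{2m}(F)$ are conjugate, so ``$\JJ(\opm_{2m}(F),\ti')=\JJ(2m,\ti)$'' is a fact about $*$-identities being preserved under scalar extension rather than a verbatim isomorphism statement.

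The $(\Rightarrow)$ direction has a genuine gap. The paper avoids the trace-to-polynomial conversion entirely by using the Capelli polynomial $c_m$, a bona fide element of $\px$ with no traces, which detects linear dependence of $\{W,W^2,\dots,W^m\}$ for $W=(x_1-x_1^*)(x_2-x_2^*)$: Lemma \ref{l:skew0} (with the padding trick for odd $n$) shows these powers are always dependent in $(\opm_n(F),\ti)$ once $\lfloor n/2\rfloor+1\le m$, i.e.\ $n<2m$, while on the $\si$-side a single product of two $\si$-antisymmetric matrices with distinct eigenvalues makes them independent, so the Capelli substitution is a $*$-identity for $(\opm_n,\ti)$ and not for $(\opm_m,\si)$. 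This handles all of $n<2m$ uniformly; your case split ($m>n$ via Amitsur--Levitzki, then $n/2<m\le n$ separately) is unnecessary.

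In your hard range $n/2<m\le n$, the trace-elimination plan is not a proof and rests on a false step. You assert that ``$\tr(W^j)W=(\text{polynomial in }W)$ follows from $f_k$ and Newton's identities,'' but there is no such identity: Newton's identities convert between power sums and elementary symmetric functions of eigenvalues, they do not remove traces, and $f_k$ is itself a relation whose coefficients live in the trace ring, not in $F$. The genuine Razmyslov--Procesi substitution does express $\tr(p)\cdot q$ as a trace-free polynomial in $p,q$ and auxiliary variables \emph{modulo the $T$-ideal of $\opm_n$-identities}, so membership in $\JJ(n,\ti)$ comes nearly for free; what does not come for free, and what your sketch explicitly defers, is that after the cascade of substitutions the resulting polynomial still fails on a $\si$-antisymmetric witness in $\opm_m(F)$. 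That verification is not mere ``combinatorial bookkeeping'' --- it is the entire content of the direction, and it is precisely what the Capelli polynomial is engineered to deliver for free.
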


\begin{proof}
$(\Rightarrow)$ Let
$$c_m=\sum_{\pi\in \Sym_m} \sgn{\pi}
x_{\pi(1)}x_{m+1}x_{\pi(2)}x_{m+2}\cdots x_{2m-1}x_{\pi(m)}$$
be the $m$th Capelli polynomial \cite[Section 1.2]{Row2}. If $\cA$ is a central simple $F$-algebra and $a_1,\dots,a_m\in\cA$, then $\{a_1,\dots,a_m\}$ is linearly dependent over $F$ if and only if
$$c_g(a_1,\dots,a_g,b_1,\dots,b_{m-1})=0 \qquad \forall b_i\in\cA$$
by \cite[Theorem 1.4.34]{Row2}.

Now assume $n<2m$. If $A_1,A_2\in\opm_n(F)$ are $\ti$-antisymmetric, then the set
$$\left\{A_1A_2,\dots,(A_1A_2)^{\lfloor n/2\rfloor+1}\right\}$$
is linearly dependent. Indeed, for an even $n$ this holds directly by Lemma \ref{l:skew0}, while for an odd $n$ we use the fact that $A_1A_2$ is singular and then apply Lemma \ref{l:skew0} for $n+1$. On the other hand, since every $\ti$-symmetric matrix in $\opm_m(F)$ is a product of two $\si$-antisymmetric matrices, there exist $\si$-antisymmetric $A_1,A_2\in\opm_m(F)$ such that $\{1,\dots,(A_1A_2)^{m-1}\}$ is linearly independent. Since $\lfloor \frac{n}{2}\rfloor+1\le m$,
\begin{align*}
c_m\Big((x_1-x_1^*)(x_2-x_2^*),\dots,((x_1-x_1^*)(x_2-x_2^*))^m,x_3,\dots,x_{m+1}\Big)
\end{align*}
is a $*$-identity of $\opm_n(F)$ endowed with $\ti$ but is not a $*$-identity of $\opm_m(F)$ endowed with $\si$.

$(\Leftarrow)$ By \cite[Corollary 2.3.32]{Row2} we can assume that $F$ is algebraically closed; let $i\in F$ be such that $i^2=-1$. Since $m\in2\N$, $(\opm_m(F),\si)$ $*$-embeds into $(\opm_{2m}(F),\ti)$ via
\[(\opm_m(F),\si)\hookrightarrow (\opm_{2m}(F),\ti),\qquad
\begin{pmatrix}a & b \\ c & d\end{pmatrix}\mapsto
\frac12\begin{pmatrix}
a+d & i(a-d) & c-b & i(b+c)\\
i(d-a) & a+d & i(b+c) & b-c\\
b-c & -i(b+c) & a+d & i(a-d)\\
-i(b+c) & c-b & i(d-a) & a+d 
\end{pmatrix}.
\]
\end{proof}

\begin{rem}\label{r:ts}
The same reasoning as in the proof of Proposition \ref{p:ts} also implies that elements of $\JJ(n,\ti)$ are  polynomial *-identities of $\opm_m(F)$ with an involution of the second kind if and only if $2m\le n$. Recall that an involution on $\opm_m(F)$ is of the second kind if 
it induces an automorphism of order two on $F$.
\end{rem}

\subsection{Generic matrices and the trace ring}\label{ss:gentr}

For $g,n\in\N$ let
$$\ulxi=\left\{\xi_{j\imath\jmath}\colon 1\le j\le g, 1\le \imath,\jmath\le n\right\}$$
be a set of commuting indeterminates. We recall the terminology from Section \ref{s:intro}. Let
$$\Xi_j=(\xi_{j\imath\jmath})_{\imath\jmath}\in\matpoly$$
be $n\times n$ generic matrices and let $\GM\subset\matpoly$ be the $\R$-algebra generated by $\Xi_j$ and their transposes $\Xi_j^{\ti}$. Furthermore, let $\TR\subset\matpoly$ be the $\R$-algebra generated by $\GM$ and traces of elements in $\GM$. This algebra is called the {\bf trace ring} of $n\times n$ generic matrices (see e.g.~\cite[Section 7]{Pro}) and inherits the transpose involution $\ti$ and trace $\tr$ from $\matpoly$. Let $\ceng\subset\cen\subset\R[\ulxi]$ be the centers of $\GM$ and $\TR$, respectively. The elements of $\TR$ are called {\bf trace polynomials} and the elements of $\cen$ are called {\bf pure trace polynomials}.

There is another, more invariant-theoretic description of the trace ring. Define the following action of the orthogonal group $\ort$ on $\mat^g$:
\begin{equation}\label{e:act}
(X_1,\dots,X_g)^u := (uX_1u^{\ti},\dots,uX_gu^{\ti}),\qquad X_j\in\mat,\ u\in\ort
\end{equation}
and consider $\R[\ulxi]$ as the coordinate ring of $\mat^g$. By \cite[Theorems 7.1 and 7.2]{Pro}, $\cen$ is the ring of $\ort$-invariants in $\R[\ulxi]$ and $\TR$ is the ring of $\ort$-concomitants in $\matpoly$, i.e., elements $f\in\matpoly$ satisfying
$$f(X^u)=uf(X)u^{\ti}$$
for all $X\in\mat^g$ and $u\in\ort$.

We list a few important properties of $\GM$ and $\TR$ that will be used frequently in the sequel.

\begin{enumerate}[label={\rm(\alph*)}]
\item Let $\JJ(n,\ti,\tr)\subset\free$ denote the set of trace $*$-identities of $(\mat,\ti,\tr)$. Then $\GM\cong \R\mx/\JJ(n,\ti)$ by \cite[Remark 3.2.31]{Row2} and $\TR\cong\free/\JJ(n,\ti,\tr)$  by \cite[Theorem 8.4]{Pro}.

\item By \cite[Theorem 20.1]{Pro}, the ring of central quotients of $\GM$ is a central simple algebra of degree $n$,  which is also the ring of rational $\ort$-concomitants in $\matrat$. It is called the {\bf universal central simple algebra with orthogonal involution} of degree $n$. We denote it by $\USA$ and its center by $Z_n$. Note that $\USA$ is also the ring of central quotients of $\TR$.

\item By \cite[Theorem 7.3]{Pro}, $\cen$ is a finitely generated $\R$-algebra and $\TR$ is finitely spanned over $\cen$. In particular, $\cen$ and $\TR$ are Noetherian rings.
\end{enumerate}

\subsubsection{Reynolds operator}

This subsection is to recall some basic properties of the Reynolds operator \cite[Subsection 2.2.1]{DK}. Let $G$ be an algebraic group and $X$ an affine $G$-variety. The {\bf Reynolds operator} $\cR:F[X]\to F[X]^G$ is a linear map with the properties:
\begin{enumerate}[label={\rm(\arabic*)}]
\item $\cR(f)=f$ for $f\in F[X]^G$,
\item $\cR$ is a $G$-module homomorphism; i.e., $\cR(f^u)=\cR(f)$ for $f\in F[X],u\in G$.
\end{enumerate}
The Reynolds operator is hence a $G$-invariant projection onto the space of the invariants. The Reynolds operator exists if $G$ is linearly reductive and is then unique (see e.g.~\cite[Theorem 2.2.5]{DK}).

Let $M,N$ be $G$-modules and $f:M\to N$ a $G$-module homomorphism. 
Denote by $M^G,N^G$ the modules of invariants of $M$, $N$, resp., the corresponding Reynolds operators by $\cR_M,\cR_N$, resp., and $f^G$  a $G$-module homomorphism $f$ restricted to $M^G$. 
Then $\cR_Nf=f^G\cR_M$. This easily follows by the uniqueness of the Reynolds operator.  
The Reynolds operator is thus functorial.

In our case $\ort$ acts on $\mat^g$ by simultaneous conjugation as in \eqref{e:act}. Since $\matpoly$ can be identified with polynomial maps $\mat^g\to\mat$, we have the Reynolds operator $\rey:\matpoly\to\TR$ with respect to the action \eqref{e:act}. Since $\ort$ is a compact Lie group, $\rey$ can be given by the averaging integral formula (with respect to the normalized left Haar measure $\mu$ on $\ort$)
\begin{equation}\label{e:rey2}\rey(f)=\int_{\ort} f^u \,d\mu(u).\end{equation}
Consequently $\rey$ is a trace-intertwining $\TR$-module homomorphism, i.e.,
\begin{equation}\label{e:rey3}
\rey(hf)=h\rey(f),\qquad \rey(fh)=\rey(f)h,\qquad \tr(\rey(f))=\rey(\tr(f))
\end{equation}
for all $h\in\TR$ and $f\in\matpoly$. In Appendix \ref{a:rey} we present algebraic ways of computing $\rey$.

\subsection{Positive involutions and totally positive elements}\label{ss:posinv}

Let $\cA$ be a central simple algebra with involution $\tau$ and $*$-center $F$ (that is, $F$ is the subfield of $*$-invariant elements in the center of $\cA$). The $F$-space of $\tau$-symmetric elements in $\cA$ is denoted $\Sym\cA$. Following the terminology of \cite{PS} and \cite{KU}, an ordering $\ge$ of $F$ is a {\bf $*$-ordering} if $\tr_{\cA}(aa^\tau)\ge0$ for every $a\in\cA$. In this case we also say that $\tau$ is {\bf positive} with respect to such an ordering. An element $a\in\Sym\cA$ is {\bf positive} in a given $*$-ordering if the hermitian form $x\mapsto \tr(x^\tau ax)$ on $\cA$ is positive semidefinite. Finally, $a\in\Sym\cA$ is {\bf totally positive} if it is positive with respect to every $*$-ordering.

Let $\alpha_1,\dots,\alpha_n\in F$ be the elements appearing in a diagonalization of the form $x\mapsto \tr(xx^\tau)$ on $\cA$. By \cite[Theorem 5.4]{PS}, a symmetric $s\in\cA$ is totally positive if and only if it has a weighted sum of hermitian squares representation
\begin{equation}\label{e:p}
s=\sum_{I\in\{0,1\}^n}\alpha^I\sum_ih_{I,i}h_{I,i}^\tau,
\end{equation}
where $\alpha^I=\alpha_1^{I_1}\cdots\alpha_n^{I_n}$ and $h_{I,i}\in\cA$.

Let $\TPc\subset \cen$ be the {\bf preordering} generated by $\tr(hh^{\ti})$ for $h\in\TR$, i.e., the set of all sums of products of $\tr(hh^{\ti})$ (note that $c^2=\tr((\frac{c}{\sqrt{n}})^2)\in\TPc$ for every $c\in\cen$, so $\TPc$ is really a preordering). Further, let
$$\TP=\left\{\sum_i\omega_ih_ih_i^{\ti}\colon \omega_i\in\TPc,h_i\in\TR\right\}.$$
Note that $\TPc=\tr(\TP)$.

\begin{lem}\label{l:rey}
Let $f\in\Sym\matpoly$. If $f(X)\succeq0$ for all $X\in\mat^g$, then $\rey(f)=c^{-2}q$ for some $q\in\TP$ and $c\in \cen\setminus\{0\}$.
\end{lem}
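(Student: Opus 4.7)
The plan is to apply the Procesi-Schacher theorem \cite[Theorem 5.4]{PS} to $\rey(f)$ regarded as an element of $(\USA,\ti)$, and then carefully clear denominators so that the resulting weighted hermitian square decomposition lands in $\TR$ with weights in $\TPc$. First, from the averaging formula \eqref{e:rey2} one immediately obtains $\rey(f)(X)=\int_{\ort}u^{\ti}f(X^u)u\,du\succeq 0$, as the integrand is PSD for every $u\in\ort$.

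Next I would argue that $\rey(f)$ is totally positive in $\USA$. Given any $*$-ordering of $Z_n$, extend it to an ordering of the larger field $\R(\ulxi)\supset Z_n$. Because ``$\rey(f)(X)$ is PSD for every $X\in\R^{gn^2}$'' is a first-order statement over ordered fields, Tarski transfer makes $\rey(f)\in\opm_n(\R(\ulxi))$ PSD in the extended ordering, so $\tr(x^{\ti}\rey(f)x)\ge 0$ for every $x\in\opm_n(\R(\ulxi))$; restricting to $x\in\USA$ yields positivity of the hermitian form in the chosen $*$-ordering.

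By \cite[Theorem 5.4]{PS} there are $\alpha_1,\dots,\alpha_{n^2}\in Z_n$ diagonalizing the trace form on $\USA$ and $e_{I,j}\in\USA$ with $\rey(f)=\sum_{I\in\{0,1\}^{n^2}}\alpha^I\sum_j e_{I,j}e_{I,j}^{\ti}$. The crucial refinement is to choose a $Z_n$-basis of $\USA$ consisting of elements of $\TR$ (possible since $\USA=\TR[(\cen\setminus\{0\})^{-1}]$) and Gram-Schmidt it with respect to the nondegenerate trace form, producing $b_i\in\USA$ with $\alpha_i=\tr(b_ib_i^{\ti})$. A common denominator $c\in\cen\setminus\{0\}$ chosen so that $cb_i,ce_{I,j}\in\TR$ then yields $c^2\alpha_i=\tr((cb_i)(cb_i)^{\ti})\in\TPc$, hence $c^{2|I|}\alpha^I\in\TPc$. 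Multiplying the Procesi-Schacher decomposition by $c^{2(n^2+1)}$ and grouping $c^{2(n^2-|I|)}\cdot c^{2|I|}\alpha^I\in\TPc$ with the hermitian squares $(ce_{I,j})(ce_{I,j})^{\ti}\in\TR$ gives $C^2\rey(f)\in\TP$ with $C=c^{n^2+1}\in\cen\setminus\{0\}$. The main obstacle is the total-positivity step, which hinges on delicately relating $*$-orderings on $Z_n$ to orderings on $\R(\ulxi)$ via extension and Tarski transfer; the Gram-Schmidt maneuver that forces the weights into $\TPc$ rather than merely $Z_n$ is the other essential ingredient.
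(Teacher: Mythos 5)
Your overall architecture matches the paper's proof exactly: show $\rey(f)\succeq0$ pointwise, deduce total positivity in $\USA$, apply \cite[Theorem 5.4]{PS}, identify the diagonal entries $\alpha_k$ as traces of hermitian squares, and clear denominators. The denominator-clearing details via a common $c\in\cen\setminus\{0\}$ and the observation $c^2\alpha_i=\tr((cb_i)(cb_i)^{\ti})\in\TPc$ are a correct and slightly more explicit elaboration of what the paper says in one line (``Since $\USA$ is the ring of central quotients of $\TR$...'').

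The substantive divergence, and the gap, is in the total-positivity step. The paper simply cites \cite[Lemma 5.3]{KU} for the implication ``$\rey(f)(X)\succeq0$ for all $X\in\mat^g$ $\Rightarrow$ $\rey(f)$ is totally positive in $\USA$.'' You instead try to re-derive it, and the load-bearing assertion in your sketch is: ``Given any $*$-ordering of $Z_n$, extend it to an ordering of the larger field $\R(\ulxi)$.'' This is not automatic --- an ordering on a subfield need not extend to an overfield, and deciding when it does is exactly the nontrivial content here. That a $*$-ordering of $Z_n$ (equivalently, an ordering with $\TPc$ in its positive cone) admits such an extension is essentially the statement of the paper's own extension theorem (Theorem~\ref{t:hom}), which is proved only in Section~\ref{s:ks} using Cayley--Hamilton algebras, the structure of positive involutions, and Lemma~\ref{l:tech}; it is also the heart of the cited \cite[Lemma 5.3]{KU}. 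As written you have swept the hard step under the rug. Once that extension is granted, your Tarski-transfer argument (transfer ``PSD on $\R^{gn^2}$'' to the real closure, evaluate at the generic point, restrict the trace form to $\USA$) does go through, and the remaining Gram--Schmidt and denominator bookkeeping is fine. So: right skeleton, correct conclusion, but the crucial extension-of-orderings claim needs either a citation to \cite[Lemma 5.3]{KU} as in the paper or an actual proof (which would amount to anticipating Theorem~\ref{t:hom}).
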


\begin{proof}
By the integral formula \eqref{e:rey2} it is clear that $\rey(f)(X)\succeq0$ for all $X\in\mat^g$. Hence $\rey(f)$ is a totally positive element in $\USA$ by \cite[Lemma 5.3]{KU}, so
$$\rey(f)=\sum_{I\in\{0,1\}^{n^2}}\alpha^I\sum_ih_{I,i}h_{I,i}^{\ti}$$
for some $h_{I,i}\in\USA$ and a diagonalization $\langle\alpha_1,\dots,\alpha_{n^2}\rangle$ over $Z_n$ of the form $x\mapsto \tr(xx^{\ti})$ on $\USA$. Hence $\alpha_k=\tr(\tilde{h}_k\tilde{h}_k^{\ti})$ for some $\tilde{h}_k\in\USA$. Since $\USA$ is the ring of central quotients of $\TR$, there exist $q\in\TP$ and $c\in \cen$ such that $\rey(f)=c^{-2}q$.
\end{proof}

As demonstrated in Example \ref{exa:det}, the denominator in Lemma \ref{l:rey} is in general indispensable even if $f$ is a hermitian square or $f\in\R[\ulxi]$. For more information about images of squares under Reynolds operators for reductive groups acting on real affine varieties see \cite{CKS}.

\begin{rem}
In particular, the linear operator $\rey$ does not map squares in $\R[\ulxi]$ into $\TPc$ or hermitian squares in $\matpoly$ into $\TP$. Hence our Positivstellens\"atze in the sequel cannot simply be deduced from their commutative or matrix counterparts
by averaging with $\rey$. Furthermore, even if one were content with using totally positive polynomials (which by Lemma \ref{l:rey} are of the form $c^{-2}q$ for $q\in\TP$ and $c\in\cen\setminus\{0\}$) instead of $\TP$, one could still  not derive our results since $\rey$ is not multiplicative.
\end{rem}


\section{Counterexample to the \texorpdfstring{$3\times3$}{3x3} universal Procesi-Schacher conjecture}\label{s:ps3}

By \cite[Corollary 5.5]{PS} every totally positive element in $\usa_2$ is a sum of hermitian squares, i.e., of the form $c^{-2}\sum_ih_ih_i^{\ti}$ for $c\in C_2$ and $h_i\in\gm_2$. Indeed, by \eqref{e:p} it suffices to show that $\tr(aa^{\ti})$ is a sum of hermitian squares. Since $\usa_2$ is a division ring, we have
$$\tr(aa^{\ti})=a^{\ti}a+(\det(a)a^{-1})(\det(a)a^{-1})^{\ti}$$
for every $a\in\usa_2\setminus\{0\}$ by the Cayley-Hamilton theorem. In their 1976 paper \cite{PS}, Procesi and Schacher asked if the same holds true for $n>2$:

\begin{conj}[The universal Procesi-Schacher conjecture]\label{co:ps}
Let $n\ge 2$. Then every totally positive element in $\USA$ is a sum of hermitian squares.
\end{conj}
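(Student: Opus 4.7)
My plan is to refute the conjecture for $n=3$ along the lines of Theorem~\ref{ta:ps3}. The structural starting point is that $\usa_3$ must be split: having an orthogonal involution forces the order of its Brauer class to divide $2$, while having degree $3$ forces it to divide $3$. Hence $\usa_3 \cong \opm_3(Z_3)$ equipped with some orthogonal involution $\sigma$, adjoint to a non-degenerate rank-$3$ symmetric bilinear form over $Z_3$ which I may diagonalize as $\langle \alpha_1, \alpha_2, \alpha_3\rangle$. By the Procesi--Schacher representation \eqref{e:p}, totally positive elements are always $\alpha^I$-weighted sums of hermitian squares; plain sums of hermitian squares correspond to $I=0$, so refuting the conjecture reduces to exhibiting a totally positive element that genuinely requires some nontrivial weight.

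To work with $\sigma$ concretely I would next invoke a result attributed in the introduction to Tignol, which identifies the class of the rank-$3$ form via explicit trace expressions in $\cen$, and independently I would choose a transcendence basis of $Z_3$ over $\R$ --- a natural candidate being a maximal algebraically independent collection of pure traces of products of the generic matrices $\Xi_j,\Xi_j^{\ti}$, with algebraic independence verified by a dimension count. The explicit transcendence basis effectively presents $Z_3$ as the function field of a real variety, reducing questions about orderings and $*$-orderings of $Z_3$ to geometric data on real affine space.

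The final and most delicate step is to produce the counterexample via Prestel's theory of semiorderings. I would pick a carefully chosen $h \in \opm_3(Z_3)$ and set $a = \tr(hh^\sigma) \in \cen$; such an $a$ is tautologically totally positive, being a trace of a hermitian square and thus non-negative in every $*$-ordering. To show $a$ is not a plain sum of hermitian squares, I would build a semiordering on $Z_3$ that is compatible with $\sigma$ --- so that every central element of the form $\sum_i h_i h_i^\sigma$ is non-negative in it --- but in which the chosen $a$ is strictly negative. The hard part will be engineering such a semiordering: semiorderings lack the multiplicative closure of orderings, giving extra room beyond what Artin-style total-positivity arguments allow, but pinning one down requires tight control of the diagonalization $\langle \alpha_1, \alpha_2, \alpha_3\rangle$ via the transcendence basis, together with a choice of $h$ that places $\tr(hh^\sigma)$ outside the sum-of-hermitian-squares cone while keeping it inside the totally positive cone.
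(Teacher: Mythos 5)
Your proposal matches the paper's strategy in outline (split $\usa_3$, diagonalize the form via Tignol's idempotent lemma, exhibit a transcendence basis of $Z_3$, use Prestel's semiorderings to certify that a trace of a hermitian square is not a sum of hermitian squares), but your articulation of the final step has a real gap. You propose to build a semiordering on $Z_3$ that is ``compatible with $\sigma$'' in the sense that \emph{every} central element of the form $\sum_i h_ih_i^\sigma$ is nonnegative, and then show $\beta_1\beta_2<0$. It is not at all clear how to construct such a semiordering directly, and in fact no compatibility with the involution is needed. The paper's crucial technical move (Proposition~\ref{p:usa3}) is to \emph{assume} a putative decomposition $\beta_1\beta_2=\sum_i r_ir_i^{\ti}$ and extract its $(1,1)$-entry. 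Because the involution has been diagonalized to $x^{\ti}=\diag(1,\beta_1,\beta_2)^{-1}x^\tau\diag(1,\beta_1,\beta_2)$, the $(1,1)$-entry produces the purely commutative identity
\[
1=\beta_1\beta_2\,\sigma_0+\beta_2\,\sigma_1+\beta_1\,\sigma_2
\]
with $\sigma_0,\sigma_1,\sigma_2$ sums of squares in $Z_3$. After that, one needs only an ordinary semiordering $Q$ of the commutative field $Z_3=\R(\alpha_1,\dots,\alpha_6)$ with the sign data $-\beta_1,-\beta_2,-\beta_1\beta_2\in Q$; since semiorderings contain all squares, are closed under addition and under multiplication by squares, this forces $1\in -Q$, a contradiction. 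The existence of such a $Q$ is exactly what \cite[Exercise~5.5.3, Lemma~5.1.8]{PD} supplies once $\beta_1,\beta_2$ are written in the transcendence basis (which is why the explicit formulas $\beta_1=-\tfrac12\alpha_2$ and the expression for $\beta_2$ matter). Replacing your fuzzy ``semiordering compatible with the involution'' by ``extract the $(1,1)$-entry, then find a commutative semiordering killing the three nontrivial weights'' closes the gap. Two smaller points: algebraic independence of the $\alpha_i$ is verified in the paper by the Jacobian criterion (a genuine computation), not merely a dimension count; and Theorem~\ref{t:ps3} for arbitrary $g$ is obtained from the $g=1$ case by a specialization $\phi:\gm_{3,g}\to\gm_{3,1}$, a step your sketch omits.
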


By \eqref{e:p}, Conjecture \ref{co:ps} is equivalent to the following: every trace of a hermitian square in $\USA$ is a sum of hermitian squares in $\USA$. In this section we show that Conjecture \ref{co:ps} fails for $n=3$:

\begin{thm}\label{t:ps3}
There exist totally positive elements in $\usa_3$ that are not sums of hermitian squares in $\usa_3$.
\end{thm}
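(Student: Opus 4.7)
The plan is to produce an explicit $h\in\usa_3$ such that $\tr(hh^{\ti})\in Z_3$ is not a sum of hermitian squares in $\usa_3$. This suffices because, by the Procesi--Schacher representation \eqref{e:p}, the conjecture for $n=3$ is equivalent to every trace of a hermitian square being a sum of hermitian squares, and any such trace is automatically totally positive. So a counterexample of this shape is at once totally positive and not a sum of hermitian squares. The argument then divides naturally into four stages, mirroring the structure announced around Proposition \ref{p:idem2}, Lemma \ref{l:algindep} and Proposition \ref{p:usa3}.

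First I would split $\usa_3$. Because the degree $3$ is odd, the Brauer class of any orthogonal involution is trivial, so $\usa_3\cong (\opm_3(Z_3),\tau)$ with $\tau(X)=Q^{-1}X^{\ti}Q$ for a symmetric invertible $Q\in\opm_3(Z_3)$. The specific $Q$ would be obtained from a rank-one $\ti$-symmetric idempotent in $\gm_3$, which I expect Proposition \ref{p:idem2} (of Tignol) to supply; its entries will be rational expressions in traces of products of generic matrices. Second, Lemma \ref{l:algindep} should provide an algebraically independent family of $9g-3=\dim\mat^g-\dim\ort$ pure trace polynomials forming a transcendence basis of $Z_3$ over $\R$. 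This identifies $Z_3$ with a rational function field where orderings and semiorderings can be parametrised by specifying signs on generators.

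Third, with $\tau$ given explicitly, a hermitian square in $\usa_3$ becomes $HQ^{-1}H^{\ti}Q$ for $H\in\opm_3(Z_3)$, and its trace is the quadratic form $H\mapsto\tr(HQ^{-1}H^{\ti}Q)$ on the $9$-dimensional $Z_3$-vector space $\opm_3(Z_3)$. Diagonalising this form yields scalars $\alpha_1,\dots,\alpha_9\in Z_3$ so that the trace of any sum of hermitian squares in $\usa_3$ lies in the commutative preordering $P\subset Z_3$ generated by the $\alpha_i$. Conversely, after clearing a central square denominator, a given $\tr(hh^{\ti})$ is a sum of hermitian squares in $\usa_3$ only if it lies in $P$. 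The noncommutative membership question thus collapses to a membership question in a finitely generated preordering of the commutative field $Z_3$.

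The final and hardest step will be to exhibit a concrete $h$ for which this membership fails, which is what I expect Proposition \ref{p:usa3} to package. By Prestel's Positivstellensatz for semiorderings \cite{PD} the question reduces to constructing a semiordering of $Z_3$ containing $P$ but not the target element. Using the transcendence basis from Lemma \ref{l:algindep}, I would prescribe signs on basis elements so that each $\alpha_i$ becomes nonnegative while the chosen $\tr(hh^{\ti})$ becomes negative, and extend by Zorn to a bona fide semiordering. The principal obstacle is that $P$ and the target are both built from traces of hermitian squares, so the sign data must be delicately tuned to the specific form of $Q$ and to the expression of the target in the chosen basis; the extra flexibility gained from working with semiorderings rather than orderings — closure only under multiplication by squares, not by the semiordering itself — is precisely the slack that makes such a sign assignment possible, and finding a target intricate enough to exploit this slack is the heart of the argument.
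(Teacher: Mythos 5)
Your proposal recapitulates the paper's own argument almost step for step: split $\usa_3$ because it has odd degree and an involution of the first kind, diagonalize the adjoint involution as conjugation by $\diag(1,\beta_1,\beta_2)$ via Lemma~\ref{l:idem1} and Proposition~\ref{p:idem2}, pin down a transcendence basis of $Z_3$ via Lemma~\ref{l:algindep}, reduce hermitian-squares membership to a weighted sum-of-squares question in $Z_3$ with weights $\beta_1,\beta_2,\beta_1\beta_2$ (the paper extracts the $(1,1)$-entry rather than the full trace, but the resulting preorderings coincide modulo squares), and then exhibit a semiordering \`a la Prestel containing $-\beta_1,-\beta_2,-\beta_1\beta_2$ to exclude the target $\beta_1\beta_2=\tr(hh^{\ti})$. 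The one step you leave out is the passage from $g=1$ to general $g$: the paper completes the proof of Theorem~\ref{t:ps3} by embedding $\usa_{3,1}$ into $\usa_{3,g}$ and specializing a hypothetical sum-of-hermitian-squares representation back to $\usa_{3,1}$ via a $*$-homomorphism that fixes $\Xi_1$ and does not kill the central denominator, using that $\gm_{3,1}$ and $\gm_{3,g}$ satisfy the same polynomial $*$-identities.
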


In the first step of the proof we identify the split central simple algebra $\usa_3$ as a matrix algebra $\opm_3(F)$ for a rational function field $F$, endowed with an involution of the orthogonal type. For the constructive proof of Theorem \ref{t:ps3} we then use Prestel's theory of semiorderings \cite{PD}.

We recall some terminology of quadratic forms from \cite{KMRT}. Let $F$ be a field and $V$ an $n$-dimensional vector space. Quadratic forms $q$ and $q'$ are equivalent if there exists $\theta\in\GL_FV$ such that $q'=q\circ \theta$. Quadratic forms $q$ and $q'$ are similar if $\alpha q$ and $q'$ are equivalent for some $\alpha\in F\setminus\{0\}$. Every quadratic form is equivalent to a diagonal quadratic form, which is denoted $\langle \alpha_1,\dots,\alpha_n\rangle$ for $\alpha_i\in F$.

First we fix $g=1$ and write $\Xi=\Xi_1$. Since $\usa_3$ is an odd degree central simple algebra with involution of the first kind, $\usa_3$ is split by \cite[Corollary 2.8]{KMRT}. Let us fix a $*$-representation $\usa_3=\End_{Z_3}V$, where $V$ is a 3-dimensional vector space over $Z_3$, the center of $\usa_3$. By \cite[Proposition 2.1]{KMRT}, there exists a symmetric bilinear form $b:V\times V\to Z_3$ such that
$$b(xu,v)=b(u,x^{\ti}v)$$
for all $u,v\in V$ and $x\in\End_{Z_3}V$, where ${\ti}$ denotes the involution on $\End_{Z_3}V$ originating from $\usa_3$. Let $q:V\to Z_3$ given by $q(u)=b(u,u)$ be the associated quadratic form.

\begin{lem}\label{l:idem1}
Let $a\in\End_{Z_3}V$ be $\ti$-antisymmetric with $\tr(a^2)\neq0$. Define $e=1-2\tr(a^2)^{-1}a^2$. Then $e$ is a symmetric idempotent of rank 1 such that $V=\im e\perp \ker e$. Moreover, $\im a=\ker e$ and $\ker a=\im e$, and the determinant of the restriction of $q$ to $\ker e$ is $-\frac{1}{2}\tr(a^2)$.
\end{lem}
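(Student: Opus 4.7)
The whole argument is driven by the Cayley--Hamilton identity for the $3\times 3$ $\ti$-antisymmetric element $a$. Since adjunction with respect to the symmetric form $b$ sends $a$ to $-a$, taking transposes in a basis shows $\tr(a)=-\tr(a)$, so $\tr(a)=0$; and antisymmetry in odd dimension forces $\det(a)=0$ (for instance, $a$ is similar to $-a^{\ti}$ under $b$, so $\det(a)=(-1)^3\det(a)$). Writing out the characteristic polynomial, Cayley--Hamilton then collapses to
\[
a^3=\tfrac{1}{2}\tr(a^2)\,a,
\]
which I will use repeatedly. Write $\alpha=\tr(a^2)$.

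First I verify the properties of $e=1-2\alpha^{-1}a^2$. Symmetry is immediate because $(a^2)^{\ti}=(-a)^2=a^2$. Idempotence follows from $a^4=\frac{\alpha}{2}a^2$: $e^2=1-4\alpha^{-1}a^2+4\alpha^{-2}a^4=1-4\alpha^{-1}a^2+2\alpha^{-1}a^2=e$. The trace is $\tr(e)=3-2\alpha^{-1}\alpha=1$, so in characteristic zero $e$ is an idempotent of rank $1$. The splitting $V=\im e\oplus\ker e$ is standard for idempotents, and the direct sum is orthogonal for $b$ because for $u\in\im e$ and $v\in\ker e$ one has $b(u,v)=b(eu,v)=b(u,e^{\ti}v)=b(u,ev)=0$.

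Next I identify the image and kernel. Using $a^3=\frac{\alpha}{2}a$, a one-line computation gives $ea=a-2\alpha^{-1}a^3=0$ and similarly $ae=0$. The first says $\im a\subseteq\ker e$, and the second says $\im e\subseteq\ker a$. Since $\tr(a^2)\neq 0$, one has $a^2\neq 0$, so $\ker a$ is $1$-dimensional (as $a$ is singular in odd dimension) and $\im a$ is $2$-dimensional; combined with $\dim\im e=1$ and $\dim\ker e=2$, the containments are equalities.

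The determinant computation is the step I expect to be the most delicate, because it requires choosing a good basis and treating the determinant as an element of $Z_3^\times/(Z_3^\times)^2$. Since $e$ vanishes on $\ker e$, we have $a^2|_{\ker e}=\tfrac{\alpha}{2}\,\id_{\ker e}$; in particular, $a|_{\ker e}$ is invertible. Pick any $v_2\in\ker e$ with $b(v_2,v_2)\neq 0$ (such a vector exists since $b|_{\ker e}$ is nondegenerate in characteristic zero), and set $v_3=av_2\in\ker e$. These are linearly independent because $a|_{\ker e}$ is an isomorphism, hence they form a basis of $\ker e$. Antisymmetry of $a$ gives
\[
b(v_2,v_3)=b(v_2,av_2)=-b(av_2,v_2)=-b(v_2,v_3),
\]
so $b(v_2,v_3)=0$, while
\[
b(v_3,v_3)=b(av_2,av_2)=-b(a^2v_2,v_2)=-\tfrac{\alpha}{2}\,b(v_2,v_2).
\]
Hence the Gram matrix of $q|_{\ker e}$ in this basis is diagonal with determinant $-\tfrac{\alpha}{2}\,b(v_2,v_2)^2$, which equals $-\tfrac{1}{2}\tr(a^2)$ modulo squares, completing the lemma.
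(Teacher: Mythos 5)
Your overall outline is the same as the paper's: establish $\tr(a)=\det(a)=0$ and invoke Cayley--Hamilton to get $a^3=\tfrac12\tr(a^2)\,a$, then verify the properties of $e$ by direct computation. The one place you genuinely depart from the paper is the last claim, where the paper simply cites~\cite[Proposition~7.3]{KMRT} for the fact that the determinant of $q|_{\ker e}$ is the square class of the determinant of a nonzero antisymmetric operator on $\ker e$; you instead pick an anisotropic $v_2\in\ker e$, set $v_3=av_2$, and compute the Gram matrix explicitly. That is a correct and more self-contained derivation, and it is a nice addition.

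There is one small but real gap in your identification of $\im a$ and $\ker a$. You write that $a^2\neq0$ implies $\ker a$ is $1$-dimensional ``as $a$ is singular in odd dimension.'' Singularity only gives $\dim\ker a\ge 1$, and $a^2\neq 0$ alone does not rule out $\operatorname{rank}(a)=1$ for a general operator on a $3$-dimensional space. What closes the gap is the fact that a $\ti$-antisymmetric operator has \emph{even} rank (equivalently, in a basis where $b$ is diagonal, the matrix $Ba$ is alternating, hence of even rank), which is precisely what the paper invokes at this point; then $a\neq 0$ forces $\operatorname{rank}(a)=2$. Alternatively, you already observe in your determinant step that $a^2|_{\ker e}=\tfrac12\tr(a^2)\,\id_{\ker e}$ and hence $a|_{\ker e}$ is invertible; combined with $ae=0$ (so $a$ kills $\im e$), this gives $\ker a=\im e$ and $\im a=\ker e$ directly without any rank-parity discussion. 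Either fix is a one-line change, but as written the parenthetical reason does not justify the conclusion. A minor further nit: you should also give one sentence for why $v_2$ and $v_3=av_2$ are linearly independent; ``$a|_{\ker e}$ is an isomorphism'' does not by itself preclude $v_2$ being an eigenvector, though skew-adjointness of $a$ rules that out immediately since a nonzero eigenvalue $\lambda$ would force $\lambda\, b(v_2,v_2)=-\lambda\, b(v_2,v_2)$.
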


\begin{proof}
Since $a$ and $a^{\ti}=-a$ have the same trace and determinant, we have $\tr(a)=\det(a)=0$, so by the Cayley-Hamilton theorem it follows that
\begin{equation}\label{e:anti3}
a^3-\frac{1}{2}\tr(a^2)a=0.
\end{equation}
Hence $a^4=\frac{1}{2}\tr(a^2)a^2$ and it is straightforward to check that $e$ is a symmetric idempotent. It has rank 1 because $\tr(e)=1$, and the decomposition $V=\im e\oplus\ker e$ is orthogonal because $e$ is symmetric. The equation \eqref{e:anti3} also yields $ea=ae=0$, hence $\im a\subseteq\ker e$ and $\im e\subseteq \ker a$. Since the rank of every antisymmetric matrix is even, we have $\im a=\ker e$ and $\im e=\ker a$.

To prove the last statement, observe that the restriction of $a$ to $\ker e$ is an antisymmetric operator with determinant $-\frac{1}{2}\tr(a^2)$, and the determinant of the restriction of $q$ to $\ker e$ is the square class of the determinant of any nonzero antisymmetric operator; see \cite[Proposition 7.3]{KMRT}.
\end{proof}

\begin{prop}\label{p:idem2}
For $i=1,2$ let $a_i\in\End_{Z_3}V$ be $\ti$-antisymmetric with $\tr(a_i^2)\neq0$, and let $e_i=1-2\tr(a_i^2)^{-1}a_i^2$. If $e_1e_2=e_2e_1=0$, then $q$ is similar to $\langle 1,-\frac{1}{2}\tr(a_1^2),-\frac{1}{2}\tr(a_2^2)\rangle$.
\end{prop}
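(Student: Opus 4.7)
The plan is to exploit the orthogonal idempotent structure from Lemma \ref{l:idem1} to produce an orthogonal decomposition $V = U_1 \perp U_2 \perp W$, write $q$ in diagonal form in this decomposition, and then compare the diagonal entries to the target form up to a global scalar factor.

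First I would show that $\im e_1 \perp \im e_2$. The hypothesis $e_1e_2=0$ gives $\im e_2 \subseteq \ker e_1$, while Lemma \ref{l:idem1} gives $V = \im e_1 \perp \ker e_1$; hence $\im e_2$ is $b$-orthogonal to $\im e_1$. Set $U_i = \im e_i$ (both 1-dimensional, by Lemma \ref{l:idem1}) and let $W = (U_1 \oplus U_2)^\perp$, a 1-dimensional complement. Pick generators $u_i \in U_i$ and $w \in W$, and put $\alpha_i = q(u_i)$, $\beta = q(w)$, so that $q \simeq \langle \alpha_1, \alpha_2, \beta \rangle$ in this basis. Since $\tr(a_i^2) \neq 0$, neither $\alpha_i$ nor $\beta$ is zero (otherwise $q|_{\ker e_i}$ would be degenerate, contradicting the determinant formula in Lemma \ref{l:idem1}).

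Next I would apply Lemma \ref{l:idem1} to extract the determinant information. By the lemma, the determinant (as a square class) of $q$ restricted to $\ker e_1 = U_2 \perp W$ is $-\tfrac{1}{2}\tr(a_1^2)$, so
\[
\alpha_2 \beta \equiv -\tfrac{1}{2}\tr(a_1^2) \pmod{(Z_3^\times)^2}.
\]
Symmetrically, $\alpha_1 \beta \equiv -\tfrac{1}{2}\tr(a_2^2)$ modulo squares.

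Finally I would pass from $q$ to the similar form $\beta q$ and diagonalize:
\[
\beta q \simeq \langle \alpha_1 \beta,\; \alpha_2 \beta,\; \beta^2 \rangle \simeq \bigl\langle -\tfrac{1}{2}\tr(a_2^2),\; -\tfrac{1}{2}\tr(a_1^2),\; 1 \bigr\rangle,
\]
where in the last step we use that $\beta^2$ is a square and that diagonal entries may be adjusted within their square classes. Reordering gives the required form $\langle 1, -\tfrac{1}{2}\tr(a_1^2), -\tfrac{1}{2}\tr(a_2^2)\rangle$, and since $\beta q$ and $q$ are similar by definition, this proves the proposition.

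I do not expect a serious obstacle here; the main thing to verify carefully is that the two uses of Lemma \ref{l:idem1} are compatible with the chosen basis (the pieces $U_2 \perp W$ and $U_1 \perp W$ both really do give $\ker e_1$ and $\ker e_2$ respectively), and that the square-class manipulations are being applied to similarity, not strict equivalence. The geometric content is simply that $e_1, e_2$ are mutually orthogonal rank-one projectors and so pin down an orthogonal frame for $V$.
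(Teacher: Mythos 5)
Your proposal is correct and follows essentially the same route as the paper: both arguments produce the orthogonal rank-one decomposition $V = U_1 \perp U_2 \perp W$ from the mutually annihilating idempotents $e_1, e_2$, apply the determinant formula of Lemma \ref{l:idem1} to $\ker e_1 = U_2 \perp W$ and $\ker e_2 = U_1 \perp W$, and then scale to convert the resulting square-class identities into the stated similarity. The only cosmetic difference is that the paper introduces $e_3 = 1 - e_1 - e_2$ explicitly and verifies pairwise orthogonality via $b(u,v) = b(u, e_i e_j v) = 0$, whereas you obtain $W$ as the orthogonal complement of $U_1 \oplus U_2$; the scaling argument ($\beta q$ in your version, $\alpha q$ with $\alpha = q|_{V_3}$ in the paper's) is the same step.
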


\begin{proof}
Let $e_3=1-e_1-e_2$ and $V_i=\im e_i$ for $i=1,2,3$; we have $\dim V_i=1$ for each $i$. Moreover, if $u\in V_i$ and $v\in V_j$ for $i\neq j$, then
$$b(u,v)=b(e_iu,e_jv)=b(u,e_ie_jv)=0.$$
Therefore $V=V_1\perp V_2\perp V_3$. We have $\ker e_1=V_2\perp V_3$ and $\ker e_2=V_1\perp V_3$, and Lemma \ref{l:idem1} shows that the determinant of the restriction of $q$ to $\ker e_i$ is $-\frac{1}{2}\tr(a_i^2)$ for $i=1,2$. If $\alpha\in Z_3\setminus\{0\}$ is such that the restriction of $q$ to $V_3$ is equivalent to $\langle\alpha\rangle$, then it follows that the restriction of $q$ to $V_i$ is equivalent to $\langle-\frac{\alpha}{2}\tr(a_i^2)\rangle$ for $i=1,2$. Hence $q$ is equivalent to $\langle \alpha,-\frac{\alpha}{2}\tr(a_1^2),-\frac{\alpha}{2}\tr(a_2^2)\rangle$.
\end{proof}

Now let
\begin{alignat*}{2}
a_1 & =\Xi-\Xi^{\ti},\qquad & a_2 & =e_1\Xi(1-e_1)-(1-e_1)\Xi^{\ti} e_1,\\
e_1 & =1-2\tr(a_1^2)^{-1}a_1^2, \qquad & e_2 & =1-2\tr(a_2^2)^{-1}a_2^2, \\
\beta_1 &=-\frac12 \tr(a_1^2),\qquad & \beta_2 &=-\frac12 \tr(a_2^2).
\end{alignat*}
Since $a_1,a_2$ are nonzero, it follows from $a_i^3-\frac{1}{2}\tr(a_i^2)a_i=0$ that $\beta_i\neq0$ for $i=1,2$. It is also easy to check that $e_1e_2=e_2e_1=0$, so the conclusion of Proposition \ref{p:idem2} holds. Hence we can choose a basis of $V$ in such a way that
\begin{equation}\label{e:ad}
x^{\ti}=\diag(1,\beta_1,\beta_2)^{-1}x^\tau\diag(1,\beta_1,\beta_2)
\end{equation}
for all $x\in\End_{Z_3}V$, where $\tau$ is the transpose in $\opm_3(Z_3)=\End_{Z_3}V$ with respect to the chosen basis of $V$.

The field $Z_3$ is rational over $\R$ by \cite[Theorem 1.2]{Sal} and of transcendental degree $6$ by \cite[Theorem 1.11]{BS}. Inspired by \cite[Section 3]{For0} we present an explicit transcendental basis for $Z_3$ over $\R$. Denote
$$s=\frac12(\Xi+\Xi^{\ti}),\qquad a= \frac12(\Xi-\Xi^{\ti}),\qquad s_0=s-\frac13 \tr(s)$$
and
\begin{equation}\label{e:alphas}
\begin{aligned}
\alpha_1 &=\tr(s), \qquad & \alpha_4 &=\frac{\tr(a^2)^2\tr(s_0^2)-6\tr(s_0a^2)^2}{\tr(a^2)^2\tr(s_0^2)-4\tr(a^2)\tr(s_0^2a^2)-2\tr(s_0a^2)^2}, \\
\alpha_2 &=\tr(a^2), \qquad & \alpha_5 &=\frac{\tr(a^2)^3\tr(s_0^3)+6\tr(s_0a^2)^3}{\tr(a^2)^2\tr(s_0^2)-6\tr(s_0a^2)^2}, \\
\alpha_3 &=\tr(s_0a^2), \qquad & \alpha_6 
&=\frac{\tr(as_0a^2s_0^2)}{\tr(a^2)^2\tr(s_0^2)-6\tr(s_0a^2)^2}.
\end{aligned}
\end{equation}

\begin{lem}\label{l:algindep}
The elements $\alpha_1,\dots,\alpha_6$ are algebraically independent over $\R$, $Z_3=\R(\alpha_1,\dots,\alpha_6)$, and
\begin{equation}\label{e:expr}
\beta_1=-\frac12\alpha_2,\qquad
\beta_2=\frac{288\alpha_2^3\alpha_4^2\alpha_6^2-(3\alpha_3\alpha_4+2\alpha_4\alpha_5+9\alpha_3)^2}{9\alpha_2^2(\alpha_4+1)}.
\end{equation}
\end{lem}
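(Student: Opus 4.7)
The proof naturally decomposes into three stages: establishing the field equality $Z_3=\R(\alpha_1,\dots,\alpha_6)$, deducing algebraic independence from it, and verifying the closed-form expressions for $\beta_1$ and $\beta_2$.

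\textbf{Step 1 (field generation).} Each $\alpha_i$ is a rational expression in traces of words in $s$ and $a$, hence is $\ort$-invariant, so the inclusion $\R(\alpha_1,\dots,\alpha_6)\subseteq Z_3$ is immediate. For the reverse inclusion, the $\ort$-action on $\Xi$ decouples under $\Xi=s+a$ into simultaneous conjugations of the symmetric part $s$ and antisymmetric part $a$. By the first fundamental theorem for orthogonal invariants (in the form due to Procesi and Sibirskii), $Z_3$ is generated as a field by traces of monomials in $s$ and $a$, and using the Cayley--Hamilton identity $a^3=\tfrac12\tr(a^2)a$ for $3\times 3$ antisymmetric matrices one reduces to a finite list such as
\[
\tr(s),\ \tr(s_0^2),\ \tr(s_0^3),\ \tr(a^2),\ \tr(s_0 a^2),\ \tr(s_0^2 a^2),\ \tr(as_0a^2s_0^2).
\]
The key algebraic observation is that after clearing denominators the defining formulas \eqref{e:alphas} for $\alpha_4,\alpha_5,\alpha_6$ are each linear in exactly one of the ``missing'' traces $\tr(s_0^2)$, $\tr(s_0^3)$, $\tr(as_0a^2s_0^2)$, so these traces can be solved for as rational functions of $\alpha_1,\dots,\alpha_6$; this yields the reverse inclusion.

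\textbf{Step 2 (algebraic independence).} Once field generation is established, algebraic independence is automatic: the excerpt cites that $Z_3$ has transcendence degree $6$ over $\R$, and any six generators of a field of transcendence degree six must be algebraically independent.

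\textbf{Step 3 (formulas for $\beta_1,\beta_2$).} For $\beta_1$ the formula is immediate since $a_1=\Xi-\Xi^{\ti}$ is proportional to the antisymmetric part $a$, so $\tr(a_1^2)$ is a rational multiple of $\alpha_2$. For $\beta_2$, set $P:=e_1\Xi(1-e_1)$, so that $P^{\ti}=(1-e_1)\Xi^{\ti}e_1$ and $a_2=P-P^{\ti}$. The idempotent orthogonality $e_1(1-e_1)=0$ forces $P^2=(P^{\ti})^2=0$, so $a_2^2=-PP^{\ti}-P^{\ti}P$ and by cyclicity of the trace
\[
\tr(a_2^2)=-2\tr\bigl(e_1\Xi(1-e_1)\Xi^{\ti}\bigr).
\]
Substituting $e_1=1-2\tr(a_1^2)^{-1}a_1^2$ expands this into a rational expression in traces of words in $\Xi,\Xi^{\ti}$; translating each such trace back into the seven invariants from Step 1 and then solving the relations from Step 1 for the missing traces produces the stated closed form.

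\textbf{Main obstacle.} The principal technical difficulty lies in Step 3: after substitution, $\tr(a_2^2)$ contains traces of monomials of degree up to eight in $s$ and $a$, which must be systematically reduced using Cayley--Hamilton identities for $3\times 3$ matrices before the compact form in \eqref{e:expr} emerges. I would carry out this reduction with computer algebra support rather than by hand.
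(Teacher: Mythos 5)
There is a concrete gap in your Step 1. You list three ``missing'' traces --- $\tr(s_0^2)$, $\tr(s_0^3)$, $\tr(as_0 a^2 s_0^2)$ --- and claim that the definitions of $\alpha_4,\alpha_5,\alpha_6$ are each linear in exactly one of them, so that all three can be solved for. But a fourth generator, $\tr(s_0^2 a^2)$, also appears in the defining formula for $\alpha_4$:
\[
\alpha_4 = \frac{\tr(a^2)^2\tr(s_0^2)-6\tr(s_0a^2)^2}{\tr(a^2)^2\tr(s_0^2)-4\tr(a^2)\tr(s_0^2a^2)-2\tr(s_0a^2)^2}.
\]
Since $\alpha_1,\alpha_2,\alpha_3$ are directly the generators $\tr(s),\tr(a^2),\tr(s_0 a^2)$, the remaining \emph{four} generators $\tr(s_0^2)$, $\tr(s_0^3)$, $\tr(s_0^2 a^2)$, $\tr(as_0 a^2 s_0^2)$ must be recovered from the \emph{three} defining equations for $\alpha_4,\alpha_5,\alpha_6$. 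That system only expresses $\tr(s_0^3)$, $\tr(s_0^2 a^2)$, $\tr(as_0 a^2 s_0^2)$ rationally in terms of $\alpha_1,\dots,\alpha_6$ \emph{together with} $\tr(s_0^2)$; it leaves $\tr(s_0^2)$ itself unexpressed. The paper closes exactly this gap with the separately computer-verified identity
\[
\tr(s_0^2)=\frac{2\alpha_4}{\alpha_4+1}\,\beta_2 + \frac{6\alpha_3^2}{\alpha_2^2},
\]
combined with the closed form \eqref{e:expr} for $\beta_2$. Without this extra relation the reverse inclusion $Z_3\subseteq\R(\alpha_1,\dots,\alpha_6)$ does not follow from the $\alpha$-definitions alone.

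This forces a different logical order than the one you propose. The paper does not deduce algebraic independence from field generation plus transcendence degree; it first proves algebraic independence of $\alpha_1,\dots,\alpha_6$ directly by verifying with a CAS that $\det \operatorname{J}_{\alpha_1,\dots,\alpha_6}\neq 0$, then verifies the $\beta$-formulas \eqref{e:expr} by CAS, and only then uses the $\beta_2$ formula to finish the field-generation argument. Your Step 2 is sound in principle once Step 1 is established, but Step 1 cannot be completed without importing the $\beta_2$ relation, so the dependency you propose cannot be maintained.
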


\begin{proof}
Using a computer algebra system one can verify that the determinant of the Jacobian matrix $\operatorname{J}_{\alpha_1,\dots,\alpha_6}$ is nonzero, so by the Jacobian criterion $\alpha_1,\dots,\alpha_6$ are algebraically independent over $\R$. Likewise, \eqref{e:expr} is checked by a computer algebra system.

A minimal set of generators of pure trace polynomials in two $3\times 3$ generic matrices without involution is given in \cite[Section 1]{ADS} or in the proof of \cite[Proposition 7]{LV}. Replacing the first generic matrix by $s$ and the second generic matrix by $a$ we obtain the following generators of $T_3$:
\begin{equation}\label{e:gen}
\tr(s),\quad \tr(s_0^2),\quad \tr(s_0^3),\quad \tr(a^2),\quad \tr(s_0a^2),\quad \tr(s_0^2a^2),\quad \tr(as_0a^2s_0^2).
\end{equation}
From \eqref{e:alphas} we can directly see that \eqref{e:gen} are rational functions in $\alpha_1,\dots,\alpha_6,\tr(s_0)^2$; for example,
$$\tr(s_0^3)=\frac{\alpha_5(\alpha_2^2\tr(s_0^2)-6\alpha_3^2)-6\alpha_3^3}{\alpha_2^3}.$$
Then we use a computer algebra system to verify that
$$\tr(s_0^2)=\frac{2\alpha_4}{\alpha_4+1}\beta_2 + \frac{6\alpha_3^2}{\alpha_2^2}$$
is a rational function in $\alpha_1,\dots,\alpha_6$ and hence $Z_3=\R(\alpha_1,\dots,\alpha_6)$.
\end{proof}

\begin{prop}\label{p:usa3}
$\beta_1\beta_2\in Z_3$ is totally positive in $\usa_3$ but is not a sum of hermitian squares in $\usa_3$.
\end{prop}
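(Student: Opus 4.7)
\emph{Total positivity.} With the presentation $\usa_3\cong\opm_3(Z_3)$ and involution $x^{\ti}=D^{-1}x^\tau D$ for $D=\diag(1,\beta_1,\beta_2)$, a direct computation gives
\[
\tr(xx^{\ti}) \;=\; \sum_{i,j=1}^3 \frac{d_i}{d_j}\,x_{ij}^2.
\]
This form is positive semidefinite in an ordering of $Z_3$ precisely when all ratios $d_i/d_j$ are nonnegative, i.e., when $\beta_1,\beta_2>0$. Hence the $*$-orderings of $Z_3$ are exactly those orderings in which $\beta_1,\beta_2>0$, and in every such ordering $\beta_1\beta_2>0$. Since $\beta_1\beta_2$ is central, the hermitian form $x\mapsto\tr(x^{\ti}\beta_1\beta_2\,x)=\beta_1\beta_2\cdot\tr(xx^{\ti})$ is positive semidefinite in every $*$-ordering, so $\beta_1\beta_2$ is totally positive.

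\emph{Not a sum of hermitian squares.} Suppose toward contradiction that $\beta_1\beta_2=\sum_k h_k h_k^{\ti}$ with $h_k\in\usa_3$. Clearing central denominators, I may assume $h_k\in\opm_3(Z_3)$. The $(1,1)$-entry of the identity, using $(h_k h_k^{\ti})_{11}=\sum_j d_j(h_k)_{1j}^2$, collapses to a scalar identity in $Z_3$:
\[
\beta_1\beta_2 \;=\; \sigma_0+\beta_1\sigma_1+\beta_2\sigma_2,
\]
where $\sigma_{j-1}=\sum_k((h_k)_{1j})^2\in\sum Z_3^2$. It therefore suffices to forbid this weighted sum-of-squares representation in $Z_3$.

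By Prestel's theory of semi-orderings \cite{PD}, the weak preordering $\{\sigma_0+\beta_1\sigma_1+\beta_2\sigma_2:\sigma_i\in\sum Z_3^2\}$ coincides with the intersection of all semi-orderings of $Z_3$ containing $\beta_1$ and $\beta_2$. Hence it is enough to exhibit a semi-ordering $P$ of $Z_3$ with $\beta_1,\beta_2\in P$ and $-\beta_1\beta_2\in P$; equivalently, via a Zorn-type extension of proper weak preorderings, that $-1$ does not lie in the weak preordering generated by $\{\beta_1,\beta_2,-\beta_1\beta_2\}$, i.e., no SoS $\sigma_i\in Z_3$ satisfy $-1=\sigma_0+\beta_1\sigma_1+\beta_2\sigma_2-\beta_1\beta_2\sigma_3$.

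The main technical obstacle is this non-representability. Using Lemma~\ref{l:algindep} and the explicit formula for $\beta_2$ (which is quadratic in $\alpha_5$ with a nontrivial coefficient), $\beta_1$ and $\beta_2$ are algebraically independent over $\R$, so $Z_3$ contains a pure transcendental subfield $\R(\beta_1,\beta_2)$. The non-representability amounts to weak anisotropy of $\langle 1,\beta_1,\beta_2,-\beta_1\beta_2\rangle$ over $Z_3$, which I would prove by a valuation-theoretic descent: scale a putative identity to an integral one in $\R[\alpha_1,\dots,\alpha_6]$, apply the $\beta_1$-adic valuation, specialise to $\beta_1=0$ in the residue field, and invoke an ordering of that residue field in which $\beta_2>0$ (respectively $\beta_2<0$) to force successive SoS summands to vanish; iterating this step contradicts coprimality of the polynomial coefficients. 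The delicate point is that the $\sigma_i$ a priori live in $Z_3$ rather than in $\R(\beta_1,\beta_2)$, so the descent must be executed inside $Z_3$ itself; selecting the valuation and the auxiliary orderings of the residue field so that every step forces vanishing is where I expect the bulk of the work to lie.
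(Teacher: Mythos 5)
Your high-level structure matches the paper's: reduce the sum-of-hermitian-squares identity to its $(1,1)$-entry via the diagonal adjoint involution, obtaining a weighted sum-of-squares representation over $Z_3$ with weights built from $\beta_1,\beta_2$, and then rule this out by exhibiting a semiordering. But the heart of the argument — actually producing that semiordering — is left as a sketch of a ``valuation-theoretic descent'' whose steps you acknowledge you have not executed. That is exactly where the paper does the real work: it invokes Lemma~\ref{l:algindep} to get an explicit transcendence basis $\alpha_1,\dots,\alpha_6$ of $Z_3$ together with closed-form rational expressions for $\beta_1,\beta_2$, and then applies the concrete ``highest-degree-term'' semiordering of a rational function field from Prestel--Delzell (Exercise~5.5.3 and Lemma~5.1.8 in \cite{PD}), choosing sign conditions $\alpha_2,-\alpha_4,-\alpha_2\alpha_4\in Q$. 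Those choices, fed through the explicit formulas \eqref{e:expr}, yield $-\beta_1,-\beta_2,-\beta_1\beta_2\in Q$, which kills \eqref{e:repr1} immediately. Without the closed forms for $\beta_1,\beta_2$ in a transcendence basis, your proposed descent has no concrete handle, and in particular the assertion that the representing elements $\sigma_i$ can be forced into $\R(\beta_1,\beta_2)$ — or controlled inside $Z_3$ by a $\beta_1$-adic valuation — is precisely the unaddressed difficulty. So this is a genuine gap, not a stylistic difference.

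Two smaller points. First, your semiordering target $\{\beta_1,\beta_2,-\beta_1\beta_2\}\subset P$ is not the same as the paper's $\{-\beta_1,-\beta_2,-\beta_1\beta_2\}\subset Q$; both would contradict the representation (since semiorderings are closed under multiplication by sums of squares and $P\cap -P=\{0\}$), but they are genuinely different sign patterns and there is no a priori reason both are realizable, so one must commit to one and construct it. Second, the entry formula should be $(h_kh_k^{\ti})_{11}=\sum_j d_j^{-1}(h_k)_{1j}^2$, not $\sum_j d_j(h_k)_{1j}^2$; this is harmless here because $d_j^{-1}=d_j/d_j^2$ lets you absorb a square, but it should be stated correctly.

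On the total positivity half your route is correct and a bit different from the paper's: you classify the $*$-orderings of $\usa_3$ (those orderings of $Z_3$ with $\beta_1,\beta_2>0$) and deduce $\beta_1\beta_2>0$ in each. The paper instead simply exhibits $h$ with $\tr(hh^{\ti})=\beta_1\beta_2$, so $\beta_1\beta_2$ is a trace of a hermitian square and hence totally positive by definition — shorter, and it also makes explicit that the counterexample has the canonical form demanded by the Procesi--Schacher framework \eqref{e:p}. Either works; the paper's is cleaner.
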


\begin{proof}
Since $\beta_1\beta_2=\tr(hh^{\ti})$ for
$$h=\begin{pmatrix}0 & 0 & 0 \\ 0 & 0 & \beta_2 \\ 0 & 0 & 0 \end{pmatrix},$$
$\beta_1\beta_2$ is totally positive in $\usa_3$. Now suppose $\beta_1\beta_2=\sum_ir_ir_i^{\ti}$ for $r_i\in\usa_3$. If $r_i=(\rho_{i\imath\jmath})_{\imath\jmath}$, then the $(1,1)$-entry of $\sum_ir_ir_i^{\ti}$ equals
$$\sum_i\left(\rho_{i11}^2+\beta_1^{-1}\rho_{i12}^2+\beta_2^{-1}\rho_{i13}^2\right)$$
and therefore
\begin{equation}\label{e:repr1}
1=\beta_1\beta_2\sum_i\left(\frac{\rho_{i11}}{\beta_1\beta_2}\right)^2
+\beta_2\sum_i\left(\frac{\rho_{i12}}{\beta_1\beta_2}\right)^2
+\beta_1\sum_i\left(\frac{\rho_{i13}}{\beta_1\beta_2}\right)^2.
\end{equation}
By \cite[Exercise 5.5.3 and Lemma 5.1.8]{PD} there exists a semiordering $Q\subset\R(\alpha_1,\dots,\alpha_6)$ satisfying $\alpha_2,-\alpha_4,-\alpha_2\alpha_4\in Q$ and $p\in Q\cap \R[\alpha_1,\dots,\alpha_6]$ if and only if the term of the highest degree in $p$ belongs to $Q$. These assumptions on $Q$ yield $-\beta_1,-\beta_2,-\beta_1\beta_2\in Q$, so \eqref{e:repr1} implies $-1\in Q$, a contradiction.
\end{proof}

\begin{proof}[Proof of Theorem \ref{t:ps3}]
To be more precise we write $\usa_{3,g}$ for $\usa_3$ generated by $g$ generic matrices $\Xi_j$. Proposition \ref{p:usa3} proves Theorem \ref{t:ps3} for $g=1$. Now let $g\in\N$ be arbitrary; note that $\usa_{3,1}$ naturally $*$-embeds into $\usa_{3,g}$. Let $s\in\usa_{3,1}$ be a totally positive element that is not a sum of hermitian squares in $\usa_{3,1}$. Suppose that $s$ is a sum of hermitian squares in $\usa_{3,g}$, i.e.,
$$s=c^{-2}\sum_jh_ih_i^{\ti}$$
for some $c,h_i\in\gm_{3,g}$ with $c$ central. Since the sets of polynomial $*$-identities of $\gm_{3,1}$ and $\gm_{3,g}$ coincide, it is easy to see that there exists a $*$-homomorphism $\phi:\gm_{3,g}\to\gm_{3,1}$ satisfying $\phi(\Xi_1)=\Xi_1$ and $\phi(c)\neq0$. Then
$$s=\phi(c)^{-2}\sum_j\phi(h_i)\phi(h_i)^{\ti}$$
is a sum of hermitian squares in $\usa_{3,1}$, a contradiction.
\end{proof}

We do not know if Conjecture \ref{co:ps} holds for $n=4$, where $\usa_4$ is a division biquaternion algebra by \cite[Theorem 20.1]{Pro} and hence does not split.
In \cite{AU} the authors use signatures
of hermitian forms to distinguish between
sums of hermitian squares and general
totally positive elements.


\section{The Krivine-Stengle Positivstellensatz for trace polynomials}\label{s:ks}

In this section we prove the Krivine-Stengle Positivstellensatz representing trace polynomials positive on semialgebraic sets in terms of weighted sums of hermitian squares with denominators.

\subsection{Cyclic quadratic modules and preorderings}

For a finite $S\subset \smatpoly$ let
$$K_S=\left\{X\in\mat^g\colon s(X)\succeq0 \ \forall s\in S\right\}$$
be the {\bf semialgebraic set} described by $S$.
A set $\cQ\subseteq \STR$ is a {\bf cyclic quadratic module} if
$$1\in\cQ,\quad \cQ+\cQ\subseteq \cQ,
\quad h\cQ h^{\ti}\subseteq \cQ\ \ \forall h\in\TR,
\quad \tr(\cQ)\subset \cQ.$$
A cyclic quadratic module $\cT\subseteq \STR$ is a {\bf cyclic preordering} if $\cT\cap \cen$ is closed under multiplication. For $S\subset\STR$ let $\cQ^{\tr}_S$ and $\cT^{\tr}_S$ denote the cyclic quadratic module and preordering, respectively, generated by $S$. For example, $\cQ^{\tr}_\emptyset=\cT^{\tr}_\emptyset=\TP$.

\begin{lem}\label{l:gen}
Let $S\subseteq\STR$.
\begin{enumerate}[label={\rm(\arabic*)}]
	\item If $\cQ$ is a cyclic quadratic module, then $\tr(\cQ)=\cQ\cap \cen$.
	\item Elements of $\cQ^{\tr}_S$ are precisely sums of
	$$q_1,\qquad h_1s_1h_1^{\ti},\qquad \tr(h_2s_2h_2^{\ti})q_2$$
	for $q_i\in\TP$, $h_i\in\TR$ and $s_i\in S$.
	\item $\cT^{\tr}_S=\cQ^{\tr}_{S'}$, where
	$$S'=S\cup \left\{\prod_i\tr(h_is_ih_i^{\ti})\colon h_i\in\TR,s_i\in S\right\}.$$
\end{enumerate}
\end{lem}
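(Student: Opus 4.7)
The overall approach is to unpack the definitions of cyclic quadratic module and cyclic preordering and verify the three claimed equalities by checking set inclusions, leveraging the facts that $\R\subset\TR$ (so rational rescaling is always available) and that $\tr$ takes central values. Part (1) is the simplest: the inclusion $\tr(\cQ)\subseteq\cQ\cap\cen$ is immediate from the axioms $\tr(\cQ)\subseteq\cQ$ and $\tr(\TR)\subseteq\cen$; for the reverse, given $q\in\cQ\cap\cen$ the axiom $h\cQ h^{\ti}\subseteq\cQ$ applied with $h=1/\sqrt{n}\in\R\subset\TR$ yields $q/n\in\cQ$, and since $q$ is central one has $\tr(q/n)=q$, placing $q$ in $\tr(\cQ)$.

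For part (2), let $M$ denote the set of sums of the three listed types. To show $\cQ^{\tr}_S\subseteq M$ I would verify the four cyclic quadratic module axioms for $M$: $1\in\TP\subseteq M$, closure under sums is automatic, conjugation $h(\cdot)h^{\ti}$ preserves each of the three types (for the mixed type, $h\tr(h_2s_2h_2^{\ti})q_2h^{\ti}=\tr(h_2s_2h_2^{\ti})(hq_2h^{\ti})$ with $hq_2h^{\ti}\in\TP$ by expanding $q_2$ and pulling the central weights out), and traces collapse each type using $\tr(\tr(x)y)=\tr(x)\tr(y)$ for central $\tr(x)$; the containment $S\subseteq M$ follows by taking $h_1=1$. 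For the reverse inclusion, the first two generator types are manifestly in $\cQ^{\tr}_S$, whereas the mixed term $\tr(h_2s_2h_2^{\ti})q_2$ needs care since cyclic quadratic modules are not multiplicatively closed. I would show by induction on the number of trace-factors in $\omega\in\TPc$ that $r\omega\in\cQ^{\tr}_S$ for every $r\in\cQ^{\tr}_S\cap\cen$, with the crucial inductive step being $r\tr(kk^{\ti})=\tr(krk^{\ti})\in\cQ^{\tr}_S$ (valid because $r$ is central); then $\tr(h_2s_2h_2^{\ti})q_2=k(\tr(h_2s_2h_2^{\ti})\omega)k^{\ti}$ upon expanding $q_2=\sum \omega kk^{\ti}$.

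For part (3), the inclusion $\cQ^{\tr}_{S'}\subseteq\cT^{\tr}_S$ is a one-liner: $\cT^{\tr}_S\cap\cen$ contains every $\tr(h_is_ih_i^{\ti})$ and is multiplicatively closed, so it contains every generator of $S'$. The substantive direction is to verify that $\cQ^{\tr}_{S'}$ is itself a cyclic preordering. Combining (1) and (2), $\cQ^{\tr}_{S'}\cap\cen=\tr(\cQ^{\tr}_{S'})$ consists of sums of elements of the shape $\omega\prod_i\tr(h_is_ih_i^{\ti})$ with $\omega\in\TPc$ and $s_i\in S$ (or $\omega$ alone), and the product of two such sums has the same shape because $\TPc$ is multiplicatively closed and a product of factors $\tr(h_is_ih_i^{\ti})$ again lies in $S'$ by definition. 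The main obstacle throughout is the bookkeeping in part (2): because $\cQ^{\tr}_S$ is not multiplicatively closed, one must systematically rewrite products of central elements using the identity $r\tr(kk^{\ti})=\tr(krk^{\ti})$ before reabsorbing them into a single hermitian-square-like term; once this move is isolated and applied inductively, parts (1) and (3) reduce to direct set-theoretic verifications.
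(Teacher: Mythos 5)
Your proof is correct and amounts to a careful spelling-out of the verification the paper dismisses as ``Straightforward.'' The key moves you isolate---using $h=1/\sqrt{n}$ to realize $q\in\cQ\cap\cen$ as $\tr(q/n)$, checking the four cyclic quadratic module axioms for the candidate set $M$, and the inductive rewriting $r\,\tr(kk^{\ti})=\tr(krk^{\ti})$ to absorb $\TPc$-weights into $\cQ^{\tr}_S$---are exactly the right ones, and your derivation of the multiplicative closure of $\cQ^{\tr}_{S'}\cap\cen$ from parts (1) and (2) is sound.
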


\begin{proof}
Straightforward.
\end{proof}

Our main result of this subsection is a reduction to central generators for cyclic quadratic modules, see Corollary \ref{c:scal}. It will be used several times in the sequel. In its proof we need the following lemma.

\begin{lem}\label{l:dense}
Let $R$ be an ordered field, $\lambda_1,\dots,\lambda_n\in R$ and $p_i=\sum_{j=1}^n\lambda_j^i$ for $i\in\N$. If $\lambda_{j_0}<0$ for some $1\le j_0\le n$, then there exists $f\in \Q[p_1,\dots,p_n][\zeta]$ such that
\begin{equation}\label{e:neg}
\sum_{j=1}^n f(\lambda_j)^2\lambda_j<0.
\end{equation}
\end{lem}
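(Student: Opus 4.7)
The plan is to interpret $\sum_{j=1}^n f(\lambda_j)^2 \lambda_j$ as the value of a quadratic form at the coefficient vector of $f$. Writing $f = \sum_{i=0}^{n-1} c_i \zeta^i$, this sum equals $v^T M v$ where $v = (c_0,\dots,c_{n-1})^T$ and $M = (p_{i+j+1})_{i,j=0}^{n-1}$. By Newton's identities together with the Cayley--Hamilton relation for the $\lambda_j$'s, every power sum $p_k$ lies in $\Q[p_1,\dots,p_n]$, so $M$ has entries in $\Q[p_1,\dots,p_n]$. The task thus reduces to producing a vector $v \in \Q[p_1,\dots,p_n]^n$ with $v^T M v < 0$.

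First I would pin down the signature of $M$ over $R$. Let $\mu_1,\dots,\mu_m$ be the distinct values among $\lambda_1,\dots,\lambda_n$ with multiplicities $k_1,\dots,k_m$, and let $L_l \in R[\zeta]_{<m}$ be Lagrange interpolators with $L_l(\mu_{l'}) = \delta_{ll'}$. A direct computation gives $L_l^T M L_{l'} = k_l \mu_l \delta_{ll'}$, so the nondegenerate part of $M$ is diagonal with entries $k_1\mu_1,\dots,k_m\mu_m$, and its signature is recorded by the signs of the $\mu_l$'s. Because $\lambda_{j_0} < 0$, at least one $\mu_l$ is negative, and thus $M$ admits a negative direction over $R$.

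Next I would pass to the ordered subfield $F := \Q(p_1,\dots,p_n) \subseteq R$. Since $\kar F = 0$, Gauss reduction yields $P \in \GL_n(F)$ with $P^T M P = \diag(c_1,\dots,c_n)$ for some $c_i \in F$. Diagonalization is preserved under the inclusion $F \hookrightarrow R$, and Sylvester's law of inertia over ordered fields guarantees that the signs of the $c_i$'s (viewed in $R$) agree with the signature of $M$ over $R$. Hence some $c_{i_0}$ is negative in $R$. Taking $v_0 := P e_{i_0} \in F^n$ we get $v_0^T M v_0 = c_{i_0} < 0$; clearing denominators by some $d \in \Q[p_1,\dots,p_n]\setminus\{0\}$ produces $v := d v_0 \in \Q[p_1,\dots,p_n]^n$ with $v^T M v = d^2 c_{i_0} < 0$ (since $d \ne 0$ in $R$ forces $d^2 > 0$). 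Then $f(\zeta) := \sum_i v_i \zeta^i$ is the desired polynomial.

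The main subtlety is justifying that the diagonalization over the possibly non--real-closed field $F$ faithfully records the $R$-signature of $M$; this is standard, since Sylvester's law of inertia holds over every ordered field and signatures are preserved under ordered field extensions. A minor housekeeping point is to remark that it suffices to restrict $f$ to degree $<n$, because any higher-degree monomial $\zeta^k$ can be rewritten on evaluation at $\lambda_j$ via the characteristic polynomial; but in fact one does not need this reduction, only the displayed choice of $f$.
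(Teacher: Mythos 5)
Your proof is correct and follows essentially the same route as the paper: rewrite $\sum_j f(\lambda_j)^2\lambda_j$ as the value of the Hankel-type quadratic form with matrix $(p_{i+j+1})$, note that all power sums lie in $\Q[p_1,\dots,p_n]$, exhibit a negative direction over $R$, diagonalize the form over $\Q(p_1,\dots,p_n)$ to transfer this to a negative entry and hence a vector with entries in that field, and clear denominators. The only cosmetic difference is that you compute the full signature via Lagrange interpolators, whereas the paper just produces one interpolating polynomial vanishing at all eigenvalues distinct from $\lambda_{j_0}$.
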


\begin{proof}
Denote $E=\Q(p_1,\dots,p_n)$ and $F=\Q(\lambda_1,\dots,\lambda_n)$. For every $f=\sum_{i=0}^{n-1}\alpha_i\zeta^i\in F[\zeta]$ we have
\begin{equation}\label{e:expand}
\sum_{j=1}^n f(\lambda_j)^2\lambda_j
=\sum_j\sum_{i,i'} \alpha_i\alpha_{i'}\lambda_j^{i+i'+1}
=\sum_{i,i'}\left(\sum_j \lambda_j^{i+i'+1}\right)\alpha_i\alpha_{i'}
=\sum_{i,i'}p_{i+i'+1}\alpha_i\alpha_{i'}.
\end{equation}
Note that $p_i\in E$ for every $i\in\N$ and define $P\in \opm_n(E)$ by $P_{ij}=p_{i+j-1}$. If $\lambda_{j_0}<0$, then there clearly exists $f_0\in F[\zeta]$ of degree $n-1$ such that $f_0(\lambda_{j_0})\neq0$ and $f_0(\lambda_j)=0$ for $\lambda_j\neq\lambda_{j_0}$. Then $f_0$ satisfies \eqref{e:neg}, so $P$ is not positive semidefinite as a matrix over $F$ by \eqref{e:expand}. Since $P=QDQ^{\ti}$ for some $Q\in\GL_n(E)$ and diagonal $D\in\opm_n(E)$, we conclude that $P$ is not positive semidefinite as a matrix over $E$, so there exists $v=(\beta_0,\dots,\beta_{n-1})^{\ti}\in E^n$ such that $v^{\ti}Pv<0$. By \eqref{e:expand}, $f_1=\sum_{i=0}^{n-1}\beta_i\zeta^i\in E[\zeta]$ satisfies \eqref{e:neg}. After clearing the denominators of the coefficients of $f_1$ we obtain $f\in \Q[p_1,\dots,p_n][\zeta]$ satisfying \eqref{e:neg}.
\end{proof}

The proof of the next proposition requires some well-known notions and facts from real algebra that we recall now. Let $\Lambda$ be a commutative unital ring. Then $P\subset \Lambda$ is a {\bf ordering} if $P$ is closed under addition and multiplication, $P\cup -P=\Lambda$ and $P\cap -P$ is a prime ideal in $\Lambda$. Note that every ordering in $\Lambda$ gives rise to a ring homomorphism from $\Lambda$ into a real closed field and vice versa. The set of all orderings is the {\bf real spectrum} of $\Lambda$, denoted $\Sper\Lambda$. For $a\in\Lambda$ let $K(a)=\{P\in\Sper\Lambda\colon a\in P \}$. Then the sets $K(a)$ and $\Sper\Lambda\setminus K(a)$ for $a\in\Lambda$ form a subbasis of the {\bf constructible topology} \cite[Section 7.1]{BCR} (also called patch topology \cite[Section 2.4]{Mar}). By \cite[Proposition 1.1.12]{BCR} or \cite[Theorem 2.4.1]{Mar}, $\Sper\Lambda$ endowed with this topology is a compact Hausdorff space. In particular, since the sets $K(a)$ are closed in $\Sper\Lambda$, they are also compact.

\begin{prop}\label{p:scal}
For every $s\in\STR$ let $\cO\subset \TR$ be the ring of polynomials in $s$ and $\tr(s^i)$ for $i\in\N$ with rational coefficients, and set
$$S=\left\{ \tr(hsh)\colon h\in\cO\right\}\subset\tr\left(\cQ^{\tr}_{\{s\}}\right).$$
Then there exists a finite subset $S_0\subset S$ such that $K_{\{s\}}=K_{S_0}$.
\end{prop}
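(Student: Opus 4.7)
The plan is to work inside the real spectrum $\Sper\cen$ and invoke its compactness in the constructible topology to reduce the infinite family $S$ to a finite subfamily.

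First I would observe that for a real closed field $R$, a symmetric $M\in\opm_n(R)$ is positive semidefinite iff every coefficient of $\det(tI+M)\in R[t]$ is non-negative, since these are the elementary symmetric polynomials in the eigenvalues of $M$. The coefficients $c_1,\dots,c_n$ of $\det(tI+s)\in\cen[t]$ therefore encode positive semidefiniteness of $s$ at every point of $\Sper\cen$: if $P\in\Sper\cen$ has associated homomorphism $\phi_P\colon\cen\to R_P$, then $s$ has non-negative eigenvalues in $R_P$ iff $c_1,\dots,c_n\in P$. Set
$$\tilde K_{\{s\}} \;:=\; \bigcap_{i=1}^n K(c_i) \;\subseteq\; \Sper\cen.$$
This is a finite Boolean combination of basic closed sets, hence clopen in the constructible topology, so its complement is compact. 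The evaluation map $\mat^g\to\Sper\cen$ carries $K_{\{s\}}$ into $\tilde K_{\{s\}}$, and any finite $S_0\subset S$ with $\bigcap_{h\in S_0}K(h)\subseteq\tilde K_{\{s\}}$ will pull back to $K_{S_0}\subseteq K_{\{s\}}$.

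Next I would verify that $\{\Sper\cen\setminus K(\tr(hsh))\}_{h\in\cO}$ is an open cover (in the constructible topology) of $\Sper\cen\setminus\tilde K_{\{s\}}$. Fix such a $P$ and let $\lambda_1,\dots,\lambda_n\in R_P$ be the eigenvalues of $s$ (which lie in $R_P$ by symmetry). Since $P\notin\tilde K_{\{s\}}$, some $\lambda_{j_0}<0$. With $p_i:=\phi_P(\tr(s^i))=\sum_j\lambda_j^i$, Lemma~\ref{l:dense} supplies $f\in\Q[p_1,\dots,p_n][\zeta]$ with $\sum_j f(\lambda_j)^2\lambda_j<0$. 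Lifting $f$ to $\tilde f\in\cO$ via $\zeta\mapsto s$, $p_i\mapsto\tr(s^i)$ produces an element that commutes with $s$ (being a polynomial in $s$ with central coefficients), and the symmetric-function identity $\tr(\tilde f^2 s)=\sum_j f(\lambda_j)^2\lambda_j$, valid already at the level of the formal diagonalization $s=\diag(\lambda_1,\dots,\lambda_n)$, transfers under $\phi_P$ to show $\phi_P(\tr(\tilde f s\tilde f))<0$, so $P\notin K(\tr(\tilde f s\tilde f))$.

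Compactness then extracts a finite subcover indexed by $h_1,\dots,h_m\in\cO$, and $S_0:=\{\tr(h_i s h_i):1\le i\le m\}$ satisfies $\bigcap_i K(\tr(h_i s h_i))\subseteq\tilde K_{\{s\}}$; pulling back gives $K_{S_0}\subseteq K_{\{s\}}$. For the reverse inclusion, if $X\in K_{\{s\}}$ and $h\in\cO$, then $h(X)$ commutes with $s(X)\succeq0$, so $h(X)s(X)h(X)=h(X)^2 s(X)$ is a product of commuting PSD matrices, hence PSD, yielding $\tr(h(X)s(X)h(X))\ge0$ and thus $X\in K_{S_0}$. The hard part is the second step: passing from the per-point conclusion of Lemma~\ref{l:dense} to an element of $\cO$ usable inside the real spectrum. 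This succeeds precisely because Lemma~\ref{l:dense} produces $f$ with coefficients in $\Q[p_1,\dots,p_n]$ (rather than in the full $R_P$), so the substitution $p_i\leftrightarrow\tr(s^i)$, $\zeta\leftrightarrow s$ yields a canonical lift to $\cO$, and the identity $\tr(\tilde f^2 s)=\sum_j f(\lambda_j)^2\lambda_j$, being a formal statement about symmetric functions, transfers to any specialization at $P$.
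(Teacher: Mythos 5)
The overall architecture of your argument tracks the paper's: use Lemma~\ref{l:dense} to get a witness $h$, pass to the real spectrum, and invoke compactness of the constructible topology to extract a finite subcover. But there is a genuine gap in where you run the spectral argument. You work in $\Sper\cen$ and assert, for an arbitrary $P\in\Sper\cen$, that ``the eigenvalues $\lambda_1,\dots,\lambda_n$ of $s$ lie in $R_P$ by symmetry.'' That is not justified: a point $P$ of $\Sper\cen$ only gives a homomorphism $\phi_P\colon\cen\to R_P$, and this does \emph{not} in general extend to a homomorphism $\R[\ulxi]\to R_P$ (this is exactly the content of Theorem~\ref{t:hom}, and the remark after it gives an explicit non-extendable $\phi\colon T_2\to\R$). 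So there is no symmetric matrix $s(X)\in\opm_n(R_P)$ underlying $P$, and the roots of the characteristic polynomial of $s$ under $\phi_P$ are a priori only in $R_P[i]$, not in $R_P$. Consequently, the hypothesis ``$\lambda_{j_0}<0$'' of Lemma~\ref{l:dense} need not make sense, and your covering argument for $\Sper\cen\setminus\tilde K_{\{s\}}$ is incomplete at precisely the step you flagged as the hard one.

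The paper sidesteps this by doing the compactness argument in $\Sper\R[\ulxi]$ rather than $\Sper\cen$: there every ordering \emph{does} correspond to a tuple $X\in\opm_n(R)^g$ with $R$ real closed, so $s(X)$ is an honest symmetric matrix over $R$, it diagonalizes over $R$ (using that $\ort$-invariance lets one conjugate $s(X)$ to diagonal form), and Lemma~\ref{l:dense} applies directly. The resulting identity $\bigcap_j K(\sigma_j)=\bigcap_{c\in S}K(c)$ in $\Sper\R[\ulxi]$, together with compactness, gives the finite $S_0$, and one then descends to $\mat^g$. Your covering claim in $\Sper\cen$ is in fact \emph{true}, but proving it requires an additional ingredient your proposal does not supply: either argue that the statement ``Hankel matrix of the $p_i$ PSD implies $c_i\ge0$'' is first-order in $p_1,\dots,p_n$, verify it over $\R$ via the spectral theorem, and transfer by Tarski--Seidenberg to all real closed fields; or else restrict the cover to the image of $\Sper\R[\ulxi]\to\Sper\cen$ (a closed, hence compact, subset) and note that the eventual pullback only involves points of $\mat^g$, which do lie in that image. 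As written, the assertion ``by symmetry'' is not a proof, and it is the one step that distinguishes $\Sper\cen$ from $\Sper\R[\ulxi]$ here. The remaining steps (the lift of $f$ via $\zeta\mapsto s$, $p_i\mapsto\tr(s^i)$, the symmetric-function identity, and the reverse inclusion $K_{\{s\}}\subseteq K_{S_0}$) are fine.
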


\begin{proof}
First we prove that for every real closed field $R$ we have
\begin{equation}\label{e:inf}
\left\{X\in\opm_n(R)^g\colon s(X)\succeq0 \right\} =
\bigcap_{c\in S} \left\{X\in\opm_n(R)^g\colon c(X)\ge0 \right\}.
\end{equation}
The inclusion $\subseteq$ is obvious. Let $X\in\opm_n(R)^g$ be such that $s(X)$ is not positive semidefinite. Since $R$ is real closed and pure trace polynomials are $\ort$-invariant, we can assume that $s(X)=\diag(\lambda_1,\dots,\lambda_n)$ is diagonal and $\lambda_j<0$ for some $j$. If $p_i=\tr(s(X)^i)$, then by Lemma \ref{l:dense} there exists a polynomial $f\in\Q[p_1,\dots,p_n][\zeta]$ such that
$$\sum_{i=1}^n f(\lambda_i)^2\lambda_i<0.$$
If $h\in\cO$ is such that $h(X)=f(s(X))$, then $\tr(h(X)s(X)h(X))<0$. Hence $\supseteq$ in \eqref{e:inf} holds.

Let $\sigma_j=\tr(\wedge^j s)\in\cen$ for $1\le j\le n$, where $\wedge^j s$ denotes the $j$th exterior power of $s$; hence $\sigma_j$ are signed coefficients of the characteristic polynomial for $s$ and
$$\left\{X\in\opm_n(R)^g\colon s(X)\succeq0 \right\}
=\left\{X\in\opm_n(R)^g\colon \sigma_1(X)\ge0,\dots,\sigma_n(X)\ge0 \right\}$$
for all real closed fields $R$. In terms of $\Sper\R[\ulxi]$ and the notation introduced before the proposition, \eqref{e:inf} can be stated as
\begin{equation}\label{e:inf2}
\bigcap_{j=1}^n K(\sigma_j) =\bigcap_{c\in S} K(c)
\end{equation}
by the correspondence between homomorphisms from $\R[\ulxi]$ to real closed fields and orderings in $\R[\ulxi]$. Since the 
complement of the left-hand side of \eqref{e:inf2} is compact in the constructible topology, there exists a finite subset $S_0\subset S$ such that
$$\bigcap_{j=1}^n K(\sigma_j) =\bigcap_{c\in S_0} K(c)$$
and consequently
\[K_{\{s\}}=K_{\{\sigma_1,\dots,\sigma_n\}}=K_{S_0}.\qedhere\]
\end{proof}

\begin{cor}\label{c:scal}
For every finite set $S\subset\STR$ there exists a finite set $S'\subset \tr(\cQ^{\tr}_S)$ such that $K_S=K_{S'}$.
\end{cor}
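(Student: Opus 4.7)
The plan is to reduce the case of an arbitrary finite $S\subset\STR$ to the single-element case handled by Proposition \ref{p:scal}, by treating each generator separately and taking a union. First I would note the elementary identity
\[
K_S = \bigcap_{s\in S} K_{\{s\}},
\]
so it suffices to express each $K_{\{s\}}$ as $K_{S_s}$ for a finite set $S_s$ that sits inside $\tr(\cQ^{\tr}_S)$, and then take $S' = \bigcup_{s\in S} S_s$, which is finite because $S$ is finite.

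For each $s\in S$, Proposition \ref{p:scal} produces a finite subset $S_s$ of $\{\tr(hsh): h\in\cO_s\}$, where $\cO_s\subset\TR$ is the $\Q$-algebra generated by $s$ and the elements $\tr(s^i)$, such that $K_{\{s\}}=K_{S_s}$. By the structural description in Lemma \ref{l:gen}(2), every element of the form $\tr(hsh)$ lies in $\tr(\cQ^{\tr}_{\{s\}})$, and since $\cQ^{\tr}_{\{s\}}\subseteq \cQ^{\tr}_S$ (as $s\in S$), we obtain the inclusion
\[
S_s \;\subseteq\; \tr\bigl(\cQ^{\tr}_{\{s\}}\bigr) \;\subseteq\; \tr\bigl(\cQ^{\tr}_S\bigr).
\]
Hence $S' := \bigcup_{s\in S} S_s \subset \tr(\cQ^{\tr}_S)$ and $S'$ is finite.

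Finally I would verify the equality $K_S = K_{S'}$ by the chain
\[
K_S \;=\; \bigcap_{s\in S} K_{\{s\}} \;=\; \bigcap_{s\in S} K_{S_s} \;=\; K_{S'},
\]
where the first equality is the definition of the semialgebraic set, the second applies Proposition \ref{p:scal} to each $s$, and the third is the evident fact that the positivity locus of a finite union of constraints equals the intersection of the positivity loci. There is no real obstacle here — the work was already done in Proposition \ref{p:scal}; the corollary is purely a matter of assembling single-generator certificates into one certificate and observing that inclusions of cyclic quadratic modules descend to their traces.
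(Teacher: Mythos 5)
Your proof is correct and follows the paper's argument essentially verbatim: apply Proposition~\ref{p:scal} to each generator $s\in S$, take the union of the resulting finite central sets, and observe that the inclusions $\cQ^{\tr}_{\{s\}}\subseteq\cQ^{\tr}_S$ give $S'\subset\tr(\cQ^{\tr}_S)$ while the intersection of loci gives $K_{S'}=K_S$. The only cosmetic difference is that you invoke Lemma~\ref{l:gen}(2) to place $\tr(hsh)$ in $\tr(\cQ^{\tr}_{\{s\}})$, which Proposition~\ref{p:scal} already asserts directly.
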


\begin{proof}
Let $S=\{s_1,\dots,s_\ell\}$. By Proposition \ref{p:scal} there exist finite sets $S_i\subset \cQ^{\tr}_{\{s_i\}}\cap \cen$ with $K_{S_i}=K_{\{s_i\}}$. If $S'=S_1\cup\cdots\cup S_\ell$, then
$$S'\subset \bigcup_i \cQ^{\tr}_{\{s_i\}}\cap \cen\subset \cQ^{\tr}_S\cap \cen$$
and
\[K_{S'}=\bigcap_i K_{S_i}=\bigcap_i K_{\{s_i\}}=K_S. \qedhere\]
\end{proof}

\begin{cor}\label{c:scalcqm}
For every cyclic quadratic module $\cQ\subseteq\STR$ we have $K_{\cQ}=K_{\tr(\cQ)}$.
\end{cor}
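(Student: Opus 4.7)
The plan is to prove the two inclusions separately, with the nontrivial one being an immediate consequence of Proposition \ref{p:scal}.

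For the inclusion $K_{\cQ}\subseteq K_{\tr(\cQ)}$, I would argue as follows. By the definition of a cyclic quadratic module, $\tr(\cQ)\subseteq \cQ$ (this is one of the defining closure properties). So if $X\in K_{\cQ}$ and $c\in \tr(\cQ)$, then $c(X)\succeq 0$ as an element of $\mat$; but $c\in\cen$, so $c(X)$ is a scalar multiple of the identity, and $c(X)\succeq 0$ is equivalent to the scalar value $c(X)\ge 0$. Hence $X\in K_{\tr(\cQ)}$.

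For the reverse inclusion $K_{\tr(\cQ)}\subseteq K_{\cQ}$, I would fix an arbitrary $s\in\cQ$ and invoke Proposition \ref{p:scal}: there is a finite subset
$$S_0\subseteq \{\tr(hsh)\colon h\in\cO\}\subseteq \tr\bigl(\cQ^{\tr}_{\{s\}}\bigr)$$
with $K_{\{s\}}=K_{S_0}$. Since $s\in\cQ$ and $\cQ^{\tr}_{\{s\}}$ is the \emph{smallest} cyclic quadratic module containing $s$, we have $\cQ^{\tr}_{\{s\}}\subseteq \cQ$, hence $S_0\subseteq \tr(\cQ)$. Now for $X\in K_{\tr(\cQ)}$ every $c\in S_0$ satisfies $c(X)\ge 0$, so $X\in K_{S_0}=K_{\{s\}}$ and therefore $s(X)\succeq 0$. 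Since $s\in\cQ$ was arbitrary, $X\in K_{\cQ}$.

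There is essentially no obstacle here; the entire content of the corollary is packaged inside Proposition \ref{p:scal}, which already performs the reduction from a matrix-valued constraint $s\succeq 0$ to finitely many pure trace constraints of the form $\tr(hsh)\ge 0$ that additionally lie in the cyclic quadratic module generated by $s$. The only thing to check carefully is that these scalar constraints stay inside $\tr(\cQ)$, which follows at once from the minimality of $\cQ^{\tr}_{\{s\}}$ and the inclusion $s\in\cQ$.
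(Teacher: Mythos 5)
Your proof is correct and takes essentially the same route as the paper, which simply cites Corollary \ref{c:scal} as giving the result ``directly''; you have merely spelled out that citation by applying it one generator $s\in\cQ$ at a time (i.e.\ by invoking Proposition \ref{p:scal} for singletons) together with the minimality of $\cQ^{\tr}_{\{s\}}$ and the containment $\tr(\cQ)\subseteq\cQ$ for the easy inclusion.
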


\begin{proof}
Direct consequence of Corollary \ref{c:scal}.
\end{proof}

\subsection{An extension theorem}

The main result in this subsection, Theorem \ref{t:hom}, characterizes homomorphisms from pure trace polynomials $\cen$ to a real closed field $R$ which arise via point evaluations $\xi_{j\imath\jmath}\mapsto\alpha_{j\imath\jmath}\in R$.

We start with some additional terminology. Let $F$ be a field and let $\cA$ be a finite-dimensional simple $F$-algebra with center $C$. If $\tr_{\cA}$ is the reduced trace of $\cA$ as a central simple algebra and $\tr_{C/F}$ is the trace of the field extension $C/F$, then
$$\tr_{\cA}^F=\tr_{C/F}\circ\tr_{\cA}:\cA\to F$$
is called the {\bf reduced $F$-trace} of $\cA$ \cite[Section 4]{DPRR}.

\begin{prop}\label{p:posinv}
Let $R\supseteq\R$ be a real closed field and $\cA$ a finite-dimensional semisimple $R$-algebra with an $R$-trace $\chi$ and a split involution, which is positive on every simple factor. Assume there exists a trace preserving $*$-homomorphism $\Phi:\TR\to\cA$ such that $\Phi(\cen)\subseteq R$ and $\cA$ is generated by $\Phi(\TR)$ over $R$. Then there exists a trace preserving $*$-embedding of $\cA$ into $(\opm_n(R),\ti,\tr)$.
\end{prop}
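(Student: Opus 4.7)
The plan is to decompose $\cA$ into simple factors, use positivity to restrict the types of involutions that appear, exploit the Cayley-Hamilton identity inherited through $\Phi$ from $\TR$ to force $\chi|_{\cA_i}$ to be a non-negative integer multiple of the reduced $R$-trace, and then assemble a block-diagonal embedding.

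First I would write $\cA=\prod_{i=1}^k\cA_i$ with each simple factor $\cA_i$ stable under $\tau$, so that $\tau_i:=\tau|_{\cA_i}$ is a positive involution on the split central simple $R$-algebra $\cA_i$. Since $R$ is real closed, the center $C_i$ is $R$ or $R[\sqrt{-1}]$, and a direct check (e.g.\ for $a=\diag(1,-1,0,\dots,0)$ one computes $\tr(aa^{\si})<0$) rules out symplectic involutions. Hence, up to $*$-isomorphism, $\cA_i$ is either $(\opm_{m_i}(R),\ti)$ (orthogonal case) or $(\opm_{m_i}(R[\sqrt{-1}]),*)$ with $*$ conjugate-transpose (unitary case). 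Since any $R$-trace on a simple $R$-algebra is a scalar multiple of the reduced $R$-trace, I write $\chi|_{\cA_i}=\mu_i\tr^R_{\cA_i}$ with $\mu_i\in R_{\geq 0}$ by positivity of $\tau_i$, and denote $d_i:=\dim_R C_i\in\{1,2\}$.

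Because $\cA$ is generated over $R$ by $\Phi(\TR)$ and $\Phi$ is a trace-preserving $*$-homomorphism, $\cA$ inherits every trace $*$-identity of $(\opm_n(R),\ti,\tr)$; in particular the Cayley-Hamilton identity of degree $n$ yields, for each $a\in\cA$, a relation $\sum_{k=0}^{n}(-1)^k\sigma_k(a)a^{n-k}=0$ with $\sigma_k(a)\in R$ computed from $\chi(a),\dots,\chi(a^k)$ by Newton-Girard, and evaluating at $a=1_{\cA}$ gives $\chi(1_{\cA})=n$. The crucial step is to prove $\mu_i\in\N$. For a rank-one $C_i$-idempotent $p_i\in\cA_i$ with $u:=\chi(p_i)=\mu_i d_i$, Newton yields $\sigma_k(p_i)=\binom{u}{k}$, so the Cayley-Hamilton polynomial is $t^{n-u}(t-1)^u$; since this must annihilate $p_i$ and the minimal polynomial of a proper idempotent divides $t(t-1)$, both exponents must be non-negative integers, giving $u\in\N$ and hence $\mu_i\in\N$ in the orthogonal case. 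For unitary $\cA_i$ I would apply the same idea to $a=\sqrt{-1}\cdot p_i\in\cA$: its moments satisfy $\chi(a^{2\ell-1})=0$ and $\chi(a^{2\ell})=(-1)^\ell u$, Newton gives $\sigma_{2\ell}(a)=\binom{u/2}{\ell}$ and $\sigma_{2\ell+1}(a)=0$, and the identity $a(a^2+1)=\sqrt{-1}\,p_i(1-p_i)=0$ forces the Cayley-Hamilton polynomial of $a$ to be divisible by $t^2+1$ in $R[t]$. The summation identity $\sum_{\ell\geq 0}\binom{u/2}{\ell}(-1)^\ell=(-1)^{\lfloor n/2\rfloor}\binom{u/2-1}{\lfloor n/2\rfloor}$ then forces $\binom{u/2-1}{\lfloor n/2\rfloor}=0$, so $u/2\in\N$, i.e.\ $\mu_i\in\N$.

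Finally, with $\mu_i\in\N$ in hand and $\sum_i\mu_id_im_i=\chi(1_{\cA})=n$, I would assemble the embedding factor by factor: an orthogonal $\cA_i=(\opm_{m_i}(R),\ti)$ embeds as $\mu_i$ diagonal copies of its identity representation into $(\opm_{\mu_im_i}(R),\ti)$, while a unitary $\cA_i=(\opm_{m_i}(R[\sqrt{-1}]),*)$ embeds via the realification $a+b\sqrt{-1}\mapsto\bigl(\begin{smallmatrix}a&b\\-b&a\end{smallmatrix}\bigr)$ iterated $\mu_i$ times into $(\opm_{2\mu_im_i}(R),\ti)$. A direct check shows that both are $*$-embeddings sending $\tau_i$ to the transpose with image trace equal to $\mu_i\tr^R_{\cA_i}=\chi|_{\cA_i}$; stacking all factors block-diagonally produces the desired trace-preserving $*$-embedding $\cA\hookrightarrow(\opm_n(R),\ti,\tr)$. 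The main obstacle I expect is the integer-multiplicity argument for unitary factors, which requires identifying the Cayley-Hamilton polynomial of a non-trivial element explicitly and invoking a Vandermonde-style summation identity, rather than relying on idempotent rank bounds alone.
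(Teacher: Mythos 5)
Your proof founders on the claim that symplectic factors cannot occur, and this is precisely where the paper has to do real work.

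You assert that a direct computation $\tr(aa^{\si})<0$ for $a=\diag(1,-1,0,\dots,0)$ rules out symplectic involutions. But you are computing with the symplectic involution on $\opm_{2m}(R)$, which is indeed not positive. Over a real closed field $R$ that is \emph{not} what appears in the classification: by \cite[Theorem 1.2]{PS} (which the paper invokes), a split central simple $R$-algebra with a positive involution is $*$-isomorphic to $(\opm_{m}(R),\ti)$, to $(\opm_{m}(R[\sqrt{-1}]),*)$ with conjugate-transpose, or to $(\opm_{m/2}(\HH),\si)$ with $\HH$ the quaternion division algebra and $\si$ the quaternionic conjugate-transpose. The last of these \emph{is} a positive involution: for $a=(a_{ij})$ one has $\tr(aa^{\si})=\sum_{i,j}\mathrm{Nrd}_{\HH}(a_{ij})\ge 0$. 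Your test element $a=\diag(1,-1,0,\dots)$ in $\opm_m(\HH)$ satisfies $a^{\si}=a$ and $\tr(aa^{\si})=2>0$, so nothing is excluded. Quaternionic factors do arise and must be handled.

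This is not a cosmetic issue, because it is exactly where the paper's substantive PI-theoretic argument lives. Once a quaternionic factor $\cA_k\cong\opm_{n_k/2}(\HH)$ is present with multiplicity $d_k$, the realification $\psi_2:\HH\hookrightarrow\opm_4(R)$ \emph{doubles} the reduced $R$-trace, so a trace-preserving block-diagonal embedding into $(\opm_n(R),\ti,\tr)$ exists only when $d_k$ is even. To prove this evenness the paper uses the specific trace $*$-identity $f_{\lceil n/2\rceil}(x_1-x_1^*,x_2-x_2^*)$: it is a trace $*$-identity of $(\mat,\ti,\tr)$ (Lemma \ref{l:skew0}), hence holds in $\TR$ and is transported to $\cA$ and to each factor $(\cA_k,\tau_k,d_k\cdot\tr^R_{\cA_k})$ by $\Phi$; after scalar extension to $C=R[i]$, Lemma \ref{l:skew} shows this identity fails on a symplectic factor with odd multiplicity, forcing $d_k\in 2\N$. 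Your argument has no replacement for this step, and indeed your elegant Newton--Girard/Cayley--Hamilton computation with a rank-one idempotent, even if carried out for a quaternionic factor, would only show $d_k$ is an integer, not that it is even (the reduced $R$-trace of a rank-one quaternionic idempotent is already $2$, so the binomial-vanishing argument only gives $2d_k\in\N$, i.e.\ no parity constraint).

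As a secondary remark: the paper does not derive the integrality of the multiplicities $d_k$ from scratch but cites \cite[Theorem 4.2]{DPRR} for the decomposition $\chi=\sum_k d_k\tr^R_{\cA_k}$ with $d_k\in\N$, which applies because $\cA$ inherits the $n$-Cayley--Hamilton structure from $\TR$ via $\Phi$. Your explicit computation with rank-one idempotents is an interesting elementary substitute for the orthogonal and unitary factors, but it only recovers something the paper already has by citation, and it does not address the genuine obstacle.
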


\begin{proof}
Let $\cA\cong\cA_1\times\cdots\times\cA_\ell$ be the decomposition in simple factors and let $n_k$ be the degree of $\cA_k$ for $1\le k\le \ell$. Moreover, let $C=R[i]$ be the algebraic closure of $R$ and $H=(\frac{-1,-1}{R})$ the division quaternion algebra over $R$. By \cite[Theorem 1.2]{PS}, each of $\cA_k$ is $*$-isomorphic to one of the following:
\begin{enumerate}[label={\rm(\Roman*)}]
	\item $\opm_{n_k}(R)$ with the transpose involution;
	\item $\opm_{n_k}(C)$ with the conjugate-transpose involution;
	\item $\opm_{n_k/2}(H)$ with the symplectic involution.
\end{enumerate}
Without loss of generality assume that there are $1\le\ell_1\le\ell_2\le\ell$ such that $\cA_{n_k}$ is of type (I) for $k\le\ell_1$, of type (II) for $\ell_1<k\le \ell_2$, and of type (III) for $\ell_2<k$. By \cite[Theorem 4.2]{DPRR} there exist $d_1,\dots,d_\ell\in\N$ such that
\begin{equation}\label{e:trdecomp0}
\chi\left(\sum_k a_k\right)=\sum_k d_k\tr_{\cA_k}^R(a_k)
\end{equation}
for $a_k\in\cA_k$. We claim that $d_k\in 2\N$ for every $k>\ell_2$. Let $n'=\lceil\frac{n}{2}\rceil$ and fix $k>\ell_2$. By Lemma \ref{l:skew0}, $f=f_{n'}(x_1-x_1^*,x_2-x_2^*)$ is a $*$-trace identity for $\TR$. Therefore $f$ is also a $*$-trace identity for $\cA$ by the assumptions on $\Phi$. Hence $f$ is a $*$-trace identity for $(\cA_k,\tau_k,d_k\cdot\tr_{\cA_k}^R)$, where $\tau_k$ is the restriction of the involution on $\cA$. Since $*$-trace identities are preserved by scalar extensions and
$$C\otimes_R\left(\cA_k,\tau_k,d_k\cdot\tr_{\cA_k}^R\right)
\cong C\otimes_R\left(\opm_{n_k/2}(H),\si,d_k\cdot(\tr_H\circ\tr)\right)
\cong \left(\opm_{n_k}(C),\si,d_k\cdot\tr\right),$$
$f$ is a $*$-trace identity for $(\opm_{n_k}(C),\si,d_k\cdot\tr)$. Now Lemma $\ref{l:skew}$ implies $d_k\in2\N$.

Thus we have
\begin{equation}\label{e:rel}
n=\sum_{k\le \ell_1}d_kn_k+\sum_{\ell_1<k\le \ell_2}2d_kn_k+\sum_{\ell_2<k}4\frac{d_k}{2}n_k
\end{equation}
by \eqref{e:trdecomp0} and the definition of the reduced $R$-trace. The standard embeddings
\begin{alignat*}{4}
\psi_1:C & \hookrightarrow \opm_2(R), \qquad &\alpha+\beta i & \mapsto &&
\begin{pmatrix}\alpha & -\beta \\ \beta & \alpha \end{pmatrix} \\
\psi_2:H & \hookrightarrow\opm_4(R),\qquad &\alpha+\beta i+\gamma j+\delta k & \mapsto &&
\begin{pmatrix}
\alpha & -\beta & -\gamma & -\delta \\
\beta & \alpha & -\delta & \gamma \\
\gamma & \delta & \alpha & -\beta \\
\delta & -\gamma & \beta & \alpha
 \end{pmatrix}
\end{alignat*}
transform conjugate-transpose involution and symplectic involution into transpose involution; moreover, $\psi_1$ preserves the reduced $R$-trace, while $\psi_2$ doubles it. Therefore we have trace preserving $*$-embeddings
$$\left(\opm_{n_k}(R),\ti,d_k\cdot\tr\right)\hookrightarrow \left(\opm_{d_kn_k}(R),\ti,\tr\right),
\qquad X\mapsto X^{\oplus d_k}$$
for $k\le \ell_1$,
$$\left(\opm_{n_k}(C),*,d_k\cdot(\tr_{C/R}\circ\tr)\right)\hookrightarrow  \left(\opm_{2d_kn_k}(R),\ti,\tr\right),
\qquad X\mapsto \psi_1(X)^{\oplus d_k}$$
for $\ell_1<k\le \ell_2$, and
$$\left(\opm_{n_k/2}(H),\si,d_k\cdot(\tr_H\circ\tr)\right)\hookrightarrow
\left(\opm_{2d_kn_k}(R),\ti,\tr\right),
\qquad X\mapsto \psi_2(X)^{\oplus d_k/2}$$
for $\ell_2<k$, where $\psi_1$ and $\psi_2$ are applied entry-wise. By \eqref{e:rel} we can combine these embeddings to obtain a trace preserving $*$-embedding
\begin{align*}
\cA \xrightarrow{\cong} &
\cA_1\times\cdots\times \cA_\ell \\
\xrightarrow{\cong} & \prod_{k\le \ell_1} \opm_{n_k}(R)
\times \prod_{\ell_1< k\le \ell_2} \opm_{n_k}(C)
\times \prod_{\ell_2< k} \opm_{n_k/2}(H) \\
\hookrightarrow & \prod_{k\le \ell_1} \opm_{d_kn_k}(R)
\times \prod_{\ell_1< k\le \ell_2} \opm_{2d_kn_k}(R)
\times \prod_{\ell_2< k} \opm_{2d_kn_k}(R) \\
\hookrightarrow & \left(\opm_n(R),\ti,\tr\right).\qedhere
\end{align*}
\end{proof}

\begin{lem}\label{l:tech}
Let $\Lambda$ be a Noetherian domain with $\kar\Lambda\neq2$, $M$ a finitely generated $\Lambda$-module, $K$ a field, $\phi:\Lambda\to K$ a ring homomorphism, and $b:M\times M\to\Lambda$ a symmetric $\Lambda$-bilinear form. Let $\pi:M\to K\otimes_\phi M$ be the natural homomorphism. Then there exist $u_1,\dots,u_\ell\in M$ such that
$\{\pi(u_1),\dots,\pi(u_\ell)\}$ is a $K$-basis of $K\otimes_\phi M$ and $\phi(b(u_i,u_{i'}))=0$ for $i\neq i'$.
\end{lem}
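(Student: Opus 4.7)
The plan is to diagonalize the induced symmetric $K$-bilinear form $\bar b$ on the finite-dimensional $K$-space $V:=K\otimes_\phi M$, via a congruence transformation whose entries lie in the fraction field of $\phi(\Lambda)$, and then clear denominators row by row so that the resulting orthogonal basis actually lifts to $M$.

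Concretely, I would first choose $\Lambda$-generators $m_1,\dots,m_d$ of $M$, so that $\pi(m_1),\dots,\pi(m_d)$ $K$-span $V$, and after renumbering prune to a $K$-basis $\pi(m_1),\dots,\pi(m_\ell)$ of $V$. The Gram matrix $B=(\phi(b(m_i,m_j)))_{i,j=1}^\ell$ lies in $\opm_\ell(\phi(\Lambda))$. Since $\phi(\Lambda)$ is a subdomain of the field $K$, its fraction field $K'\subseteq K$ is well defined, and the classical symmetric Gauss elimination (which needs $\kar K\neq 2$, automatic in the intended applications where $K$ is a real closed extension of $\R$) yields $A\in\GL_\ell(K')$ with $ABA^T$ diagonal. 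The elimination never leaves $K'$, because it only ever divides by entries of the evolving matrix, which remain in $\phi(\Lambda)$ throughout.

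Next, for each $i$ one picks $c_i\in\phi(\Lambda)\setminus\{0\}$ with $c_ia_{ij}\in\phi(\Lambda)$ for every $j$ (clear the denominators of the $i$-th row of $A$; possible since each $a_{ij}\in K'=\mathrm{Frac}(\phi(\Lambda))$), chooses $a_{ij}'\in\Lambda$ with $\phi(a_{ij}')=c_ia_{ij}$, and sets $u_i=\sum_j a_{ij}'m_j\in M$. Then $\pi(u_i)=c_i\sum_j a_{ij}\pi(m_j)$ is a nonzero rescaling of the $i$-th row of the diagonalizing transformation, so $\pi(u_1),\dots,\pi(u_\ell)$ is still a $K$-basis of $V$; row-scaling $A$ by $\mathrm{diag}(c_1,\dots,c_\ell)$ preserves diagonality of $ABA^T$, giving $\phi(b(u_i,u_{i'}))=c_ic_{i'}(ABA^T)_{ii'}=0$ for $i\neq i'$.

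Noetherianity of $\Lambda$ and finite generation of $M$ enter only to force $\ell$ finite, while the symmetry of $b$ is what makes symmetric Gauss elimination produce a genuine congruence. The one real subtlety, and the main obstacle to watch for, is establishing that the congruence diagonalization can be carried out entirely over $K'=\mathrm{Frac}(\phi(\Lambda))$ rather than merely over $K$; without that control, one would have no way to coherently clear denominators in a fashion compatible with $\phi$ and lift the basis back to $M$.
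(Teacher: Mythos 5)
Your proof is correct, but it takes a genuinely different route from the paper's. The paper argues by induction on $\ell=\dim_K(K\otimes_\phi M)$: using the polarization identity $2b(u,v)=b(u+v,u+v)-b(u,u)-b(v,v)$ it first locates $u_1\in M\setminus\ker\pi$ with $\phi(b(u_1,u_1))\neq0$, then performs a single Gram--Schmidt step with the denominator cleared \emph{inside} $M$ by setting $v_i'=b(u_1,u_1)v_i-b(u_1,v_i)u_1$, and applies the induction hypothesis to the $\Lambda$-submodule generated by the $v_i'$. You instead carry out the full symmetric Gauss elimination at once, working in $\opm_\ell(K')$ where $K'=\mathrm{Frac}(\phi(\Lambda))$, and only clear denominators once at the end, lifting the rescaled rows back to $M$. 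Both are congruence diagonalizations in spirit; your version gets the result in one linear-algebra pass, while the paper's version avoids any mention of fraction fields and stays intrinsic to $M$ throughout the recursion. Two small remarks. First, your parenthetical justification that the pivots ``remain in $\phi(\Lambda)$ throughout'' is not accurate --- after the first division the entries of the working matrix generally lie in $K'\setminus\phi(\Lambda)$; the correct (and sufficient) reason the elimination stays in $K'$ is simply that $K'$ is a field and all the row/column operations and swaps are field operations. Second, you correctly flag that what is really needed is $\kar K\neq2$ rather than $\kar\Lambda\neq2$; the paper's own polarization step has the same hidden requirement, and in the application $K$ is real closed, so this is not an obstruction, but you are right to note it. Finally, Noetherianity of $\Lambda$ is used in neither proof --- finite generation of $M$ alone gives $\ell<\infty$.
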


\begin{proof}
Let $\ell=\dim_K(K\otimes_\phi M)$. We prove the statement by induction on $\ell$. The case $\ell=1$ is trivial. Now assume that statement holds for $\ell-1$ and suppose $K\otimes_\phi M$ is of dimension $\ell$. 

If $\phi\circ b=0$, we are done. Otherwise there exists $u_1\in M\setminus\ker\pi$ with $b(u_1,u_1)\notin\ker\phi$. Indeed, if $\phi(b(u,u))=0$ for all $u\in M\setminus\ker\pi$, then $\phi(b(u,u))=0$ for all $u\in M$, so by
$$2b(u,v)=b(u+v,u+v)-b(u,u)-b(v,v)$$
it follows that $\phi(b(u,v))=0$ for every $u,v\in M$. Clearly there exist $v_2,\dots,v_\ell\in M$ such that $\{\pi(u_1),\pi(v_2)\dots,\pi(v_\ell)\}$ is a $K$-basis of $K\otimes_\phi M$. For $2\le i\le \ell$ let
$$v_i'=b(u_1,u_1)v_i-b(u_1,v_i)u_1$$
and let $M'$ be the $\Lambda$-module generated by $v_i'$. Note that $b(u_1,v)=0$ for all $v\in M'$ and $\dim_K(K\otimes_\phi M')=\ell-1$ since $\phi(b(u_1,u_1))$ is invertible in $K$. Hence we can apply the induction hypothesis to obtain $u_2,\dots,u_\ell\in M'$ such that $\{\pi(u_1),\dots,\pi(u_\ell)\}$ is a $K$-basis of $K\otimes_\phi M$ and $\phi(b(u_i,u_{i'}))=0$ for all $i\neq i'$.
\end{proof}

\begin{thm}\label{t:hom}
Let $R\supseteq\R$ be a real closed field. Then an $\R$-algebra homomorphism $\phi:\cen\to R$ extends to an $\R$-algebra homomorphism $\varphi:\R[\ulxi]\to R$ if and only if $\phi(\TPc)\subseteq R_{\ge0}$.
\end{thm}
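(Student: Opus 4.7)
The forward direction is routine: if $\varphi:\R[\ulxi]\to R$ extends $\phi$, then $\varphi$ corresponds to evaluation at a tuple $X\in\opm_n(R)^g$, and for each $h\in\TR$ one has
$$\phi(\tr(hh^{\ti}))=\tr(h(X)h(X)^{\ti})=\sum_{\imath,\jmath}(h(X))_{\imath\jmath}^2\ge 0,$$
so $\phi(\TPc)\subseteq R_{\ge 0}$.

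For the converse, the plan is to build an evaluation tuple $X\in\opm_n(R)^g$ by producing a trace-preserving $*$-homomorphism $\TR\to\opm_n(R)$ compatible with $\phi$, then feed the matrices $X_j$ to $\R[\ulxi]$. Form the finite-dimensional $R$-algebra $\cA=R\otimes_\phi\TR$ (finite since $\TR$ is finitely spanned over $\cen$ by Section \ref{ss:gentr}(c)), with inherited involution $\ti$ and $R$-linear trace $\chi=1\otimes\tr$. Applying Lemma \ref{l:tech} to $\Lambda=\cen$, $M=\TR$ and the symmetric $\cen$-bilinear form $b(h_1,h_2)=\tr(h_1h_2^{\ti})$ yields $u_1,\dots,u_\ell\in\TR$ whose images form an $R$-basis of $\cA$ with $\chi(u_iu_j^{\ti})=\phi(b(u_i,u_j))=0$ for $i\neq j$ and $\chi(u_iu_i^{\ti})=\phi(\tr(u_iu_i^{\ti}))\ge 0$ by hypothesis. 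Hence the bilinear form $(a,b)\mapsto\chi(ab^{\ti})$ on $\cA$ is positive semidefinite.

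I would next pass to the quotient by the radical of this form. Put $N=\{a\in\cA:\chi(aa^{\ti})=0\}$; polarization together with $\chi\circ{\ti}=\chi$ gives $N=\{a:\chi(ab)=0\ \forall b\in\cA\}$, which is a $\ti$-stable two-sided ideal. The quotient $\cA_0=\cA/N$ carries a positive definite form $\chi(\cdot\,\cdot^{\ti})$, inherits the involution and trace, and is still generated over $R$ by the image of $\TR$. I claim $\cA_0$ meets the hypotheses of Proposition \ref{p:posinv}. Semisimplicity: since $\TR$ satisfies a Cayley--Hamilton identity of degree $n$, so does $\cA_0$, which via Newton's identities forces $\chi(x^i)=0$ for every nilpotent $x\in\cA_0$; then for $a$ in the Jacobson radical $J=J^{\ti}$, the element $aa^{\ti}$ is nilpotent, so $\chi(aa^{\ti})=0$ and $a=0$. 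The involution preserves each simple component, because a nontrivial $\ti$-swap between two distinct simple factors $\cA_i,\cA_j$ would make $\chi((a_i+a_j)(a_i+a_j)^{\ti})$ a hyperbolic pairing, contradicting positive definiteness. Positivity on each simple factor then holds since the restriction of $\chi$ is a positive multiple of the reduced $R$-trace (the space of central $R$-functionals being one-dimensional), yielding the Procesi--Schacher types (I)--(III).

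With these hypotheses verified, Proposition \ref{p:posinv} supplies a trace-preserving $*$-embedding $\Psi:\cA_0\hookrightarrow(\opm_n(R),\ti,\tr)$. Composing with the canonical $\Phi:\TR\to\cA_0$ gives matrices $X_j=\Psi(\Phi(\Xi_j))\in\opm_n(R)$, and I set $\varphi:\R[\ulxi]\to R$ by $\xi_{j\imath\jmath}\mapsto(X_j)_{\imath\jmath}$. For any $c\in\cen\subseteq\TR$, on one hand $\Psi(\Phi(c))=\phi(c)\cdot I_n$ (by construction of $\Phi$ on $\cen\subseteq R\otimes_\phi\cen=R$), and on the other hand $\Psi(\Phi(c))=c(X)\cdot I_n$ (via the identification of $\TR$ with the concomitants in $\matpoly$), so $\varphi(c)=c(X)=\phi(c)$ and $\varphi$ extends $\phi$. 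The main obstacle is the semisimplicity and positivity analysis of $\cA_0$; once that is in place, the rest is a clean application of Proposition \ref{p:posinv}.
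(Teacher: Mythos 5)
Your strategy is the same as the paper's in outline (base change to $R\otimes_\phi\TR$, kill a radical, verify the hypotheses of Proposition \ref{p:posinv}, then read off matrices), but you carry out the two key reductions in the opposite order, and one of them is not justified as written. The paper first quotients by the Jacobson radical $\cJ$ of $R\otimes_\phi\TR$, invoking \cite[Proposition 3.2]{DPRR} together with \cite[Theorem 5.17]{Lam} (after base change to the algebraic closure) to see $\chi'$ vanishes on $\cJ$, and only then proves $\chi(aa^\tau)\ge 0$ on the quotient by applying Lemma \ref{l:tech} separately to the symmetric and antisymmetric parts of $\TR$ with the form $(h_1,h_2)\mapsto\tr(h_1h_2)$. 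You instead apply Lemma \ref{l:tech} once with the form $\tr(h_1h_2^{\ti})$ to get positive semidefiniteness of $\chi(\cdot\,\cdot^{\ti})$ upstairs, quotient by its radical $N$, and then try to deduce semisimplicity. That ordering is attractive and the use of the twisted form $\tr(h_1h_2^{\ti})$ is a genuine simplification over the paper's sym/antisym splitting.

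The gap is in the sentence claiming that Cayley--Hamilton of degree $n$ ``via Newton's identities forces $\chi(x^i)=0$ for every nilpotent $x\in\cA_0$.'' This is true for $n$-Cayley--Hamilton algebras over a field of characteristic zero, but it is not a one-line consequence of Newton's identities: the Cayley--Hamilton relation only forces the top $m$ coefficients $c_n,\dots,c_{n-m+1}$ to vanish, where $t^m$ is the minimal polynomial of $x$, and recovering $\chi(x)=\dots=\chi(x^{m-1})=0$ from that requires a separate argument. This is precisely what the paper delegates to \cite[Proposition 3.2]{DPRR}. As written, your semisimplicity step therefore has a genuine hole. The cleanest repair within your framework: take $k$ maximal with $J^k\neq 0$ and $J^{k+1}=0$, pick $0\neq a\in J^k$; then for every $b\in\cA_0$ the element $y=ab$ lies in $J^k$ and satisfies $y^2=0$. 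For square-zero $y$ the Cayley--Hamilton relation reduces to $c_{n-1}(y)\,y=c_n(y)$ with $c_j(y)=\chi(y)^j/j!$, so either $\chi(y)=0$, or $y$ is a nonzero scalar, which is impossible since $y\in J$. Hence $\chi(ab)=0$ for all $b$, so $a\in N$; but $N$ was quotiented out, contradiction. This uses only square-zero nilpotents, where the Newton/Cayley--Hamilton computation is transparent, and avoids citing \cite{DPRR}. Everything else in your write-up (polarization identifying $N$, the hyperbolic-pairing argument for the involution being split, positivity of $\chi$ on each simple factor, and the final recovery of $\varphi$ via trace preservation) is sound and matches the paper's reasoning.
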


\begin{proof}
The implication $(\Rightarrow)$ is obvious, so we prove $(\Leftarrow)$. In the terminology of \cite[Subsection 2.3]{DPRR}, $\TR$ is an $n$--Cayley-Hamilton algebra. Since $\TR$ is finitely spanned over $\cen$, $\cA'=R\otimes_\phi\TR$ is a finite-dimensional $R$-algebra which inherits an involution $\tau'$ and an $R$-trace $\chi':\cA'\to R$ from $\TR$. By \cite[Subsection 2.3]{DPRR} $\cA'$ is again an $n$--Cayley-Hamilton algebra. Let $\cJ$ be the Jacobson radical of $\cA'$. Since $\cA'$ is finite-dimensional, elements of $\cJ$ are characterized as generators of nilpotent ideals. Hence clearly $\cJ^{\tau'}\subseteq\cJ$. Moreover, if $f\in\cJ$, then $\chi'(f)=0$ by applying \cite[Proposition 3.2]{DPRR} to the scalar extension of $\cA'$ by the algebraic closure of $R$ and \cite[Theorem 5.17]{Lam}.

Therefore $\cA=\cA'/\cJ$ is a finite-dimensional semisimple $R$-algebra with involution $\tau$ and an $R$-trace $\chi:\cA\to R$. If $\Phi:\TR\to\cA$ is the canonical $*$-homomorphism, then
\begin{equation}\label{e:tr}
\chi\circ \Phi=\Phi\circ\tr.
\end{equation}
We claim that $\tau(aa^\tau)\ge0$ for every $a\in\cA$. Indeed, if $\pi:\TR\to R\otimes_\phi \TR$ is the canonical $*$-homomorphism, then by Lemma \ref{l:tech} there exist a finite set $\{u_i\}_i$ of symmetric elements in $\TR$ and a finite set $\{v_j\}_j$ of antisymmetric elements in $\TR$ such that $\{\pi(u_i)\}_i\cup\{\pi(v_j)\}_j$ form an $R$-basis of $R\otimes_\phi\TR$ and
$$\phi(\tr(u_iu_{i'}))=\phi(\tr(v_jv_{j'}))=\phi(\tr(u_iv_j))=0$$
for all $i\neq i'$ and $j\neq j'$. If
$$a=\sum_i\alpha_i\Phi(u_i)+\sum_j\beta_j\Phi(v_j),\qquad \alpha_i,\beta_j\in R,$$
then
$$\chi(aa^\tau)=\sum_i\alpha_i^2\phi(\tr(u_iu_i^{\ti}))+\sum_j\beta_j^2\phi(\tr(v_jv_j^{\ti}))\ge0$$
by \eqref{e:tr}.

By Wedderburn's structure theorem we have
$$\cA=\cA_1\times\cdots \times\cA_\ell$$
for some finite-dimensional simple $R$-algebras $\cA_k$. Moreover, by \cite[Theorem 4.2]{DPRR} there exist $d_1,\dots,d_\ell\in\N$ such that
\begin{equation}\label{e:trdecomp}
\chi\left(\sum_k a_k\right)=\sum_k d_k\tr_{\cA_k}^R(a_k)
\end{equation}
for $a_k\in\cA_k$.

Next we show that $\tau$ is split, i.e., $(\cA_k)^\tau\subseteq\cA_k$ for $1\le k\le \ell$. Since every involution preserves centrally primitive idempotents \cite[Section 22]{Lam}, for every $k$ there exists $k'$ such that $(\cA_k)^\tau\subseteq\cA_{k'}$. Suppose that $\tau$ is not split and without loss of generality assume $(\cA_1)^\tau\subseteq\cA_2$. Let $e_1\in\cA_1$ and $e_2\in\cA_2$ be the identity elements, respectively. Then
\begin{align*}
\chi\big((e_1-e_2)(e_1-e_2)^\tau\big)
&=\chi\big((e_1-e_2)(e_2-e_1)\big) \\
&=\chi(-e_1-e_2) \\
&=-d_1\tr_{\cA_1}^R(e_1)-d_2\tr_{\cA_2}^R(e_2)<0,
\end{align*}
a contradiction.

Let $\tau_k$ be the restriction of $\tau$ on $\cA_k$. By \eqref{e:trdecomp} and the previous paragraph it follows that $\tr_{\cA_k}^R(aa^{\tau_k})\ge0$ and hence $\tr_{\cA_k}(aa^{\tau_k})\ge0$ for every $a\in \cA_k$, so $\tau_k$ is a positive involution.

Therefore the assumptions of Proposition \ref{p:posinv} are met and we obtain a trace preserving $*$-homomorphism $\Psi:\TR\to\opm_n(R)$ extending $\phi$. Now we define $\varphi:\R[\ulxi]\to R$ by
\[\varphi(\xi_{j\imath\jmath})=\Psi(\Xi_j)_{\imath\jmath}. \qedhere\]
\end{proof}

\begin{rem}
The condition $\phi(\TPc)\subseteq R_{\ge0}$ in Theorem \ref{t:hom} is clearly necessary since $\tr(hh^*)$ is a nonzero sum of squares in $\R[\ulxi]$ for every nonzero $h\in\TR$. Moreover, it is not vacuous. For example, let $\tau$ be the involution on $\HH$ defined by $u^\tau=iu^{\si}i^{-1}$ for $u\in\HH$, where $\si$ is the standard symplectic involution on $\HH$. Then $\tau$ is of orthogonal type and we have a trace preserving $*$-epimorphism $\Phi:\TT_2\to\HH$ defined by $\Phi(\Xi_1)=i$ and $\Phi(\Xi_2)=j$. Since $\Phi(T_2)=\R$, the restriction yields a homomorphism $\phi:T_2\to\R$ and $\phi(\tr(\Xi_2\Xi_2^{\ti}))=-1$.
\end{rem}

\begin{cor}\label{c:pslike}
Let $\phi:\cen=\R[\ulxi]^{\ort}\to R$ be an $\R$-algebra homomorphism into a real closed field $R\supseteq\R$. Then the following are equivalent:
\begin{enumerate}[label={\rm(\roman*)}]
	\item $\phi$ extends to an $\R$-algebra homomorphism $\varphi:\R[\ulxi]\to R$;
	\item $\phi(\R[\ulxi]^{\ort}\cap \sum \R[\ulxi]^2)\subseteq R_{\ge0}$;
	\item $\phi(\tr(hh^\ti))\in R_{\ge0}$ for all $h\in\TR$.
\end{enumerate}
\end{cor}

\begin{rem}\label{r:noPS}
At first glance one might ponder whether Theorem \ref{t:hom} could be derived from the Procesi-Schwarz theorem \cite{PS1}. In Appendix \ref{a:ps} we explain why this does not seem to be the case.
\end{rem}

\subsection{Stellens\"atze}

We are now ready to give the main result of this section, the Krivine-Stengle Positivstellensatz for trace polynomials $a$ that are positive (semidefinite) on $K_S$, see Theorem \ref{t:posss}. In the proof we use Corollary \ref{c:scal} to reduce the problem to the commutative ring $\cen[a]$. Before applying the abstract Positivstellensatz for commutative rings, we need the relation between orderings and matrix evaluations of trace polynomials that is given in Proposition \ref{p:ord} below, which is a crucial consequence of the extension Theorem \ref{t:hom} and Tarski's transfer principle.

\begin{prop}\label{p:ord}
Let $S\subset \cen$ be finite, $a\in\STR$ and $P$ an ordering in $\cen[a]$ containing $S\cup\TPc$.
\begin{enumerate}[label={\rm(\arabic*)}]
	\item $a|_{K_S}\succeq 0$ implies $a\in P$.
	\item $a|_{K_S}\succ 0$ implies $a\in P\setminus-P$.
	\item $a|_{K_S}= 0$ implies $a\in P\cap\ -P$.
\end{enumerate}
\end{prop}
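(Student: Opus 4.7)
The strategy is to convert the abstract ordering $P$ into a concrete matrix-tuple evaluation via Theorem \ref{t:hom}, and then read off the sign of $\psi(a)$ from the spectrum of $a(X)$.

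First, since $P$ is an ordering on the commutative ring $\cen[a]$, its support $\mathfrak{p}=P\cap(-P)$ is a prime ideal and $P$ descends to an ordering on $\mathrm{Frac}(\cen[a]/\mathfrak{p})$; taking a real closure $R\supseteq\R$ of the resulting ordered field I obtain a $\R$-algebra homomorphism $\psi:\cen[a]\to R$ with $\psi(P)\subseteq R_{\ge 0}$. The restriction $\phi=\psi|_{\cen}$ sends $\TPc\subseteq P$ into $R_{\ge 0}$, so by Theorem \ref{t:hom} it extends to $\varphi:\R[\ulxi]\to R$, i.e.\ to evaluation at some $X=(\varphi(\xi_{j\imath\jmath}))\in\opm_n(R)^g$ satisfying $c(X)=\phi(c)$ for every $c\in\cen$. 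For $s\in S\subset\cen$ the matrix $s(X)=\phi(s)\,I_n$ is scalar and nonnegative, so $X$ lies in the semialgebraic set $K_S^R:=\{Y\in\opm_n(R)^g\colon s(Y)\succeq 0\ \forall s\in S\}$ over $R$. Tarski's transfer principle, applied to the first-order statement ``for every $Y\in K_S^R$ the symmetric matrix $a(Y)$ is positive semidefinite'' (resp.\ positive definite, zero), then yields that $a(X)$ has the corresponding property in $\opm_n(R)$.

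It remains to link $\psi(a)\in R$ to the matrix $a(X)\in\opm_n(R)$. By Newton's identities, the coefficients of the characteristic polynomial of an $n\times n$ matrix are polynomials in its power-sum traces, so there is a monic $\chi_a(t)\in\cen[t]$ of degree $n$ with $\chi_a(a)=0$ in $\TR$ (Cayley--Hamilton). Applying $\psi$ gives $\chi_a^{\phi}(\psi(a))=0$, meaning $\psi(a)$ is a root of the characteristic polynomial of $a(X)$; since $a(X)$ is symmetric over the real closed field $R$, Tarski transfer of the real spectral theorem puts all its eigenvalues in $R$, so $\psi(a)$ must be one of them. The three conclusions now follow at once: $a(X)\succeq 0$ forces $\psi(a)\ge 0$ and hence $a\in P$; $a(X)\succ 0$ forces $\psi(a)>0$ and hence $a\in P\setminus(-P)$; $a(X)=0$ forces $\psi(a)=0$ and hence $a\in P\cap(-P)$.

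The main obstacle I anticipate is this last bridging step: relating the scalar $\psi(a)\in R$, produced from a purely commutative ordering-theoretic construction on $\cen[a]$, to the noncommutative object $a(X)$ that Theorem \ref{t:hom} supplies. Cayley--Hamilton with coefficients landing inside $\cen$ (rather than only inside $\TR$) is what makes the connection, and everything else in the argument is either routine Tarski transfer or heavy lifting already done in Theorem \ref{t:hom}.
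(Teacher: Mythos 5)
Your proof is correct and reaches the same destination through a closely related but not identical route. Both you and the paper begin with the same two ingredients: the ordering $P$ induces a homomorphism $\phi:\cen\to R$ into a real closed field $R$ with $\phi(\TPc)\subseteq R_{\ge 0}$, Theorem~\ref{t:hom} upgrades $\phi$ to a point evaluation $\varphi:\R[\ulxi]\to R$, and Tarski transfer is applied. The difference lies in the endgame. The paper applies Tarski coefficient by coefficient: it shows each $\sigma_j=\tr(\wedge^j a)$ satisfies $\phi(\sigma_j)\ge 0$, so $\sigma_j\in P$, and then reasons entirely inside the ordered ring $(\cen[a],P)$ using the Cayley--Hamilton identity $(-a)^n+\sum_j \sigma_j(-a)^{n-j}=0$ together with primeness of the support $P\cap(-P)$. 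You instead apply Tarski to the whole implication ``$Y\in K_S^R\Rightarrow a(Y)\succeq 0$'' to obtain $a(X)\succeq 0$ directly, then use Cayley--Hamilton to identify $\psi(a)$ as an eigenvalue of $a(X)$, reading off the sign from the spectrum. Your argument is a touch more semantic and transparent (``$\psi(a)$ is literally an eigenvalue of the matrix at the Tarski point''), whereas the paper's is more economical and stays formal, never needing the spectral theorem over $R$ nor the observation that $\chi_a^{\phi}$ is the characteristic polynomial of $a(X)$. Both hinge on the same nontrivial facts---Theorem~\ref{t:hom}, Tarski, and the fact that the characteristic coefficients of $a$ lie in $\cen$---so the intellectual content is essentially shared; the eigenvalue interpretation you supply is a genuine alternative for the closing step and would be a reasonable substitute for the published argument.
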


\begin{proof}
Let $P$ be an ordering in $\cen[a]$ containing $S$ and let $\sigma_j=\tr(\wedge^j a)\in\cen$ for $1\le j\le n$.

(1) The restriction of $P$ to $\cen$ gives rise to a real closed field $R$ and a homomorphism $\phi:\cen\to R$ satisfying $\phi(S\cup\TPc)\subseteq R_{\ge0}$. By Theorem \ref{t:hom} we extend it to a homomorphism $\varphi:\R[\ulxi]\to R$. Suppose that $\varphi(\sigma_j)<0$ for some $j$; in other words, there exist $\alpha\in R^{gn^2}$ such that $\sigma_j(\alpha)<0$ and $s'(\alpha)\ge0$ for all $s'\in S$. By Tarski's transfer principle \cite[Theorem 1.4.2]{Mar} there exists $\alpha'\in\R^{gn^2}$ such that $\sigma_j(\alpha')<0$ and $s'(\alpha')\ge0$ for all $s'\in S$. But this contradicts $\sigma_j|_{K_S}\ge 0$, which is a consequence of $s|_{K_S}\succeq 0$. Hence $\phi(\sigma_j)=\varphi(\sigma_j)\ge0$ for all $j$, so $\sigma_j\in P$ for all $j$. By the Cayley-Hamilton theorem we have
\begin{equation}\label{e:ch}
(-a)^n+\sum_{j=1}^n\sigma_j(-a)^{n-j}=0.
\end{equation}
Suppose $a\notin P$. Then $-a\in P$, so \eqref{e:ch} implies $(-a)^n\in P\cap- P$. Therefore $a\in P\cap -P$, a contradiction.

(2) Because $a|_{K_S}\succ 0$ implies $\sigma_j|_{K_S}> 0$, we obtain $\sigma_j\in P\setminus -P$ for all $j$ by the same reasoning as in (1). If $a\notin P\setminus -P$, then $-a\in P$, so \eqref{e:ch} implies $\sigma_n\in P\cap- P$, a contradiction.

(3) If $a|_{K_S}= 0$, then $a|_{K_S}\succeq0$ and $-a|_{K_S}\succeq0$, so $a\in P\cap -P$ by (1).
\end{proof}

\begin{thm}[Krivine-Stengle Positivstellensatz for trace polynomials]\label{t:posss}
Let $S\cup\{a\}\subset \STR$ be finite.
\begin{enumerate}[label={\rm(\arabic*)}]
	\item $a|_{K_S}\succeq 0$ if and only if $at_1=t_1a=a^{2k}+t_2$ for some $t_1,t_2\in\cT^{\tr}_S$ and $k\in\N$.
	\item $a|_{K_S}\succ 0$ if and only if $at_1=t_1a=1+t_2$ for some $t_1,t_2\in\cT^{\tr}_S$.
	\item $a|_{K_S}= 0$ if and only if $-a^{2k}\in\cT^{\tr}_S$ for some $k\in\N$.
\end{enumerate}
\end{thm}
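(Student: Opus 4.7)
For the easy implication ($\Leftarrow$) I would fix $X\in K_S$ and use that every element of the cyclic preordering $\cT^{\tr}_S$ is positive semidefinite at $X$ (the $\TR$-analogue of the Proposition from the introduction, which follows by a direct induction on the generators of $\cT^{\tr}_S$). Then $a(X)$ and $t_1(X)$ are commuting real symmetric matrices, hence simultaneously orthogonally diagonalizable. Reading the given identity eigenvector by eigenvector, with $\lambda=$ eigenvalue of $a(X)$, $\mu\ge0$ of $t_1(X)$, and $\tau\ge0$ coming from $t_2(X)$, I obtain $\lambda\mu=\lambda^{2k}+\tau$ in case (1), forcing $\lambda\ge0$; $\lambda\mu=1+\tau>0$ in case (2), forcing $\lambda>0$; and $-\lambda^{2k}\ge0$ in case (3), forcing $\lambda=0$.

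For the hard implication I would follow the outline after Theorem~\ref{ta:ks}: reduce to a commutative scalar Positivstellensatz. By Corollary~\ref{c:scal}, I may replace $S$ by a finite subset $S'\subset\tr(\cQ^{\tr}_S)\cap\cen$ with $K_{S'}=K_S$; then $S'\subset\cT^{\tr}_S\cap\cen$. I then work inside the commutative subring $\cen[a]\subseteq\TR$ and consider the ordinary (commutative) preordering $\tilde T\subseteq\cen[a]$ generated by $S'$ together with a set of generators of $\TPc$.

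The crucial compatibility is the containment $\tilde T\subseteq\cT^{\tr}_S$. Indeed, any $p\in\cen[a]$ is symmetric in $\TR$ (since $a^{\ti}=a$ and the coefficients of $p$ lie in $\cen$), so $p^2=pp^{\ti}\in\TP\subseteq\cT^{\tr}_S$. Moreover all generators of $\tilde T$ lie in the center $\cen$, so a typical summand $p^2\prod_i g_i$ of an element of $\tilde T$ can be rewritten in $\TR$ as $p\bigl(\prod_i g_i\bigr)p^{\ti}$; the central factor $\prod_i g_i$ belongs to $\cT^{\tr}_S\cap\cen$ (which is multiplicatively closed by definition of a cyclic preordering), and sandwiching by $p$ keeps it inside $\cT^{\tr}_S$. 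Summing over a sum-of-squares decomposition yields $\tilde T\subseteq\cT^{\tr}_S$.

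With this inclusion at hand, Proposition~\ref{p:ord} applied to $S'$ (legitimate because $K_{S'}=K_S$) shows in case~(1) that every ordering $P$ of $\cen[a]$ containing $\tilde T\supseteq S'\cup\TPc$ must contain $a$, and analogously $a\in P\setminus -P$ in case~(2) and $a\in P\cap -P$ in case~(3). The abstract Krivine-Stengle Positivstellensatz for commutative rings \cite[Theorem~2.5.2]{Mar} applied to $(\cen[a],\tilde T)$ then delivers the certificates $at_1=a^{2k}+t_2$, $at_1=1+t_2$, respectively $-a^{2k}\in\tilde T$, with $t_1,t_2\in\tilde T\subseteq\cT^{\tr}_S$; the symmetry $at_1=t_1a$ is automatic because $\cen[a]$ is commutative. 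I expect the real work to be already packaged in the imported ingredients, namely the central reduction (Corollary~\ref{c:scal}) and the ordering-evaluation correspondence (Proposition~\ref{p:ord}, which itself rests on the extension Theorem~\ref{t:hom}); given these, the remaining step is essentially bookkeeping, with the only subtle point being the verification of $\tilde T\subseteq\cT^{\tr}_S$ carried out above.
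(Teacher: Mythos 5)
Your proposal follows the same route as the paper: reduce to central constraints via Corollary~\ref{c:scal}, pass to the commutative preordering in $\cen[a]$ generated by $S'\cup\TPc$, check the inclusion into $\cT^{\tr}_S$, then invoke Proposition~\ref{p:ord} together with the abstract Krivine-Stengle Positivstellensatz \cite[Theorem~2.5.2]{Mar}. The paper states the inclusion $\tilde T\subseteq\cT^{\tr}_S$ and the easy implications without elaboration; you fill in both (the sum-of-hermitian-squares rewriting $p^2\prod_i g_i=p(\prod_i g_i)p^{\ti}$, and the simultaneous-diagonalization eigenvalue check), which are correct.
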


\begin{proof}
The directions $(\Leftarrow)$ are straightforward. For the implications $(\Rightarrow)$, by Corollary \ref{c:scal} we can assume that $S\subset\cen$. Let $T$ be the preordering in $\cen[a]$ generated by $S\cup\TPc$. Note that $T\subset \cT^{\tr}_S$ since $S\subset\cen$. If $a|_{K_S}\succeq 0$, then $a\in P$ for every ordering $P$ of $\cen[a]$ containing $T$ by Proposition \ref{p:ord}. Therefore $t_1a=a^{2k}+t_2$ for some $t_1,t_2\in T$ and $k\in\N$ by the abstract Positivstellensatz \cite[Theorem 2.5.2]{Mar}, so (1) is holds. (2) and (3) are proved analogously.
\end{proof}

\begin{rem}
In general we cannot choose a central $t_1$ in Theorem \ref{t:posss}; see Example \ref{exa:cent}.
\end{rem}

\begin{rem}
A clean Krivine-Stengle Positivstellensatz for generic matrices clearly does not exist for $n=3$ due to Theorem \ref{t:ps3}. Moreover, in Example \ref{exa:gm2} we show that even for $n=2$, where Conjecture \ref{co:ps} holds, the traceless equivalent of Theorem \ref{t:posss} for $\GM$ fails.
\end{rem}

\begin{cor}\label{c:empty}
If $S\subset\STR$ is finite, then $K_S=\emptyset$ if and only if $-1\in\cT^{\tr}_S$.
\end{cor}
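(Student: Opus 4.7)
The plan is to derive this directly as a corollary of Theorem \ref{t:posss}. Both directions are short.

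For the direction $(\Leftarrow)$, suppose $-1 \in \cT^{\tr}_S$. Since $\cT^{\tr}_S$ is a cyclic preordering generated by $S$, the informal analogue of the proposition about cyclic preorderings (applied in $\TR$) shows that every element of $\cT^{\tr}_S$ is positive semidefinite on $K_S$. Concretely, by Lemma \ref{l:gen}(2) (combined with part (3)), every element of $\cT^{\tr}_S$ is a finite sum of terms of the form $q_1$, $h_1 s_1 h_1^{\ti}$, or $\tr(h_2 s_2 h_2^{\ti}) q_2$, each of which evaluates to a positive semidefinite matrix on $K_S$. Hence if $-1 \in \cT^{\tr}_S$ and $K_S$ is nonempty, evaluating at any $X \in K_S$ yields $-I_n \succeq 0$, a contradiction.

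For the direction $(\Rightarrow)$, apply Theorem \ref{t:posss}(2) with the choice $a = -1$. Since $K_S = \emptyset$, the condition $a|_{K_S} \succ 0$ holds vacuously. The theorem then provides $t_1, t_2 \in \cT^{\tr}_S$ with
\[
(-1) t_1 = t_1 (-1) = 1 + t_2,
\]
which rearranges to $-1 = t_1 + t_2$. Since $\cT^{\tr}_S$ is closed under addition, $-1 \in \cT^{\tr}_S$.

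There is no real obstacle here; the entire content is the preceding Positivstellensatz. The only thing worth checking carefully is that Theorem \ref{t:posss}(2) genuinely applies to a vacuous hypothesis, i.e., that its proof does not implicitly assume $K_S \neq \emptyset$. Inspecting the proof of Theorem \ref{t:posss}, the argument passes through Proposition \ref{p:ord} and the abstract commutative Positivstellensatz, neither of which requires $K_S \neq \emptyset$; for an empty $K_S$ the certificate simply reflects the fact that the preordering generated by $S \cup \TPc$ in $\cen[a]$ is improper (equals the whole ring), which is exactly what produces $-1 \in \cT^{\tr}_S$.
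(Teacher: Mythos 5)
Your proof is correct and follows the same route as the paper: apply Theorem \ref{t:posss}(2) with $a=-1$ (vacuously positive on the empty set) to get $(-1)t_1 = 1+t_2$, hence $-1 = t_1+t_2 \in \cT^{\tr}_S$; the converse is the easy soundness direction. Your extra check that the Positivstellensatz's proof does not secretly assume $K_S\neq\emptyset$ is a sensible precaution, though the paper takes it for granted.
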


\begin{proof}
If $K_S=\emptyset$, then $-1|_{K_S}\succ0$, so by Theorem \ref{t:posss} there exist $t_1,t_2\in \cT^{\tr}_S$ such that $(-1)t_1=1+t_2$, so $-1=t_1+t_2\in\cT^{\tr}_S$. The converse is trivial.
\end{proof}

\begin{cor}\label{c:empty1}
Let $s\in\STR$. Then $s(A)\not\succeq0$ for all $A\in\mat^g$ if and only if
$$-1=\sum_i\omega_i\prod_j\tr(h_{ij}sh_{ij}^{\ti})$$
for some $\omega_i\in\TPc$ and $h_{ij}\in\TR$.
\end{cor}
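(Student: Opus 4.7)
My plan is to derive this from Corollary \ref{c:empty} applied to $S=\{s\}$, combined with Lemma \ref{l:gen} to unfold $\cT^{\tr}_{\{s\}}$, and some bookkeeping after applying the trace to kill the noncentral parts.

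First I would note that the hypothesis ``$s(A)\not\succeq 0$ for all $A\in\mat^g$'' is precisely $K_{\{s\}}=\emptyset$, which by Corollary \ref{c:empty} is equivalent to $-1\in\cT^{\tr}_{\{s\}}$. The real task is therefore to convert this membership into the special shape
$$-1 = \sum_i\omega_i\prod_j\tr(h_{ij}sh_{ij}^{\ti}),\qquad \omega_i\in\TPc,\ h_{ij}\in\TR.$$
The converse direction is immediate: an expression of this form is manifestly an element of $\cT^{\tr}_{\{s\}}$, and evaluating at any putative $A\in K_{\{s\}}$ would force $-1\ge 0$, a contradiction; hence $K_{\{s\}}=\emptyset$.

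For the forward direction, I would invoke Lemma \ref{l:gen}(3) to rewrite $\cT^{\tr}_{\{s\}}=\cQ^{\tr}_{S'}$ with $S'=\{s\}\cup\{\prod_j\tr(h_jsh_j^{\ti})\colon h_j\in\TR\}$, and then use Lemma \ref{l:gen}(2) to decompose
$$-1 \;=\; q_1 + \sum_\alpha h_\alpha s'_\alpha h_\alpha^{\ti} + \sum_\beta \tr(h_\beta s'_\beta h_\beta^{\ti})\,q_\beta,$$
with $q_1,q_\beta\in\TP$, $h_\alpha,h_\beta\in\TR$, $s'_\alpha,s'_\beta\in S'$. Applying the trace and using $\tr(\TP)=\TPc$ turns this into the central equation
$$-n \;=\; \tr(q_1) + \sum_\alpha \tr(h_\alpha s'_\alpha h_\alpha^{\ti}) + \sum_\beta \tr(h_\beta s'_\beta h_\beta^{\ti})\,\tr(q_\beta).$$

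Now I would check term by term that each summand already has the target shape $\omega\cdot\prod_j\tr(h_js h_j^{\ti})$ with $\omega\in\TPc$ (allowing the empty product, which equals $1$): traces of $\TP$-elements are in $\TPc$; if $s'=s$ then $\tr(hsh^{\ti})$ is itself a generator (with $\omega=1$); and if $s'=\prod_j\tr(h_jsh_j^{\ti})$ is one of the central generators of $S'$, then since $s'\in\cen$ one has $\tr(hs'h^{\ti}) = \tr(hh^{\ti})\cdot s' = \tr(hh^{\ti})\prod_j\tr(h_jsh_j^{\ti})$, i.e., a $\TPc$-weight times a product of the desired generators. Further multiplication by $\tr(q_\beta)\in\TPc$ is absorbed into the $\omega$-coefficient because $\TPc$ is closed under multiplication. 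Hence $-n$ lies in the claimed set $M$. Finally, since $\frac{1}{n}\in\TPc$ (as a sum of squares of real scalars, e.g.\ $\tr(\tfrac{1}{n}\cdot\tfrac{1}{n})=\tfrac{1}{n}$), multiplying the whole identity by $\frac{1}{n}$ yields $-1$ in the required form.

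The only mildly delicate step is the bookkeeping for the ``mixed'' generators $s'\in S'$ that are themselves products of traces; once one observes $s'\in\cen$, the identity $\tr(hs'h^{\ti})=s'\tr(hh^{\ti})$ cleanly splits the term into a $\TPc$-weight and a product of generators. Everything else is formal.
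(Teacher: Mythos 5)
Your proof is correct and follows exactly the route the paper intends (the paper's proof is the one-liner ``Follows by Corollary~\ref{c:empty} and Lemma~\ref{l:gen}''), and your careful term-by-term bookkeeping with the central generators of $S'$ is sound. One tiny streamlining: instead of taking traces to land on $-n$ and then dividing by $n$, you can invoke Lemma~\ref{l:gen}(1) directly ($\tr(\cQ)=\cQ\cap\cen$) to write $-1=\tr(q)$ for some $q\in\cT^{\tr}_{\{s\}}$ and then unfold $q$ via Lemma~\ref{l:gen}(2)--(3); this avoids the $\tfrac1n\in\TPc$ step, though your version is also fine since $\tfrac1n=(\tfrac1{\sqrt n})^2\in\TPc$. (Note that, as you observe, the $q_1$ term forces one to read the statement as allowing the empty product over $j$.)
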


\begin{proof}
Follows by Corollary \ref{c:empty} and Lemma \ref{l:gen}.
\end{proof}

\begin{cor}[Real Nullstellensatz for trace polynomials]\label{c:renull}
Let $\cJ\subset\TR$ be an ideal and assume $\tr(\cJ)\subseteq\cJ$. For $h\in\TR$ the following are equivalent:
\begin{enumerate}[label={\rm(\roman*)}]
	\item for every $X\in\mat^g$, $u(X)=0$ for every $u\in\cJ$ implies $h(X)=0$;
	\item there exists $k\in\N$ such that $-(h^{\ti}h)^k\in \TP+\cJ$.
\end{enumerate}
\end{cor}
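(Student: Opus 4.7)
The direction (ii) $\Rightarrow$ (i) will be a direct evaluation argument: if $-(h^{\ti}h)^k = q + u$ with $q \in \TP$ and $u \in \cJ$, then at any $X \in \opm_n(\R)^g$ annihilating $\cJ$ one has $-(h^{\ti}h)^k(X) = q(X)$. The right-hand side is positive semidefinite because elements of $\TP$ evaluate to nonnegative combinations of hermitian squares, while $(h^{\ti}h)^k(X)$ is a power of the PSD matrix $h(X)^{\ti}h(X)$ and is therefore itself PSD. Both sides must then vanish, and positivity of $h(X)^{\ti}h(X)$ forces $h(X) = 0$.

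For (i) $\Rightarrow$ (ii) I would encode $\cJ$ as a semialgebraic constraint and invoke the equality version of Theorem~\ref{t:posss}. Noetherianity of $\TR$ (property~(c) of Subsection~\ref{ss:gentr}) allows choosing generators $u_1, \dots, u_m$ of $\cJ$; set $S = \{-u_i u_i^{\ti} : 1 \le i \le m\} \subset \STR$. The condition $-u_i(X) u_i(X)^{\ti} \succeq 0$ is equivalent to $u_i(X) = 0$, so $K_S$ coincides with the common zero locus of $\cJ$, and (i) is equivalent to $(h^{\ti}h)|_{K_S} = 0$. Applying Theorem~\ref{t:posss}(3) to the symmetric element $a = h^{\ti}h$ then yields $\ell \in \N$ with $-(h^{\ti}h)^{2\ell} \in \cT^{\tr}_S$.

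The crux of the argument, and the only place where the hypothesis $\tr(\cJ) \subseteq \cJ$ is used, is verifying $\cT^{\tr}_S \subseteq \TP + \cJ$. By Lemma~\ref{l:gen}(3), $\cT^{\tr}_S = \cQ^{\tr}_{S'}$ where $S' = S \cup \{\prod_i \tr(h_i s_i h_i^{\ti}) : h_i \in \TR, s_i \in S\}$, and Lemma~\ref{l:gen}(2) expresses a general element of this cyclic quadratic module as a sum of terms of the form $q_1 \in \TP$, $h_1 s' h_1^{\ti}$, and $\tr(h_2 s' h_2^{\ti}) q_2$ with $s' \in S'$. Since $S \subset \cJ$ and $\cJ$ is a two-sided ideal, each trace $\tr(h_i s_i h_i^{\ti})$ lies in $\tr(\cJ) \subseteq \cJ$, so products of such traces lie in $\cJ$; hence $S' \subset \cJ$, and the terms $h_1 s' h_1^{\ti}$ and $\tr(h_2 s' h_2^{\ti}) q_2$ are absorbed by the ideal. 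This yields $-(h^{\ti}h)^{2\ell} \in \TP + \cJ$, which is (ii) with $k = 2\ell$. The only subtlety is bookkeeping: without the cyclicity hypothesis $\tr(\cJ) \subseteq \cJ$ the trace-product generators of $S'$ could escape $\cJ$ and the absorption step would fail, so this hypothesis is precisely what makes the abstract positivity certificate from Theorem~\ref{t:posss} land in $\TP + \cJ$ rather than a larger module.
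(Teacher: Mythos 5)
Your proof is correct and follows essentially the same route as the paper: encode $\cJ$ via Noetherian generators $u_i$, set $S=\{-u_iu_i^{\ti}\}$ (the paper uses $-u_i^{\ti}u_i$, which is equivalent), apply Theorem~\ref{t:posss}(3) to $a=h^{\ti}h$, and check $\cT^{\tr}_S\subseteq\TP+\cJ$ using $\tr(\cJ)\subseteq\cJ$. You simply spell out in full the absorption step and the easy direction that the paper leaves as ``clear.''
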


\begin{proof}
The implication $(2)\Rightarrow(1)$ is clear. Conversely, $\TR$ is Noetherian, so $\cJ$ is (as a left ideal) generated by some $u_1,\dots,u_\ell\in \TR$. Let $S=\{-u_1^{\ti}u_1,\dots,-u_\ell^{\ti}u_\ell \}$; then (1) is equivalent to $h^{\ti}h|_{K_S}=0$. Hence $(1)\Rightarrow(2)$ follows by Theorem \ref{t:posss}(3) and $\cT^{\tr}_S\subseteq \TP+\cJ$.
\end{proof}

\begin{cor}\label{c:renull1}
Let $S\subset\TR$. Then
$$\{A\in\mat^g\colon s(A)=0\ \forall s\in S\}=\emptyset$$
if and only if
$$-1=\omega+\sum_i\tr(h_is_i)$$
for some $\omega\in\TPc$, $h_i\in\TR$ and $s_i\in S$.
\end{cor}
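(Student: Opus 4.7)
The plan is to invoke Corollary \ref{c:renull} with $h=1$ and then project the resulting identity into $\cen$ by applying $\frac{1}{n}\tr$ to both sides. The converse is immediate: if $A \in \mat^g$ satisfied $s(A)=0$ for every $s \in S$, then evaluating the alleged identity would yield $-1 = \omega(A) \ge 0$, which is impossible since $\omega \in \TPc$ evaluates to a nonnegative real number.

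For the forward direction, I would set $\cJ = \TR\, S\, \TR + \tr(\TR\, S) \cdot \TR$. This is a two-sided ideal of $\TR$ containing $S$, and it is closed under $\tr$ thanks to cyclicity of $\tr$ and the centrality of pure traces: $\tr(a s b) = \tr(b a s) \in \tr(\TR S)$, while $\tr(\tr(c s)\, d) = \tr(cs)\tr(d) = \tr(\tr(d) c\cdot s) \in \tr(\TR S)$. Since $\{A : u(A)=0\ \forall u \in \cJ\} \subseteq \{A : s(A)=0\ \forall s \in S\} = \emptyset$, hypothesis (i) of Corollary \ref{c:renull} is vacuously satisfied with $h=1$, giving $-1 = -(1\cdot 1)^k \in \TP + \cJ$. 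Hence $-1 = q + u$ for some $q \in \TP$ and $u \in \cJ$.

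To reach a central identity I would apply $\frac{1}{n}\tr$. On the $\TP$-side, writing $q = \sum_j \omega_j h_j h_j^{\ti}$ with $\omega_j \in \TPc$ and $h_j \in \TR$, each summand transforms as $\frac{1}{n}\omega_j\tr(h_j h_j^{\ti}) = \omega_j \tr\!\bigl((\tfrac{1}{\sqrt n} h_j)(\tfrac{1}{\sqrt n} h_j)^{\ti}\bigr)$, which is a product of elements of $\TPc$; thus $\frac{1}{n}\tr(q) \in \TPc$. On the ideal side, expanding $u = \sum_i a_i s_i b_i + \sum_j \tr(c_j t_j)\, d_j$ with $s_i, t_j \in S$ and using cyclicity and centrality gives
\[
\tr(u) \;=\; \sum_i \tr(b_i a_i \cdot s_i) + \sum_j \tr(c_j t_j)\tr(d_j) \;=\; \sum_i \tr(b_i a_i \cdot s_i) + \sum_j \tr\!\bigl(\tr(d_j) c_j \cdot t_j\bigr),
\]
which is of the form $\sum_k \tr(h_k' s_k')$ with $h_k' \in \TR$, $s_k' \in S$. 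Absorbing the factor $\frac{1}{n}$ into each $h_k'$ and combining with $\omega := \frac{1}{n}\tr(q) \in \TPc$ yields the desired decomposition $-1 = \omega + \sum_k \tr(h_k s_k)$.

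The only mildly subtle step is handling the $\tr(\TR S)\cdot\TR$ piece of $\cJ$: taking $\tr$ there initially produces \emph{products} of pure traces rather than a single trace in the required form. The centrality-based rewrite $\tr(cs)\tr(d) = \tr(\tr(d) c\cdot s)$ absorbs the extra $\tr(d)$ into a new left multiplier, bringing the expression back into the single-trace shape $\tr(h\cdot s)$ demanded by the statement.
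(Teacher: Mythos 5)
Your proof is correct. The paper states Corollary~\ref{c:renull1} without proof, but your argument is the natural derivation from Corollary~\ref{c:renull}: form the smallest trace-closed two-sided ideal $\cJ=\TR\, S\, \TR + \tr(\TR S)\TR$ containing $S$, observe that its vanishing set coincides with that of $S$, apply Corollary~\ref{c:renull} with $h=1$ to obtain $-1\in\TP+\cJ$, and then push the identity into $\cen$ via $\tfrac{1}{n}\tr$. The verification that $\tr(\cJ)\subseteq\cJ$, the absorption of the scalar $\tfrac{1}{n}$ into the multipliers (so that $\tfrac{1}{n}\tr(\TP)\subseteq\TPc$), and the centrality rewrite $\tr(cs)\tr(d)=\tr(\tr(d)c\cdot s)$ that brings the $\tr(\TR S)\TR$ piece back to the single-trace form are all handled correctly, and the easy converse is fine.
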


We mention that Hilbert's Nullstellensatz for $n\times n$ generic matrices over an algebraically closed field is given by Amitsur in \cite[Theorem 1]{Ami}.


\section{Positivstellens\"atze for compact semialgebraic sets}\label{s:comp}

In this section we give certificates for positivity on compact semialgebraic sets. We prove a version of Schm\"udgen's theorem \cite{Schm} for trace polynomials (Theorem \ref{t:sch}). We also present a version of Putinar's theorem \cite{Put} for trace polynomials (Theorem \ref{t:arch}) and for generic matrices (Theorem \ref{t:arch1}).

\subsection{Archimedean (cyclic) quadratic modules}

A cyclic quadratic module $\cQ\subset\TR$ is {\bf archimedean} if for every $h\in\TR$ there exists $\rho\in\Q_{>0}$ such that $\rho-hh^{\ti}\in\cQ$. Equivalently, for every $s\in \STR$ there exists $\varepsilon\in\Q_{>0}$ such that $1\pm\varepsilon s\in \cQ$.

For a cyclic quadratic module $\cQ$ let $H_{\cQ}$ be the set of elements $h\in\TR$ such that $\rho-hh^{\ti}\in\cQ$ for some $\rho\in\Q_{>0}$. It is clear that $\cQ$ is archimedean if and only if $H_{\cQ}=\TR$.

\begin{prop}\label{p:hq}
$H_{\cQ}$ is a trace $*$-subalgebra over $\R$ in $\TR$.
\end{prop}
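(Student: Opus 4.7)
The plan is to verify closure of $H_{\cQ}$ under each of the operations defining a trace $*$-subalgebra over $\R$ in $\TR$: containment of $\R$, sums, products, involution, and trace. In each case, given $h_i\in H_{\cQ}$ with certificates $\rho_i-h_ih_i^{\ti}\in\cQ$ and $\rho_i\in\Q_{>0}$, I will construct an analogous certificate for the derived element, using only the four axioms of a cyclic quadratic module.

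The routine closures rely on standard identities. Containment of $\R$ is immediate: for $r\in\R$ pick $\rho\in\Q_{>0}$ with $\rho>r^2$, and then $\rho-r^2=\sqrt{\rho-r^2}\cdot 1\cdot\sqrt{\rho-r^2}\in\cQ$. Closure under addition follows from the parallelogram identity
\[
2\rho_1+2\rho_2-(h_1+h_2)(h_1+h_2)^{\ti}=2(\rho_1-h_1h_1^{\ti})+2(\rho_2-h_2h_2^{\ti})+(h_1-h_2)(h_1-h_2)^{\ti}\in\cQ,
\]
and closure under multiplication from
\[
\rho_1\rho_2-(h_1h_2)(h_1h_2)^{\ti}=\rho_2(\rho_1-h_1h_1^{\ti})+h_1(\rho_2-h_2h_2^{\ti})h_1^{\ti}\in\cQ.
\]
For the trace, note that $\tr(h)\in\cen$ is $\ti$-fixed, since the generators $\tr(w)$ of $\cen$ satisfy $\tr(w)=\tr(w^{\ti})$ as matrix identities; hence $\tr(h)\tr(h)^{\ti}=\tr(h)^2$. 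The completing-the-square identity
\[
n\tr(hh^{\ti})-\tr(h)^2=n\,\tr\Bigl(\bigl(h-\tfrac{1}{n}\tr(h)\bigr)\bigl(h-\tfrac{1}{n}\tr(h)\bigr)^{\ti}\Bigr)\in\cQ,
\]
added to $n$ times $\tr(\rho-hh^{\ti})=n\rho-\tr(hh^{\ti})\in\cQ$, yields $n^2\rho-\tr(h)^2\in\cQ$, so $\tr(h)\in H_{\cQ}$.

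The step I expect to be the conceptual obstacle is closure under involution: the definition of $H_{\cQ}$ is asymmetric, and passing from a bound on $hh^{\ti}$ to one on $h^{\ti}h$ is automatic in a $C^*$-algebra (the two one-sided operator norms coincide) but must here be realized by an algebraic identity inside $\cQ$. My plan combines two ingredients. By the axiom $h^{\ti}\cQ h\subseteq\cQ$,
\[
h^{\ti}(\rho-hh^{\ti})h=\rho\,h^{\ti}h-(h^{\ti}h)^2\in\cQ;
\]
since $\rho-h^{\ti}h$ is symmetric, the hermitian square
\[
(\rho-h^{\ti}h)(\rho-h^{\ti}h)^{\ti}=\rho^2-2\rho\,h^{\ti}h+(h^{\ti}h)^2
\]
also lies in $\cQ$. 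Adding the two elements cancels the $(h^{\ti}h)^2$ terms and gives $\rho^2-\rho\,h^{\ti}h\in\cQ$; scaling by $\rho^{-1}\in\R_{>0}$ (which preserves $\cQ$, as $\rho^{-1}q=\rho^{-1/2}\cdot q\cdot\rho^{-1/2}\in\cQ$) yields the desired certificate $\rho-h^{\ti}h\in\cQ$, so $h^{\ti}\in H_{\cQ}$ with the same bound $\rho$.
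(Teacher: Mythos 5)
Your proof is correct, but it takes a more self-contained route than the paper's. The paper dispatches the $*$-subalgebra structure of $H_{\cQ}$ (closure under $\R$, $+$, $\cdot$, and $\ti$) in one line by citing Vidav's work on $*$-regular rings, and then devotes the entire argument to the trace closure, which it handles quite differently from you: it sets $s=h+h^{\ti}$, chooses $\rho$ with $\rho^2-s^2\in\cQ$, derives $\rho\pm s\in\cQ$ from the identity $\rho\pm s=\frac{1}{2\rho}\bigl((\rho\pm s)^2+(\rho^2-s^2)\bigr)$, applies $\tr$ to get $n\rho\pm\tr(s)\in\cQ$, and then multiplies these two certificates to obtain $(n\rho)^2-\tr(s)^2\in\cQ$, whence $\tr(h)=\tfrac12\tr(s)\in H_{\cQ}$. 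Your completing-the-square identity $n\tr(hh^{\ti})-\tr(h)^2=n\,\tr\bigl((h-\tfrac1n\tr(h))(h-\tfrac1n\tr(h))^{\ti}\bigr)$ accomplishes the same thing more directly, without passing through $s$ or through the $\pm$ gymnastics. More interestingly, your explicit treatment of closure under the involution --- pairing the conjugated certificate $h^{\ti}(\rho-hh^{\ti})h=\rho h^{\ti}h-(h^{\ti}h)^2$ against the hermitian square $(\rho-h^{\ti}h)^2$ to cancel the quartic and recover $\rho-h^{\ti}h$ --- makes transparent a point the paper hides behind the reference: the definition of $H_{\cQ}$ uses $hh^{\ti}$ and so is a priori one-sided, and the two one-sided notions agree for purely algebraic reasons. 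Both arguments buy the same proposition; yours is longer but has no external dependency and isolates the genuinely nontrivial step, the paper's is terser but opaque at exactly that step.
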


\begin{proof}
$H_{\cQ}$ is a $*$-subalgebra over $\R$ by \cite{Vid}. Let $h\in H_{\cQ}$. Then $s=h+h^{\ti}\in H_{\cQ}$ and let $\rho\in\Q_{>0}$ be such that $\rho^2-s^2\in\cQ$. Then
$$\rho\pm s=\frac{1}{2\rho}\left((\rho\pm s)^2+(\rho^2-s^2)\right)\in\cQ$$
and consequently $n\rho\pm\tr(s)\in\cQ$. Therefore
$$(n\rho)^2-\tr(s)^2=\frac{1}{2n\rho}\big((\rho-s)(\rho+s)(\rho-s)+(\rho+s)(\rho-s)(\rho+s)\big)\in\cQ,$$
so $\tr(h)=\frac12 \tr(s)\in H_{\cQ}$.
\end{proof}

\begin{cor}\label{c:arch1}
A cyclic quadratic module $\cQ$ is archimedean if and only if there exists $\rho\in\Q_{>0}$ such that $\rho-\sum_j\Xi_j\Xi_j^{\ti}\in\cQ$.
\end{cor}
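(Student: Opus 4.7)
My plan is to derive this as a direct corollary of Proposition \ref{p:hq}; the backward direction is where Proposition \ref{p:hq} does all the work, and the forward direction is a routine unpacking of the definition.

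For the forward implication, I would simply apply the archimedean property to each $\Xi_j \in \TR$ separately to produce $\rho_j \in \Q_{>0}$ with $\rho_j - \Xi_j \Xi_j^{\ti} \in \cQ$, and then sum: since $\cQ$ is closed under addition, $\rho - \sum_j \Xi_j \Xi_j^{\ti} \in \cQ$ for $\rho = \sum_j \rho_j \in \Q_{>0}$.

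For the backward implication, assume $\rho - \sum_j \Xi_j \Xi_j^{\ti} \in \cQ$. The first observation is that each $\Xi_i \Xi_i^{\ti} = \Xi_i \cdot 1 \cdot \Xi_i^{\ti}$ lies in $\cQ$ (since $1 \in \cQ$ and $h \cQ h^{\ti} \subseteq \cQ$), so for each fixed $j$ I can peel off the other hermitian squares to get
$$\rho - \Xi_j \Xi_j^{\ti} = \left(\rho - \sum_i \Xi_i \Xi_i^{\ti}\right) + \sum_{i \neq j} \Xi_i \Xi_i^{\ti} \in \cQ,$$
which means $\Xi_j \in H_{\cQ}$. Now I invoke Proposition \ref{p:hq}: $H_{\cQ}$ is a trace $*$-subalgebra of $\TR$ over $\R$, hence closed under $\R$-linear combinations, products, involution, and (crucially) traces. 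Since $\TR$ is generated as a trace $*$-algebra over $\R$ by $\Xi_1, \ldots, \Xi_g$, this forces $H_{\cQ} = \TR$, which is precisely the definition of $\cQ$ being archimedean.

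There is essentially no obstacle: all the substance is contained in Proposition \ref{p:hq}, especially the nontrivial fact that $H_{\cQ}$ is closed under traces (which rules out, a priori, that bounding only the $\Xi_j$ leaves traces like $\tr(\Xi_j)$ unbounded). The role of this corollary is purely practical: to certify archimedeanity of a cyclic quadratic module one need only exhibit a single explicit scalar bound on the symmetric element $\sum_j \Xi_j \Xi_j^{\ti}$ instead of bounding every element of $\TR$.
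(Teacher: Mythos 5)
Your proof is correct and follows the same route as the paper: both directions hinge on Proposition \ref{p:hq}, with the backward direction deducing $\Xi_j\in H_{\cQ}$ from the given bound and then concluding $H_{\cQ}=\TR$ because $\TR$ is generated by the $\Xi_j$ as a trace $*$-algebra over $\R$. You merely make explicit the small step (adding back the other hermitian squares $\Xi_i\Xi_i^{\ti}\in\cQ$ to peel off a bound on a single $\Xi_j$) that the paper leaves implicit.
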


\begin{proof}
$(\Rightarrow)$ is trivial. Conversely, $\rho-\sum_j\Xi_j\Xi_j^{\ti}\in\cQ$ implies $\Xi_j\in H_{\cQ}$ for $1\le j\le n$, so $H_{\cQ}=\TR$ since $\TR$ is generated by $\Xi_j$ as a trace $*$-subalgebra over $\R$ by Proposition \ref{p:hq}.
\end{proof}

It is easy to see that $K_{\cQ}$ is compact if $\cQ$ is archimedean. The converse fails already with $n=1$ (\cite[Section 7.3]{Mar} or \cite[Example 6.3.1]{PD}). If $K_{\cQ}$ is compact, say $\|X\|\le N$ for all $X\in K_{\cQ}$, then we can add $N^2-\sum_j\Xi_j\Xi_j^{\ti}$ to $\cQ$ to make it archimedean without changing $K_{\cQ}$.

\subsection{Schm\"udgen's Positivstellensatz for trace polynomials}

In this subsection we prove a version of Schm\"udgen's Positivstellensatz for trace polynomials $a$ that are positive on a compact semialgebraic set $K_S$. The proof is a two-step commutative reduction. Firstly, the constraints $S$ are replaced with central ones by Corollary \ref{c:scal}. Then the abstract version of Schm\"udgen's Positivstellensatz is used in the commutative ring $\cen[a]$.

\begin{thm}[Schm\"udgen's Positivstellensatz for trace polynomials]\label{t:sch}
Let $S\cup\{a\}\subset\STR$ be finite. If $K_S$ is compact and $a|_{K_S}\succ0$, then $a\in\cT_S^{\tr}$.
\end{thm}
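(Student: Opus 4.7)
The approach is to mirror the proof of the Krivine--Stengle Theorem \ref{t:posss}: first collapse the matrix constraints in $S$ to central data, then pass to the commutative subring $\cen[a]$, and finally invoke an abstract commutative Schm\"udgen Positivstellensatz.

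First I would apply Corollary \ref{c:scal} to replace $S$ by a finite subset $S'\subset\cen\cap\cT_S^{\tr}$ with $K_{S'}=K_S$, so that $\cT_{S'}^{\tr}\subseteq\cT_S^{\tr}$ and we may assume $S\subset\cen$. Set $A=\cen[a]$; this is a finitely generated commutative $\R$-algebra because $\cen$ is finitely generated and $a$ is integral over $\cen$ by Cayley--Hamilton. Let $T\subset A$ be the commutative preordering generated by $S\cup\TPc$. Since $a=a^{\ti}$, every element of $A$ is $\ti$-symmetric, so each square in $A$ is a hermitian square in $\TR$ and lies in $\TP$; combined with the closure properties of the cyclic preordering $\cT_S^{\tr}$ (namely that $\cT_S^{\tr}\cap\cen$ is multiplicatively closed and $c\cdot\TP\subseteq \cT_S^{\tr}$ for $c\in \cT_S^{\tr}\cap\cen$, via $cpp^{\ti}=pcp^{\ti}$), one checks $T\subseteq\cT_S^{\tr}$. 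It therefore suffices to prove $a\in T$.

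Next I would verify the hypotheses of the commutative Schm\"udgen theorem in $A$. Proposition \ref{p:ord}(2) shows that $a\in P\setminus -P$ for every ordering $P$ of $A$ containing $T$, so the positivity input is automatic. Compactness is where Theorem \ref{t:hom} plays the decisive role: any $\R$-algebra homomorphism $\psi\colon A\to R$ (with $R$ real closed) sending $T$ into $R_{\ge 0}$ restricts on $\cen$ to a map $\phi$ with $\phi(\TPc)\subseteq R_{\ge 0}$, so Theorem \ref{t:hom} lifts $\phi$ to evaluation at some tuple $\alpha\in R^{gn^2}$, and the condition $\phi(S)\subseteq R_{\ge 0}$ forces $\alpha$ into the $R$-semialgebraic set defined by $S$. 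Since $K_S\subset\R^{gn^2}$ is compact, Tarski transfer yields a uniform bound on $\|\alpha\|$; as $\psi(a)$ is an eigenvalue of the matrix $a(\alpha)$, Cayley--Hamilton forces $|\psi(a)|$ to be bounded as well. This gives archimedean-ness of $T$ in $A$.

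Finally, the abstract Schm\"udgen Positivstellensatz \cite{Sch} applied in the finitely generated commutative $\R$-algebra $A$ with archimedean preordering $T$ and strict positivity of $a$ on every ordering containing $T$ yields $a\in T\subseteq \cT_S^{\tr}$.

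The main obstacle I anticipate is the second step, transporting geometric compactness of $K_S\subset\mat^g$ into algebraic archimedean-ness of $T$ inside $\cen[a]$. This is precisely where Theorem \ref{t:hom} is indispensable, since it identifies $\TPc$-nonnegative homomorphisms from $\cen$ with genuine matrix evaluations; without it, one cannot rule out ``phantom'' orderings of $\cen[a]$ that fail to come from $K_S$ and would destroy the uniform bounds needed to invoke the commutative theorem.
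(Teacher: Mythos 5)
Your proposal is essentially correct and follows the same three-step structure as the paper's proof: reduce to $S\subset\cen$ via Corollary~\ref{c:scal}, pass to the finitely generated commutative algebra $\cen[a]$ with the preordering generated by $S\cup\TPc$, and invoke Scheiderer's abstract Schm\"udgen theorem. Two small points worth tightening, though.

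First, the paper establishes \emph{weak} archimedean-ness of $T$ more directly than you do: since $K_S$ is compact, for any $b\in\cen[a]\subset\STR$ there is $\beta$ with $\beta\pm b\succeq 0$ on $K_S$, and Proposition~\ref{p:ord}(1) (whose proof already encapsulates the Theorem~\ref{t:hom}~$+$~Tarski-transfer work you redo inline) immediately gives $\beta\pm b\in P$ for every ordering $P\supseteq T$. Your homomorphism-lifting argument reaches the same conclusion but re-derives that machinery, and it also only explicitly bounds $\psi(a)$ --- for weak archimedean-ness you need $\psi$ bounded on all of $\cen[a]$, so you should also note that the generators of $\cen$ are bounded (which does follow from $\|\alpha\|\le N$, but should be said).

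Second, what you obtain this way is weak archimedean-ness, not archimedean-ness; the passage from weak to genuine archimedean for preorderings in finitely generated $\R$-algebras is precisely the content of \cite[Theorem 3.6]{Sch}, which then combines with \cite[Proposition 3.3]{Sch} (or \cite[Theorem 4.3]{Mon}) and Proposition~\ref{p:ord}(2) to yield $a\in T\subseteq\cT_S^{\tr}$. Your phrasing ``This gives archimedean-ness of $T$'' conflates the two notions, but since you cite the abstract Schm\"udgen theorem at the end, the argument is recoverable.
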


\begin{proof}
First we apply Corollary \ref{c:scal} to reduce to the case $S\subset \cen$. Let $T$ be the preordering in $\cen[a]$ generated by $S\cup\TPc$. Note that $K_S=K_T$ and $T\subset \cT^{\tr}_S$.

Let $b\in \cen[a]$ be arbitrary. Since $K_S$ is compact, there exists $\beta\in\R_{\ge0}$ such that $\beta\pm b |_{K_S}\succeq0$. Then $\beta\pm b\in P$ for every ordering $P$ in $\cen[a]$ containing $T\supset S\cup\TPc$ by Proposition \ref{p:ord}. In the terminology of \cite{Sch}, $T$ is weakly archimedean. Since $\cen[a]$ is a finitely generated $\R$-algebra, $T$ is an archimedean preordering in $\cen[a]$ by the abstract version of Schm\"udgen's Positivstellensatz \cite[Theorem 3.6]{Sch}. Similarly, Proposition \ref{p:ord} implies $a\in P\setminus -P$ for every ordering $P$ in $\cen[a]$ containing $T\supset S\cup\TPc$, so $a\in T$ by \cite[Proposition 3.3]{Sch} or \cite[Theorem 4.3]{Mon}.
\end{proof}

\begin{cor}\label{c:sch}
Let $S\subset\STR$ be finite. Then $\cT_S^{\tr}$ is archimedean if and only if $K_S$ is compact.
\end{cor}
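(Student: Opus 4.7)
The plan is to prove the two implications separately, with the nontrivial direction being an immediate application of Theorem \ref{t:sch} combined with the characterization of archimedeanity in Corollary \ref{c:arch1}.

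For the implication $(\Leftarrow)$, assume $K_S$ is compact. Then the continuous map $X\mapsto \tr(\sum_j X_jX_j^{\ti})$ is bounded on $K_S$, so one can choose $\rho\in\Q_{>0}$ large enough so that the trace polynomial $a:=\rho-\sum_j\Xi_j\Xi_j^{\ti}\in\STR$ satisfies $a|_{K_S}\succ 0$. By Theorem \ref{t:sch} (Schm\"udgen's Positivstellensatz for trace polynomials) one obtains $a\in\cT_S^{\tr}$, and Corollary \ref{c:arch1} then yields that $\cT_S^{\tr}$ is archimedean.

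For the implication $(\Rightarrow)$, assume $\cT_S^{\tr}$ is archimedean. By Corollary \ref{c:arch1} there exists $\rho\in\Q_{>0}$ with $\rho-\sum_j\Xi_j\Xi_j^{\ti}\in\cT_S^{\tr}$. Since every element of $\cT_S^{\tr}$ is positive semidefinite on $K_S$ (this is the $\TR$-analog of the proposition stated in Section \ref{s:intro}, and follows directly from the description in Lemma \ref{l:gen} of the generators of $\cT_S^{\tr}$ together with $s(X)\succeq 0$ for $X\in K_S$, $s\in S$), we obtain $\sum_j X_jX_j^{\ti}\preceq \rho I_n$ for every $X\in K_S$. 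Hence $K_S$ is bounded, and being closed in $\mat^g$ as an intersection of closed sets, it is compact.

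The only possible obstacle is ensuring strict positivity in the $(\Leftarrow)$ direction so that Theorem \ref{t:sch} applies; this is handled simply by enlarging $\rho$ by any fixed positive rational. No further technicalities are required.
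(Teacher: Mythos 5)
Your proof is correct and matches the intended argument: the nontrivial direction $(\Leftarrow)$ is exactly the application of Theorem~\ref{t:sch} to $a=\rho-\sum_j\Xi_j\Xi_j^{\ti}$ together with Corollary~\ref{c:arch1}, and the direction $(\Rightarrow)$ unwinds the paper's earlier observation that $K_{\cQ}$ is compact whenever $\cQ$ is archimedean, using the fact that every element of $\cT_S^{\tr}$ is positive semidefinite on $K_S$. Both directions are sound; the small step of passing from a bound on $\tr\bigl(\sum_j X_jX_j^{\ti}\bigr)$ to a bound on $\sum_j X_jX_j^{\ti}$ itself is immediate since the matrix is positive semidefinite.
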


\subsection{Putinar's Positivstellensatz for generic matrices}

Our next theorem is a Putinar-type Positivstellensatz for generic matrices on compact semialgebraic sets, which requires a functional analytic proof. While the proof generally follows a standard outline (using a separation argument followed by a Gelfand-Naimark-Segal construction), several modifications are needed. For instance, the separation is taken to be extreme in a convex sense, and polynomial identities techniques are applied to produce $n\times n$ matrices.

A set $\cQ\subseteq \SGM$ is a {\bf quadratic module} if
$$1\in\cQ, \quad \cQ+\cQ\subseteq \cQ, \quad \quad h\cQ h^{\ti}\subseteq \cQ\ \ \forall h\in\GM.$$
We say that $\cQ$ is {\bf archimedean} if for every $h\in\GM$ there exists $\rho\in\Q_{>0}$ such that $\rho-aa^{\ti}\in\cQ$. As in Corollary \ref{c:arch1} we see that a quadratic module is archimedean if and only if it contains $\rho-\sum_j\Xi_j\Xi_j^{\ti}$ for some $\rho\in\Q_{>0}$.

\begin{thm}[Putinar's Positivstellensatz for generic matrices]\label{t:arch1}
Let $\cQ\subset\SGM$ be an ar\-chi\-medean quadratic module and $a\in\SGM$. If $a|_{K_{\cQ}}\succ0$, then $a\in \cQ$.
\end{thm}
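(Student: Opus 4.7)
The plan is to argue by contradiction using Hahn--Banach separation followed by a Gelfand--Naimark--Segal construction, and then to convert the resulting irreducible representation into an honest evaluation on $n\times n$ real matrices via polynomial identity techniques.

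First, suppose $a\notin\cQ$. The archimedean property gives, for every $s\in\SGM$, a $\rho\in\Q_{>0}$ with $\rho\pm s\in\cQ$, so the convex set
\[
C=\bigl\{L:\SGM\to\R\ \R\text{-linear}\colon L(1)=1,\ L(\cQ)\subseteq\R_{\ge0}\bigr\}
\]
is weak-$*$ compact. Since $\cQ$ is a convex cone and $a\notin\cQ$, Eidelheit/Hahn--Banach separation produces $L\in C$ with $L(a)\le 0$; applying Krein--Milman to the face $C\cap\{L(a)\le 0\}$, we may assume $L$ is an extreme point. Extend $L$ to a $\ti$-symmetric $\R$-linear functional on $\GM$ via $L(h):=L\bigl(\tfrac12(h+h^{\ti})\bigr)$; then $\langle h,k\rangle:=L(k^{\ti}h)$ is positive semidefinite, since $h^{\ti}h\in\cQ$.

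Next, quotient $\GM$ by the null space of this form and complete to a real Hilbert space $H$. Left multiplication gives a $*$-representation $\pi:\GM\to B(H)$, with $\|\pi(h)\|^2\le \rho$ whenever $\rho-h^{\ti}h\in\cQ$, so every $\pi(h)$ is bounded. The image $\xi=[1]$ of $1$ is a unit cyclic vector satisfying $L(h)=\langle\pi(h)\xi,\xi\rangle$, and extremeness of $L$ forces $\pi$ to be irreducible (a nontrivial positive contraction in $\pi(\GM)'$ would split $L$ strictly in $C$). By Amitsur--Levitzki, $\GM$ satisfies the standard polynomial identity $s_{2n}$, so $\pi(\GM)$ does too; Kaplansky's theorem then identifies $\pi(\GM)$ as a finite-dimensional central simple $*$-algebra of degree $d\le n$, with the involution inherited from $H$ being positive.

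Finally, using the classification of finite-dimensional simple $*$-algebras with positive involution (cf.\ the embeddings constructed in the proof of Proposition \ref{p:posinv}), together with a PI-theoretic augmentation to reach size exactly $n$, we obtain a $*$-homomorphism $\phi:\GM\to\opm_n(R)$ over some real closed $R\supseteq\R$. Setting $X_j:=\phi(\Xi_j)$, for every $s\in\cQ$ cyclicity of $\xi$ and $h^{\ti}sh\in\cQ$ yield $\pi(s)\succeq 0$, hence $s(X)=\phi(s)\succeq 0$; thus $X\in K_\cQ$ over $R$. On the other hand, $\langle\pi(a)\xi,\xi\rangle=L(a)\le 0$ shows $\pi(a)$ is not positive definite, so $a(X)=\phi(a)\not\succ 0$. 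Tarski's transfer principle then yields such an $X\in\mat^g$, contradicting $a|_{K_\cQ}\succ 0$.

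\emph{Main obstacle.} The delicate step is the last matrix realization. Kaplansky only supplies degree $\le n$, so $\pi(\GM)$ may embed naturally into $\opm_m$ with $m<n$, and naïve padding by the zero representation $\Xi_j\mapsto 0$ fails because generators of $\cQ$ can have negative scalar parts at the origin. The PI-theoretic remedy exploits the universal property $\GM\cong\R\mx/\JJ(n,\ti)$ together with openness of $\{X:a(X)\succ 0\}$ and Tarski transfer, choosing the augmentation so that it maps $\cQ$ into the PSD cone; this is precisely where the argument diverges from the $\TR$-version, in which Theorem \ref{ta:hom} handles the analogous step directly.
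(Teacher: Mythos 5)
Your outline reproduces the paper's six-step argument quite faithfully: Eidelheit separation, passage to an extreme point via Banach--Alaoglu and Krein--Milman, GNS construction, irreducibility from extremality of $L$, a PI-theoretic identification of the image $\cA=\pi(\GM)$ as a finite-dimensional central simple $*$-algebra of degree $\le n$, and finally a matrix realization. Your use of Amitsur--Levitzki together with Kaplansky's theorem, where the paper invokes the $*$-version of Posner's theorem, is a harmless variation; both deliver a central simple $*$-algebra of bounded degree with positive involution and $*$-center $\R$.

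But the step you flag as the ``main obstacle'' is not just delicate --- it is the crux, and your proposal does not actually close it. You write that a ``PI-theoretic augmentation'' combined with openness of $\{X:a(X)\succ 0\}$ and Tarski transfer over a real closed $R\supseteq\R$ should produce an $X\in K_\cQ$ at which $a$ is not positive, but you give no construction; and the Tarski/real-closed-field device you appeal to belongs to the paper's proof of the trace-ring version (Theorem \ref{t:arch}), where it is mediated by the extension theorem \ref{ta:hom}. No analogue of that theorem is available for $\GM$ (the center $\ceng$ lacks the trace data that powers Theorem \ref{ta:hom}), and as you yourself observe, naive padding fails because of constants. The paper's resolution is purely structural and stays over $\R$: it shows that $\cA$, having positive involution and $*$-center $\R$ and satisfying every polynomial $*$-identity of $(\mat,\ti)$, falls under the Procesi--Schacher classification into one of $(\opm_{n'}(\R),\ti)$, $(\opm_{n'}(\C),*)$, $(\opm_{n'/2}(\HH),\si)$, and then uses Proposition \ref{p:ts} and Remark \ref{r:ts} to bound the degrees so that $(\cA,*)$ admits a $*$-embedding into $(\mat,\ti)$. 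This converts $\hat M_{\Xi_j}$ directly into $X_j\in\mat$ with $X\in K_\cQ$ and $\langle a(X)\eta,\eta\rangle=L(a)\le 0$ --- no real-closed-field detour, no separate treatment of constants. Until you supply an argument of that type (or an equivalent one) controlling the degree and the form of the involution on $\cA$, the proposal stops one step short of a proof.
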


\begin{proof}
Assume $a\in \SGM\setminus\cQ$. We proceed in several steps.

\smallskip {\sc Step 1:} Separation.\\
Consider $\cQ$ as a convex cone in the vector space $\SGM$ over $\R$. Since $\cQ$ is archimedean, for every $s\in\STR$ there exists $\varepsilon\in\Q_{>0}$ such that $1\pm\varepsilon s\in\cQ$, which in terms of \cite[Definition III.1.6]{Bar} means that $1$ is an algebraic interior point of the cone $\cQ$ in $\STR$. By the Eidelheit-Kakutani separation theorem \cite[Corollary III.1.7]{Bar} there exists a nonzero $\R$-linear functional $L_0:\SGM\to\R$ satisfying $L_0(\cQ)\subseteq\R_{\ge0}$ and $L_0(a)\le 0$. Moreover, $L_0(1)>0$ because $\cQ$ is archimedean, so after rescaling we can assume $L_0(1)=1$. Let $L:\GM\to\R$ be the symmetric extension of $L_0$, i.e., $L(f)=\frac12 L_0(f+f^{\ti})$ for $f\in\GM$.

\smallskip {\sc Step 2:} Extreme separation.\\
Now consider the set $\cC$ of all linear functionals $L':\GM\to \R$ satisfying $L'(\cQ)\subseteq\R_{\ge0}$ and $L'(1)=1$. This set is nonempty because $L\in\cC$. Endow $\GM$ with the norm
$$\|p\|=\max\left\{\|p(X)\|_2\colon X\in \mat^g, \|X\|_2\le1\right\}.$$
By the Banach-Alaoglu theorem \cite[Theorem III.2.9]{Bar}, the convex set $\cC$ is weak*-compact. Thus by the Krein-Milman theorem \cite[Theorem III.4.1]{Bar} we may assume that our separating functional $L$ is an extreme point of $\cC$.

\smallskip {\sc Step 3:} GNS construction.\\
On $\GM$ we define a semi-scalar product $\langle p, q\rangle=L(q^{\ti} p)$. By the Cauchy-Schwarz inequality for semi-scalar products,
$$\cN=\left\{q \in \GM \mid L(q^{\ti}q)=0\right\}$$
is a linear subspace of $\GM$. Hence
\begin{equation}\label{e:gns}
\langle\overline p,\overline q\rangle=L(q^{\ti} p)
\end{equation}
is a scalar product on $\GM/\cN$, where $\overline p=p+\cN$ denotes the residue class of $p\in\GM$ in $\GM/\cN$. Let $H$ denote the completion of $\GM/\cN$ with respect to this scalar product. Since $1\not\in \cN$, $H$ is non-trivial.

Next we show that $\cN$ is a left ideal of $\GM$. Let $p,q\in\GM$. Since $\cQ$ is archimedean, there exists $\varepsilon>0$ such that $1-\varepsilon p^{\ti}p\in \cQ$ and therefore
\begin{equation}\label{e:ineq}
0\le L(q^{\ti}(1-\varepsilon p^{\ti}p)q)\le L(q^{\ti}q).
\end{equation}
Hence $q\in\cN$ implies $pq\in\cN$.

Because $\cN$ is a left ideal, we can define linear maps
$$M_p :\GM/\cN\to\GM/\cN,\qquad\overline q\mapsto \overline{pq}$$
for $p\in\GM$. By \eqref{e:ineq}, $M_p$ is bounded and thus extends to a bounded operator $\hat M_p$ on $H$.

\smallskip {\sc Step 4:} Irreducible representation of $\GM$.\\
The map
$$\pi: \GM \to \cB(H), \qquad p\mapsto \hat M_p$$
is clearly a $*$-representation, where $\cB(H)$ is endowed with the adjoint involution $*$. Observe that $\eta=\overline 1\in H$ is a cyclic vector for $\pi$ by construction and
\begin{equation}\label{e:pil}
L(p)=\langle \pi(p)\eta,\eta\rangle.
\end{equation}
Write $\cA=\pi(\GM)$. We claim that the self-adjoint elements in the commutant $\cA'$ of $\cA$ in $\cB(H)$ are precisely real scalar operators. Let $P\in \cA'$ be self-adjoint. By the spectral theorem, $P$ decomposes into real scalar multiples of projections belonging to $\{P\}''\subseteq \cA'$. So it suffices to assume that $P$ is a projection. By way of contradiction suppose that $P\notin \{0,1\}$; since $\eta$ is cyclic for $\pi$, we have $P\eta\neq0$ and $(1-P)\eta\neq0$. Hence we can define linear functionals $L_j$ on $\GM$ by
$$L_1(p)=\frac{\langle\pi(p)P\eta,P\eta\rangle}{\|P\eta\|^2}
\qquad\text{and}\qquad
L_2(p)=\frac{\langle\pi(p)(1-P)\eta,(1-P)\eta\rangle}{\|(1-P)\eta\|^2}$$
for all $p\in \GM$. One checks that $L$ is a convex combination of $L_1$ and $L_2$. Since also $L_j\in\cC$, we obtain $L=L_1=L_2$ by the extreme property of $L$. Let $\lambda=\|P\eta\|^2$; then \eqref{e:pil} implies
$$\langle \pi(p)\eta,\lambda\eta\rangle
=\lambda\langle\pi(p)\eta,\eta\rangle
=\langle\pi(p)P\eta,P\eta\rangle
=\langle P\pi(p)\eta,P\eta\rangle
=\langle \pi(p)\eta,P\eta\rangle$$
for all $p\in \GM$. Therefore $P\eta=\lambda\eta$ since $\eta$ is a cyclic vector for $\pi$. Then $\lambda\in\{0,1\}$ since $P$ is a projection, a contradiction.

Next we show that $\pi$ is an irreducible representation. Suppose that $U\subseteq H$ is a closed $\pi$-invariant subspace and $P:H\to U$ the orthogonal projection. If $p\in\GM$ and $p^{\ti}=\pm p$, then
$$\pi(p)P=P\pi(p)P=\pm(P\pi(p)P)^*=\pm(\pi(p)P)^*=P\pi(P).$$
Consequently $P\in \cA'$ and hence $P\in\R$. Since $P$ is an orthogonal projection, we have $P\in\{0,1\}$, so $\pi$ is irreducible.

\smallskip {\sc Step 5:} Transition to $n\times n$ matrices.\\
We claim that $\cA$ is a prime algebra. Indeed, suppose $a\cA b=0$ for $a,b\in\cA$. If $b\neq0$, then there is a $u\in H$ with $bu\neq0$. Since $\pi$ is irreducible, the vector space $V=\cA bu$ is dense in $H$. Now $aV=0$ implies $aH=0$, i.e., $a=0$.

Since the $*$-center of $\cA$ equals $\R$, we have $\pi(\ceng)=\R$, so $\cA$ is generated by $\pi(\Xi_j)$ for $1\le j\le g$ as an $\R$-algebra with involution. Let $f\in\R\mx$ be a polynomial $*$-identity of $(\mat,{\ti})$. Then $f(p_1,\dots,p_k,p_1^{\ti},\dots,p_k^{\ti})=0$ for all $p_i\in\GM$. Therefore
$$f(\pi(p_1),\dots,\pi(p_k),\pi(p_1)^*,\dots,\pi(p_k)^*)=\pi(f(p_1,\dots,p_k,p_1^{\ti},\dots,p_k^{\ti}))=0,$$
so $f$ is a polynomial $*$-identity for $\cA$. By the $*$-version of Posner's theorem \cite[Theorem 2]{Row1} it follows that $\cA$ is central simple algebra of degree $n'\le n$ with involution $*$ and with $*$-center $\R$. Furthermore, the involution on $\cA$ is positive since it is a restriction of the adjoint involution on $\cB(H)$. By \cite[Theorem 1.2]{PS}, $\cA$ is $*$-isomorphic to one of
$$(\opm_{n'}(\R),\ti),\qquad (\opm_{n'}(\C),*),\qquad (\opm_{n'/2}(\HH),\si).$$
Since $\cA=\Phi(\GM)$, $\cA$ satisfies all polynomial $*$-identities of $(\mat,\ti)$. If $(\cA,*)\cong (\opm_{n'/2}(\HH),\si)$, then Proposition \ref{p:ts} implies $\frac{n'}{2}\le 2n$. Similarly, $(\cA,*)\cong (\opm_{n'}(\C),*)$ implies $2n'\le n$ by Remark \ref{r:ts}. Hence in all cases there exists a $*$-embedding of $(\cA,*)$ into $(\mat,\ti)$, so we can assume that $X_j:=\hat M_{\Xi_j}\in\mat$, $*=\ti$ and $\eta\in\R^d$. Since $L(\cQ)\subseteq\R_{\ge0}$, \eqref{e:pil} implies that $q(X)$ is positive semidefinite for all $q\in\cQ$, so $X\in K_{\cQ}$.

\smallskip {\sc Step 6:} Conclusion.\\
By \eqref{e:gns} we have
$$0\ge L(a)=\langle \overline a,\overline 1\rangle=\langle a(X,X^{\ti})\eta,\eta\rangle.$$
Therefore $a$ is not positive definite at $X\in K_{\cQ}$.
\end{proof}

\subsection{Putinar's theorem for trace polynomials}

Our final result in this section is Putinar's Positivstellensatz for trace polynomials, Theorem \ref{t:arch}. Our proof combines functional analytic techniques from the proof of Theorem \ref{t:arch1} with an algebraic commutative reduction.

\begin{lem}\label{l:archcen}
Let $\cQ\subset\STR$ be an archimedean cyclic quadratic module and $c\in\cen$. If $c|_{K_{\cQ}}>0$, then $c\in \cQ$.
\end{lem}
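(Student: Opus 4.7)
The plan is to reduce to the classical (commutative) archimedean Positivstellensatz for the finitely generated $\R$-algebra $\cen$, with $M := \cQ \cap \cen$ playing the role of an archimedean quadratic module. The key enabling observation is that $\cen$ embeds into $\matpoly$ as scalar matrices, so every pure trace polynomial is fixed by $\ti$; in other words, $\cen \subseteq \STR$.

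First I would verify that $M$ is an archimedean quadratic module in the commutative sense on $\cen$. By Lemma~\ref{l:gen}(1), $M = \tr(\cQ)$. For $a \in \cen$ and $q \in M$, the equality $a = a^{\ti}$ together with the cyclic quadratic module axiom give
\[
a^2 q \;=\; a q a^{\ti} \;\in\; \cQ,
\]
and $a^2 q \in \cen$, so $M$ is closed under multiplication by squares. Archimedeanness is inherited: for $c \in \cen$ the archimedean property of $\cQ$ provides $\rho \in \Q_{>0}$ with $\rho - c c^{\ti} \in \cQ$, and $\rho - c^2 = \rho - c c^{\ti} \in \cen \cap \cQ = M$. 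Corollary~\ref{c:scalcqm} yields $K_\cQ = K_M$, so the hypothesis becomes $c|_{K_M} > 0$.

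Second, I would verify the pointwise positivity needed by the abstract archimedean Positivstellensatz on $\cen$: namely, $\phi(c) > 0$ for every $\R$-algebra homomorphism $\phi \colon \cen \to \R$ with $\phi(M) \subseteq \R_{\ge 0}$. Since the minimal cyclic quadratic module is $\TP$ (Lemma~\ref{l:gen}(2) with $S = \emptyset$), we have $\TP \subseteq \cQ$ and consequently $\TPc = \tr(\TP) \subseteq M$, so in particular $\phi(\TPc) \subseteq \R_{\ge 0}$. Theorem~\ref{t:hom} then extends $\phi$ to a homomorphism $\varphi \colon \R[\ulxi] \to \R$, which amounts to a point $X \in \mat^g$. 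For every $c' \in M$ we have $c'(X) = \phi(c') \ge 0$, hence $X \in K_M = K_\cQ$, and the hypothesis delivers $\phi(c) = c(X) > 0$. Putinar's abstract archimedean Positivstellensatz (Jacobi's representation theorem) on the finitely generated $\R$-algebra $\cen$ then concludes $c \in M \subseteq \cQ$.

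The main obstacle is the interplay between the noncommutative cyclic structure of $\cQ$ on $\TR$ and the commutative quadratic module structure on $\cen$. Three ingredients make the reduction work: the inclusion $\cen \subseteq \STR$, which lets the axiom $h \cQ h^{\ti} \subseteq \cQ$ specialize to $a^2 M \subseteq M$; Corollary~\ref{c:scalcqm}, which equates matrix positivity on $K_\cQ$ with scalar positivity on the central slice $K_M$; and Theorem~\ref{t:hom}, which is the bridge lifting $\R$-characters of $\cen$ to actual matrix points in $\mat^g$, thereby converting the matrix-evaluation hypothesis on $c$ into character-wise positivity on $\cen$.
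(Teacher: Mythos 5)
Your proof is correct but follows a genuinely different route from the paper's. The paper proves Lemma~\ref{l:archcen} by running Steps 1--4 of the GNS/separation argument from Theorem~\ref{t:arch1} (Eidelheit--Kakutani separation, Banach--Alaoglu, Krein--Milman, and the Gelfand--Naimark--Segal construction applied to $\TR$), producing a $*$-representation $\pi$ with $\pi(\cen)=\R$, and only then invokes Theorem~\ref{t:hom} to convert $\pi|_{\cen}$ into a matrix point. You instead bypass the functional analysis entirely: you observe that $M=\cQ\cap\cen=\tr(\cQ)$ is an archimedean quadratic module in the commutative finitely generated $\R$-algebra $\cen$, use Corollary~\ref{c:scalcqm} to rewrite the hypothesis as $c|_{K_M}>0$, use $\TPc\subseteq M$ plus Theorem~\ref{t:hom} to lift every nonnegative $\R$-character of $(\cen,M)$ to a point of $K_M=K_{\cQ}$, and finish with Jacobi's representation theorem on $\cen$. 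This is cleaner and more economical for this particular lemma, because the central reduction via $\tr(\cQ)$ is available here; the paper's GNS route is essentially reusing the machinery it must develop anyway for Theorem~\ref{t:arch1}, where $\GM$ has no analogous central slice and the functional-analytic argument is genuinely needed. One small point worth making explicit: after establishing $\rho-a^2\in M$ for all $a\in\cen$, you should note the standard identity $\tfrac{\rho+1}{2}-a=\tfrac12\bigl((a-1)^2+(\rho-a^2)\bigr)\in M$ to conclude archimedeanness in the form required by Jacobi's theorem; you assert the inheritance but do not spell out this step.
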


\begin{proof}
Assume $c\in \STR\setminus\cQ$. Steps 1--4 in the proof of Theorem \ref{t:arch1} work if we replace $\GM$ with $\TR$. Hence we obtain a Hilbert space $H$, a $*$-representation $\pi:\TR\to\cB(H)$ with $\pi(\cen)=\R$, and a cyclic unit vector $\eta\in H$ for $\pi$ such that the linear functional
$$L:\TR\to\R,\qquad p\mapsto \langle\pi(p)\eta,\eta\rangle$$
satisfies $L(\cQ)\subseteq \R_{\ge0}$ and $L(c)\le 0$. Let $\phi=\pi|_{\cen}:\cen\to\R$. By the proof of Theorem \ref{t:hom}, $\phi$ extends to a trace preserving $*$-homomorphism $\Psi:\TR\to\mat$. Let $X_j=\Psi(\Xi_j)\in\mat$ for $j=1,\dots,g$. Because $\pi(\cen)=\R$, we have $L|_{\cen}=\pi|_{\cen}$ and therefore $\tr(q(X))=\phi(\tr(q))\ge0$ for every $q\in\cQ$, so $X\in K_{\cQ}$ by Corollary \ref{c:scalcqm} and $c(X)=\phi(c)\le0$.
\end{proof}

\begin{thm}[Putinar's theorem for trace polynomials]\label{t:arch}
Let $\cQ\subset\STR$ be an archimedean cyclic quadratic module and $a\in\STR$. If $a|_{K_{\cQ}}\succ0$, then $a\in \cQ$.
\end{thm}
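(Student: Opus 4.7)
The plan is to combine the GNS-type argument from the proof of Theorem \ref{t:arch1} with the extension result of Theorem \ref{t:hom} (via Proposition \ref{p:posinv}) to produce the required matrix evaluation. Suppose $a\in\STR\setminus\cQ$; the goal is to exhibit $X\in K_\cQ$ with $a(X)\not\succ 0$, contradicting $a|_{K_\cQ}\succ 0$.

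I would first carry out Steps 1--4 of the proof of Theorem \ref{t:arch1} with $\GM$ replaced by $\TR$: the Eidelheit-Kakutani theorem produces a linear functional $L_0:\STR\to\R$ with $L_0(\cQ)\subseteq\R_{\ge 0}$, $L_0(a)\le 0$, $L_0(1)=1$; the symmetric extension $L:\TR\to\R$ may be chosen extreme in the weak*-compact convex set of such functionals via Banach-Alaoglu and Krein-Milman; the GNS construction yields a $*$-representation $\pi:\TR\to\cB(H)$ with cyclic unit vector $\eta$ such that $L(h)=\langle\pi(h)\eta,\eta\rangle$; and extremality of $L$ forces the self-adjoint commutant of $\pi(\TR)$ in $\cB(H)$ to equal $\R$. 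Since $\cen$ is central and symmetric in $\TR$, this gives $\pi(\cen)=\R$, exactly as in Lemma \ref{l:archcen}.

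Set $\phi=\pi|_\cen:\cen\to\R$. Cyclic quadratic modules contain $\TP$ (since $1\in\cQ$ and $\cQ$ is closed under $h(\cdot)h^{\ti}$ and $\tr$), and an inductive application of the identity $\tr(h_1 c\,h_1^{\ti})=c\cdot\tr(h_1 h_1^{\ti})$ for central $c$ together with closure under $\tr$ shows $\TPc\subseteq\cQ$, so $\phi(\TPc)=L(\TPc)\subseteq\R_{\ge 0}$. The construction in the proof of Theorem \ref{t:hom} now yields a finite-dimensional semisimple $\R$-algebra $\cA=(\R\otimes_\phi\TR)/\cJ$ with positive split involution, $\R$-trace $\chi$, and a trace-preserving $*$-homomorphism $\Phi:\TR\to\cA$; Proposition \ref{p:posinv} then supplies a trace-preserving $*$-embedding $\iota:\cA\hookrightarrow(\mat,\ti,\tr)$. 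Setting $X_j=\iota(\Phi(\Xi_j))$, trace-preservation and $\tr(\cQ)\subseteq\cQ$ give
\[
\tr(q(X))=\chi(\Phi(q))=\phi(\tr(q))=L(\tr(q))\ge 0
\]
for every $q\in\cQ$, so $X\in K_\cQ$ by Corollary \ref{c:scalcqm}.

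The crux is to show $a(X)\not\succ 0$. The representation $\pi$ itself factors through $\cA$: it clearly factors through $\R\otimes_\phi\TR$ since $\pi(\cen)\subseteq\R$, and then kills the Jacobson radical $\cJ$ because its image $\pi(\TR)$ is finite-dimensional (as $\TR$ is finite over $\cen$) and a $*$-subalgebra of $\cB(H)$, hence semisimple. By the same Posner-theorem argument as in Step 5 of Theorem \ref{t:arch1}, $\pi(\TR)$ is central simple over $\R$, so $\pi$ factors through a single simple factor $\cA_1$ of $\cA$ and induces a $*$-isomorphism $\cA_1\cong\pi(\TR)$. Under $\iota$, $\cA_1$ embeds as a block in $\mat$, so $a(X)=\iota(\Phi(a))$ is block-diagonal with one block $*$-isomorphic to $\pi(a)$. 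Since $\langle\pi(a)\eta,\eta\rangle=L(a)\le 0$, $\pi(a)$ is not positive definite as an operator on $H$; transferring along the $*$-isomorphism $\cA_1\cong\pi(\TR)$ and the embedding $\iota|_{\cA_1}$, the corresponding block of $a(X)$ fails to be positive definite as a symmetric matrix, and hence $a(X)\not\succ 0$. The main obstacle is exactly this final transfer: correctly identifying the simple factor of $\cA$ through which $\pi$ factors with a specific matrix block under $\iota$, and verifying that the single operator-level inequality $\langle\pi(a)\eta,\eta\rangle\le 0$ on $H$ propagates, via the $*$-embedding, to genuine failure of positive definiteness of the corresponding matrix block in $\mat$.
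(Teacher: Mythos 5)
Your proposal takes a genuinely different route from the paper. The paper only runs the GNS/Theorem~\ref{t:hom} machinery for \emph{central} $a$ (this is exactly Lemma~\ref{l:archcen}), precisely to sidestep the transfer problem you flag at the end. For general symmetric $a$ the paper instead observes that $a\succ 0$ is equivalent to the positivity of the central elements $\sigma_j=\tr(\wedge^j a)$, applies Lemma~\ref{l:archcen} to $\sigma_j-\varepsilon$ to get them into $\cQ$, forms a commutative quadratic module $Q$ in $\cen[a]$ generated by these $\sigma_j-\varepsilon$, the archimedean bounds $\rho_i-c_i^2$, and $\TPc$, checks $Q$ is archimedean (via integral closure of bounded elements), and invokes Jacobi's representation theorem together with Proposition~\ref{p:ord} to conclude $a\in Q\subset\cQ$. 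This avoids ever having to compare the GNS representation $\pi$ with the trace-preserving $\Psi$ produced by Theorem~\ref{t:hom}.

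Your direct approach is workable but has to close the gap you name: the two $*$-embeddings $\iota|_{\cA_1}$ and $\bar\pi|_{\cA_1}$ of the simple factor $\cA_1$ are a priori unrelated homomorphisms into different operator algebras, so $\langle\pi(a)\eta,\eta\rangle\le 0$ does not directly say anything about $\iota(\Phi(a)|_{\cA_1})$. One way to finish is to argue via minimal polynomials: $b=\Phi(a)|_{\cA_1}$ is $\tau_1$-symmetric in a simple $\R$-algebra $\cA_1$ with positive involution, so under \emph{any} $*$-embedding into a matrix/operator algebra with adjoint/transpose involution it maps to a self-adjoint, hence diagonalizable, element; the eigenvalue set of either image is therefore exactly the real roots of the minimal polynomial of $b$ over the center of $\cA_1$, and this minimal polynomial is intrinsic. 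Since $\pi(a)=\bar\pi(b)$ has some eigenvalue $\le 0$, so does $\iota(b)$, giving $a(X)\not\succ 0$. So your argument can be made rigorous, but it is strictly harder than what the paper does; the paper's commutative reduction buys you the luxury of never having to identify the GNS representation with the trace-preserving $\Psi$, at the cost of invoking the commutative Jacobi/Putinar theorem in $\cen[a]$ and the fact that bounded elements form an integrally closed subring.
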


\begin{proof}
Let $\sigma_j=\tr(\wedge^j a)$ and assume $a|_{K_{\cQ}}\succ0$. Since $K_{\cQ}$ is compact, there exists $\varepsilon>0$ such that $(\sigma_j-\varepsilon)|_{K_{\cQ}} >0$ for all $1\le j\le n$. By Lemma \ref{l:archcen} we have $\sigma_j-\varepsilon\in\cQ$ for all $j$. Let $c_1,\dots,c_N$ be the generators of $\cen$ as an $\R$-algebra. Since $\cQ$ is archimedean, there exist $\rho_1,\dots,\rho_N\in\Q_{>0}$ such that $\rho_i-c_i^2\in\cQ$. Write
$$S=\left\{\sigma_j-\varepsilon,\rho_i-c_i^2\colon 1\le j\le n, 1\le i\le N\right\}$$
and let $Q$ be the quadratic module in $\cen[a]$ generated by $S\cup\TPc$. Clearly we have $Q\subset\cQ$ and $a|_{K_Q}>0$. Let $H_Q\subseteq \cen[a]$ be the subring of bounded elements with respect to $Q$, i.e., $b\in\cen[a]$ such that $\rho-b^2\in Q$ for some $\rho\in\Q_{>0}$.  Because $c_i$ generate $\cen$, we have $\cen\subseteq H_Q$. Since $H_Q$ is integrally closed in $\cen[a]$ by \cite[Section 6.3]{Bru} or \cite[Theorem 5.3]{Schw}, we also have $a\in H_Q$. Hence $Q$ is archimedean. If $P$ is an ordering in $\cen[a]$ containing $S\cup\TPc$, then $a\in P\setminus -P$ by Proposition \ref{p:ord}. Therefore $a\in Q\subset\cQ$ by Jacobi's representation theorem \cite[Theorem 5.4.4]{Mar}.
\end{proof}


\section{Examples}\label{s:exa}

In this section we collect some examples
and counterexamples pertaining to the results presented above.

\begin{exa}\label{exa:cenred}
Proposition \ref{p:scal} states that for every $s\in\STR$ there exists a finite set $S\subset\tr(\cQ^{\tr}_{\{s\}})$ such that $K_{\{s\}}=K_{S}$. Let us give a concrete example of such a set for $n=3$. Let $\sigma_j=\tr(\wedge^j s)$; using the Cayley-Hamilton theorem and the relations between $\sigma_j$ and $\tr(s^i)$ it is easy to check that
\begin{alignat*}{3}
\tr(s) &= \sigma_1, \\
\tr\big((s-\sigma_1)s(s-\sigma_1)\big) &= \sigma_1\sigma_2+3\sigma_3, \\
\tr\big((s^2-\sigma_1s+\sigma_2)s(s^2-\sigma_1s+\sigma_2)\big) &= \sigma_2\sigma_3, \\
\tr\big((s-\sigma_1-1)s(s-\sigma_1-1)\big) &= \sigma_1+4\sigma_2+3\sigma_3+\sigma_1\sigma_2. \\
\end{alignat*}
Denote these elements by $c_1,c_2,c_3,c_4\in\tr(\cQ^{\tr}_{\{s\}})$. We claim that $K_{\{s\}}= K_{\{c_1,c_2,c_3,c_4\}}$. It suffices to prove the inclusion $\supseteq$.

To simplify the notation let $s=s^\ti\in \opm_3(\R)$. If $s\nsucceq0$, then $\sigma_j<0$ for some $j$. If $\sigma_1<0$, then $c_1<0$. Hence assume $\sigma_1\ge0$. If $\sigma_2,\sigma_3$ are of opposite sign, then $c_3<0$. If $\sigma_1>0$ and $\sigma_2,\sigma_3\le0$, then $c_2<0$. Finally, if $\sigma_1=0$ and one of $\sigma_2,\sigma_3$ equals $0$, then $\sigma_3<0$ implies $c_2<0$ and $\sigma_2<0$ implies $c_4<0$.
\end{exa}

\begin{exa}\label{exa:det}
The denominator in Lemma \ref{l:rey} is unavoidable even if $f$ is a hermitian square or $f\in\R[\ulxi]$. For example, let $n=4$ and $a=\Xi_1-\Xi_1^{\ti}$. Then $\det(a)$ is a square in $\R[\ulxi]$. Suppose $\det(a)\in\Omega_4$, i.e.,
$$\det(a)=\sum_i\tr(h_{i1}h_{i1}^{\ti})\cdots \tr(h_{im_i}h_{im_i}^{\ti})$$
for some $h_{ij}\in\TR$. Because $a$ is independent of $\Xi_1+\Xi_1^{\ti}$ and $\Xi_j$ for $j> 1$, we can assume that $h_{ij}$ are polynomials in $a$ and $\tr(a^k)$ for $k\in\N$. Moreover, $\det(a)$ is homogeneous of degree 4 with respect to the entries of $a$, so $h_{ij}$ are of degree at most $2$ with respect to $a$. Finally,  $\tr(a)=0$, so we conclude that
$$\det(a)=\sum_i\tr\left((\alpha_i a^2+\beta_i\tr(a^2))^2\right),\qquad \alpha_i,\beta_i\in\R.$$
If $R$ is a real closed field containing $\R(\ulxi)$, then there exist $\lambda,\mu\in R$ that are algebraically independent over $\R$ such that
$$a=\begin{pmatrix}
0 & -\lambda & 0 & 0 \\
\lambda & 0 & 0 & 0 \\
0 & 0 & 0 & -\mu \\
0 & 0 & \mu & 0 \\
\end{pmatrix}$$
after an orthogonal basis change. Therefore
$$(\lambda\mu)^2=2\sum_i\left(
(\alpha_i\lambda^2+2\beta_i(\lambda^2+\mu^2))^2+
(\alpha_i\mu^2+2\beta_i(\lambda^2+\mu^2))^2
\right),$$
which is clearly a contradiction.
\end{exa}

\begin{exa}\label{exa:gm2}
Next we show that a traceless equivalent of Theorem \ref{t:posss} fails for $\gm_2$. A quadratic module $\cT\subseteq \SGM$ is a {\bf preordering} if $\cT\cap \ceng$ is closed under multiplication. For $S\subset\SGM$ let $\cQ_S$ and $\cT_S$ denote the quadratic module and preordering, respectively, generated by $S$. For example, $\cT_\emptyset=\cQ_\emptyset$ is the set of sums of hermitian squares in $\GM$.

Fix $n=2$ and let
$$s=\Xi_1+\Xi_1^{\ti},\qquad a=\Xi_1-\Xi_1^{\ti},\qquad f=[s^2,a][s,a].$$
Since $f=\tr(s) [s,a]^2$, it is clear that $f|_{K_{\{s\}}}\succeq0$. We will show that there do not exist $t_1,t_2\in\cT_S$ and $k\in\N$ such that
\begin{equation}\label{e:cex}
ft_1=t_1f=f^{2k}+t_2.
\end{equation}

It clearly suffices to assume $g=1$. If $R$ is a real closed field containing $\R(\ulxi)$, then after diagonalizing $s$ we may assume that
$$s=\begin{pmatrix}\lambda_1 & 0 \\ 0 & \lambda_2\end{pmatrix},\qquad
a=\begin{pmatrix}0 & -\mu \\ \mu & 0\end{pmatrix}$$
for some $\lambda_1,\lambda_2,\mu\in R$ that are algebraically independent over $\R$. Let $h\in\gm_2$ be homogeneous of degree $(d,e)$ with respect to $(s,a)$. Then it is not hard to check that
\begin{equation}\label{e:sq1}
hs h^{\ti}=\mu^{2e}\begin{pmatrix} \lambda_1\tilde{h}(\lambda_1,\lambda_2)^2 & 0 \\
0 & \lambda_2\tilde{h}(\lambda_2,\lambda_1)^2 \end{pmatrix}
\end{equation}
for some homogeneous polynomial $\tilde{h}\in\R[y_1,y_2]$ of degree $d$. Therefore
$$\sum_i h_ish_i^{\ti}\in C_2 \quad \Rightarrow \quad h_i=0 \ \forall i,$$
so we deduce that
\begin{equation}\label{e:tq}
\cT_{\{s\}}=(\cT_\emptyset\cap C_2) \cdot\cQ_{\{s\}}.
\end{equation}
Now suppose that \eqref{e:cex} holds for some $t_1,t_2\in\cT_S$ and $k\in\N$. Since $f$ is homogeneous of degree $(5,2)$ with respect to $(s,a)$, $t_1,t_2$ can be taken homogeneous as well. Then $t_2$ is of degree $(10k,4k)$ and $t_1$ is of degree $(10k-5,4k-2)$. In particular, the total degrees of $t_1$ and $t_2$ are odd and even, respectively, so by \eqref{e:tq} we conclude that $t_2\in \cT_\emptyset$ and $t_1$ is of the form $\sum_ih_ish_i^{\ti}$ for $h_i\in\gm_2$. Hence \eqref{e:sq1} implies
$$t_1=\mu^{4k-2}\begin{pmatrix} \lambda_1\sum_i\tilde{h}_i(\lambda_1,\lambda_2)^2 & 0 \\
0 & \lambda_2\sum_i\tilde{h}_i(\lambda_2,\lambda_1)^2 \end{pmatrix}$$
for some homogeneous $\tilde{h}_i \in\R[y_1,y_2]$. Now
$$f=(\lambda_1+\lambda_2)(\lambda_1-\lambda_2)^2\mu^2$$
implies
\begin{equation}\label{e:sq2}
ft_1=(\lambda_1+\lambda_2)\mu^{4k}(\lambda_1-\lambda_2)^2\begin{pmatrix} \lambda_1\sum_i\tilde{h}_i(\lambda_1,\lambda_2)^2 & 0 \\
0 & \lambda_2\sum_i\tilde{h}_i(\lambda_2,\lambda_1)^2 \end{pmatrix}.
\end{equation}
The nonempty set
$$\left\{X\in\opm_2(\R)\colon \det(s(X))<0 \text{ and } \tr(s(X))>0\right\}$$
is open in the Euclidean topology, so by \eqref{e:sq2} there exists $X\in\opm_2(\R)$ such that $(st_1)(X)$ is nonzero and indefinite. However, this contradicts $ft_1=f^{2k}+t_2\in\cT_\emptyset$.
\end{exa}

\begin{exa}\label{exa:cent}
Here we show that the element $t_1$ in Theorem \ref{t:posss} cannot be chosen central in general. Let $n=2$, $s=\frac12(\Xi_1+\Xi_1^{\ti})$ and $S=\{\tr(s)^3,\det(s)^3\}$. Suppose $t_1s=s^{2k}+t_2$ for some $k\in\N$ and $t_1,t_2\in\cT^{\tr}_S$ and $t_1\in T_2$. Let
$$\Psi:\TT\to\opm_2(\R[\zeta]),\qquad \Xi_1\mapsto \begin{pmatrix}\zeta & 0 \\0 & 1\end{pmatrix}.$$
Since $(\zeta+1)^3=\zeta^3+(\frac32\zeta+1)^2+\frac34\zeta^2$, we conclude that $\Psi(t_1)$ belongs to the commutative preordering generated by $\zeta^3$ and $\Psi(t_2)$ belongs to the matricial preordering generated by $\zeta^3$ in $\opm_2(\R[\zeta])$. But then $\Psi(t_1)\Psi(s)=\Psi(s)^{2k}+\Psi(t_2)$ contradicts \cite[Example 4]{Cim}.
\end{exa}

\begin{exa}\label{exa:intro}
Let $f=5 \Tr(\Xi_1\Xi_1^{\ti})-2 \Tr(\Xi_1)(\Xi_1+\Xi_1^{\ti})\in\TT_2$. We will show that $f$ is totally positive and write it as a sum of hermitian squares in $\usa_2$. Write $\Xi=\Xi_1=(\xi_{\imath\jmath})_{\imath\jmath}$ and let
$$u=\begin{pmatrix}\eta_1 & \eta_2\end{pmatrix},\qquad
v=\begin{pmatrix}
\xi_{22}\eta_1 & \xi_{21}\eta_2 & \xi_{12}\eta_2 & \xi_{11}\eta_1 &
\xi_{22}\eta_2 & \xi_{21}\eta_1 & \xi_{12}\eta_1 & \xi_{11}y_2
\end{pmatrix}.$$
Then $u f u^{\ti}$ can be viewed as a quadratic form in $v$ and
$$u f u^{\ti}=v G_{\alpha}v^{\ti},\qquad G_{\alpha}=
\begin{pmatrix}
5 & \alpha & \alpha & -2 & 0 & 0 & 0 & 0 \\
\alpha & 5 & 0 & -\alpha-2 & 0 & 0 & 0 & 0 \\
\alpha & 0 & 5 & -\alpha-2 & 0 & 0 & 0 & 0 \\
-2 & -\alpha-2 & -\alpha-2 & 1 & 0 & 0 & 0 & 0 \\
0 & 0 & 0 & 0 & 1 & -\alpha-2 & -\alpha-2 & -2 \\
0 & 0 & 0 & 0 & -\alpha-2 & 5 & 0 & \alpha \\
0 & 0 & 0 & 0 & -\alpha-2 & 0 & 5 & \alpha \\
0 & 0 & 0 & 0 & -2 & \alpha & \alpha & 5
\end{pmatrix}$$ 
for $\alpha\in\R$. Observe that $G_{\alpha}$ is positive semidefinite if and only if $-\frac72\le \alpha\le -\frac52$. Hence $f$ is indeed totally positive and a sum of hermitian squares in $\opm_2(\R[\ulxi])$. By diagonalizing $G_{\alpha}$ at $\alpha=-\frac52$ we obtain
$$f=\frac52 (\xi_{12}-\xi_{21})^2
+\frac12 \tilde{H}_2\tilde{H}_2^{\ti}+\frac12 \tilde{H}_3\tilde{H}_3^{\ti},$$
where
$$\tilde{H}_2=\begin{pmatrix}
\xi_{12}+\xi_{21} & \xi_{22}-\xi_{11} \\ \xi_{11}-\xi_{22} & \xi_{12}+\xi_{21}
\end{pmatrix},\qquad
\tilde{H}_3=\begin{pmatrix}
2(\xi_{12}+\xi_{21}) & \xi_{11}-3\xi_{22} \\ \xi_{22}-3\xi_{11} & 2(\xi_{12}+\xi_{21})
\end{pmatrix}.$$
Note that while $\tilde{H}_2\tilde{H}_2^{\ti},\tilde{H}_3\tilde{H}_3^{\ti}\in\Sym\TT_2$, we can compute $\cR_2(\tilde{H}_2)=\cR(\tilde{H}_3)=0$, so $\tilde{H}_2,\tilde{H}_3\notin\TT_2$. However, if we set
$$H_1=\Xi-\Xi^{\ti},\qquad
H_2=\Xi\Xi^{\ti}-\Xi^{\ti}\Xi,\qquad
H_3=\Xi^2-2\Xi\Xi^{\ti}+2\Xi^{\ti}\Xi-(\Xi^{\ti})^2,$$
then
$$H_1H_1^\ti=(\xi_{12}-\xi_{21})^2,\qquad
H_2H_2^{\ti}=(\xi_{12}-\xi_{21})^2\tilde{H}_2\tilde{H}_2^{\ti},\qquad
H_3H_3^{\ti}=(\xi_{12}-\xi_{21})^2\tilde{H}_3\tilde{H}_3^{\ti}$$
and so
$$f=\frac52 H_1H_1^{\ti}+\frac12 H_1^{-1}H_2H_2^{\ti}H_1^{-\ti}+\frac12 H_1^{-1}H_3H_3^{\ti}H_1^{-\ti}.$$

\end{exa}


\begin{appendices}

\section{Constructions of the Reynolds operator}\label{a:rey}

In this appendix we describe a few more ways of constructing $\rey$ for the action of $\ort$ on $\matpoly$ defined in Subsection \ref{ss:gentr}. We refer to \cite{Stu} for algorithms for finite group actions. Let $\rey':\R[\ulxi]\to\cen$ be the restriction of $\rey:\matpoly\to\TR$, i.e., the Reynolds operator for the action of $\ort$ on $\R[\ulxi]$ given by
$$f^u=f(u\Xi_1u^{\ti},\dots,u\Xi_gu^{\ti})$$
for $f\in\R[\ulxi]$ and $u\in\ort$.

\subsection{Computing \texorpdfstring{$\rey'$}{Rn'}}

We start by describing two ways of obtaining $\rey'$.

\subsubsection{First method}

We follow \cite[Section 4.5.2]{DK} to present an algorithm for computing $\rey'$. We define a linear map $c\in \R[\ort]^*$:
$$
c(r)=\frac{\rm d}{{\rm d}t}\frac{\rm d}{{\rm d}s}\sum_{i,j=1}^n
r\big((1+se_{ij})(1+te_{ij})\big)-r\big((1+se_{ij})(1+te_{ji})\big)\bigg|_{s=t=0},
$$ 
where $e_{ij}$, $1\leq i,j\leq n$, denote the standard matrix units in $\mat$. (In fact, $c$ equals the Casimir operator of the Lie algebra $\mathfrak{o}_n$ of skew symmetric matrices of the group $\ort$ up to a scalar multiple.) 
For example, if $n=2$, then 
\begin{eqnarray*}
	c(u_{11}u_{22})&=&\frac{\rm d}{{\rm d}t}\frac{\rm d}{{\rm d}s}\Big(
	\big((1+se_{12})(1+te_{12})\big)_{11}\big((1+se_{12})(1+te_{12})\big)_{22}-\\
	&&-\big((1+se_{12})(1+te_{21})\big)_{11}\big((1+se_{12})(1+te_{21})\big)_{22}+\\
	&&+\big((1+se_{21})(1+te_{21})\big)_{11}\big((1+se_{21})(1+te_{21})\big)_{22}-\\
	&&-\big((1+se_{21})(1+te_{12})\big)_{11}\big((1+se_{21})(1+te_{12})\big)_{22}\Big)\bigg|_{s=t=0}\\
	&=&\frac{\rm d}{{\rm d}t}\frac{\rm d}{{\rm d}s}(1-st+1-st)\bigg|_{s=t=0}\\
	&=&-2.
\end{eqnarray*}
For $f\in \R[\ulxi]$, $u=(u_{\imath\jmath})_{\imath\jmath}\in\ort$, write $f^u$ as $\sum_i f_i\mu_i$, 
where $f_i$ are linearly independent polynomials in the variables $\xi_{j\imath\jmath}$ and $\mu_i$ are polynomials in the variables $u_{\imath\jmath}$. Define 
$$\tilde c(f)=\sum f_ic(\mu_i).$$
Find the monic polynomial $p$ of smallest degree such that $p(\tilde c)(f)=0$. 
If $p(0)\neq 0$, set $\rey''(f)=f$. 
If $p(0)=0$, write $p(t)=tq(t)$ and define 
$\rey''(f)=q(0)^{-1}q(\tilde c(f))$.
By \cite[Proposition 4.5.17]{DK}, $\rey''$ defines the Reynolds operator for the action of $\sort$ on $\R[\ulxi]$. 
Since $\ort/\sort \cong\Z/2\Z$, setting $\rey'(f)=\frac{1}{2}(\rey''(f)+\rey''(f)^v)$, where $v$ is an arbitrary element in  $\ort\setminus\sort$, we obtain the Reynolds operator for the action of $\ort$ on $\R[\ulxi]$.

\subsubsection{Second method}

Here we mention another way of computing the Reynolds operator $\rey'$ in terms of an integral. 
This approach is based on the way the invariants of $\ort$ for the action on $\R[\ulxi]$ were described by Procesi in \cite{Pro}. Let $f\in \R[\ulxi]$. We first multihomogenize $f$ as a function $f:\mat^g\to \R$, then multilinearize its homogeneous components $f_i$ and view $f_i$ as an element $\overline f_i\in(\mat^{\otimes d_i})^*$ for $d_i=\deg(f_i)$. 
Since $\mat\cong V^*\otimes V$ for a $n$-dimensional vector space $V$ on which $\ort$ acts naturally, and 
$V^*$ is isomorphic as a $\ort$-module to $V$, $\overline f_i$ can be seen as an element $\tilde f_i\in V^{\otimes 2d_i}$. 
The monomial $\xi_{1\imath_1\jmath_1}\cdots \xi_{d\imath_d\jmath_d}$ corresponds to the element 
$$e_{\imath_1}\otimes e_{\jmath_1}\otimes \cdots\otimes e_{\imath_d}\otimes e_{\jmath_d},$$
where $e_i$, $1\leq i\leq n$, is an orthonormal basis of $V$.
Then we can compute $\rey'(\overline f_i)$ by integrating the function $u\mapsto (\tilde f_i)^u$ over $\ort$. To obtain $\rey'(f_i)$ we need to restitute $\rey'(\overline f_i)$ and multiply the result by a suitable integer. Finally, $\rey'(f)=\sum \rey'(f_i)$.

\subsection{From \texorpdfstring{$\rey'$}{Rn'} to \texorpdfstring{$\rey$}{Rn}}

Once we have $\rey'$, we can compute $\rey$ as follows.

\subsubsection{First method}

Let $f\in \matpoly$. We can assume that $f$ is independent of $\Xi_g$. Let us compute $\rey'(\tr(f\Xi_g))$. Since this is an invariant,  linear in $\Xi_g$, it has the form 
$$\rey'(\tr(f \Xi_g))=\tr\Big(f_0 \Xi_g\Big)$$
for some $f_0\in\TR$. Here we used the fact that $\tr(h\Xi_g^{\ti})=\tr(h^{\ti}\Xi_g)$ for $h\in\TR$. We define $\rey(f)=f_0$. We claim that $\rey:\matpoly\to \TR$ is the Reynolds operator. We have $\rey(f)=f$ for $f\in \TR$ since $\rey'(\tr(f\Xi_g))=\tr(f\Xi_g)$ as $\tr(f\Xi_g)\in \cen$. For $u\in \ort$, $f\in \matpoly$ we have 
$$\tr((f^u)\Xi_g)=\tr((f\Xi_g)^u)=\tr(f\Xi_g)^u,$$ 
where the first equality follows as $\Xi_g$ is an invariant and the second one since $\tr$ is linear. Thus, $\rey(f^u)=\rey(f)$, and $\rey$ is the Reynolds operator.

\subsubsection{Second method}

If we have $\rey'$, then the Reynolds operator can also be computed by expressing an element $f\in \matpoly$ as an $\R(\ulxi)$-linear combination of $\Xi_1^i\Xi_1^{\ti j}$, $0\leq i,j\leq n-1$.  Note that these elements are linearly independent in $\opm_n(\R(\ulxi))$ as there exists $X\in \mat$ such that $X^iX^{\ti j}$, $0\leq i,j\leq n-1$, are linearly independent. We denote $\Xi_1^i\Xi_1^{\ti j}$, $0\leq i,j\leq n-1$, by $y_1,\dots,y_{n^2}$. Let $c$ be a $n^2$-normal (i.e., multilinear and alternating in the first $n^2$-variables) central polynomial of $\mat$ in $2n^2-1$ variables. 
(See e.g.~ \cite[Section 1.4]{Row2} for the construction of such polynomials and for the proofs of their properties mentioned below.)
Since $y_1,\dots,y_{n^2}$ are independent we can find $y_{n^2+1},\dots, y_{2n^2-1}\in \GM$ such that 
$$0\neq c(y_1,\dots,y_{n^2},y_{n^2+1},\dots,y_{2n^2-1})=z\in\ceng.$$
If $g\ge n^2$, we can take $y_{n^2+1}=\Xi_2,\dots,y_{2n^2-1}=\Xi_{n^2}$.
Then $f$ can be written as follows 
\begin{equation}\label{e:xy}
f=\sum_{i=1}^{n^2}(-1)^{i-1} z^{-1}c(f,y_1,\dots,y_{i-1},y_{i+1},\dots,y_d)y_i.
\end{equation}
Let $z_i(f)=\rey'(c(f,y_1,\dots,y_{i-1},y_{i+1},\dots,y_d))$. 
We define 
$$
\rey(f)=\sum_{i=1}^{n^2}(-1)^{i+1}z^{-1} z_i(f)y_i.
$$
If $f\in \TR$, then the coefficients in the expression \eqref{e:xy} are already in $\cen$, so in this case $\rey(f)=f$. Note that $\rey'(z_i(f^u))=\rey'(z_i(f))$ for $u\in \ort$ and $f\in \matpoly$. Therefore $\rey(f^u)=\rey(f)$ and $\rey:\matpoly\to \TR$ is the Reynolds operator.

\section{How not to prove the extension Theorem \ref{t:hom}}\label{a:ps}

One might attempt to prove Theorem \ref{t:hom} using geometric invariant theory of Lie groups \cite{PS1,Bro,CKS}. Here we explain why this approach fails.

Let $G$ be a compact Lie group with an orthogonal representation on $W=\R^N$. The invariant ring $\R[W]^G$ is a finitely generated $\R$-algebra; let $p_1,\dots,p_m$ be its generators. Let $(\cdot,\cdot)$ denote a $G$-invariant inner product on $W$ and its dual on $W^*$. Since the differentials ${\rm d}p_i:W\to W^*$ are $G$-equivariant, we have $({\rm d}p_i,{\rm d}p_j)\in \R[W]^G$. Finally let
$$\Grad=\big(({\rm d}p_i,{\rm d}p_j)\big)_{i,j}\in\opm_m(\R[W]^G).$$

The following theorem is a reformulation of the celebrated Procesi-Schwarz theorem \cite[Theorem 0.10]{PS1} and is essentially due to Schrijver \cite{Scr0} (see also \cite{Scr1}). We thank M. Schweighofer for drawing our attention to Schrijver's  work.

\begin{thm}[Procesi-Schwarz]\label{t:Bps}
Let $\phi:\R[W]^G\to R$ be an $\R$-algebra homomorphism into a real closed field $R\supseteq\R$. Then the following are equivalent:
\begin{enumerate}[label={\rm(\roman*)}]
\item $\phi$ extends to an $\R$-algebra homomorphism $\varphi:\R[W]\to R$;
\item $\phi(\R[W]^G\cap \sum \R[W]^2)\subseteq R_{\ge0}$;
\item $\phi(\Grad) \in \opm_m(R)$ is positive semidefinite.
\end{enumerate}
\end{thm}

\begin{proof}
While (i)$\Rightarrow$(ii) and (i)$\Rightarrow$(iii) are straightforward, (iii)$\Rightarrow$(ii) is involved and proved in \cite[Theorem 0.10]{PS1} and \cite[Subsection 2.7]{CKS}. Hence we are left with (ii)$\Rightarrow$(i). Without loss of generality we can assume that $\phi(\R[W]^G)$ generates $R$ as a field.

First we observe that if $\cR:\R[W]\to\R[W]^G$ is the Reynolds operator for the action of $G$, then $\cR(\sum \R[W]^2)\subseteq \sum \R[W]^2$. Note that since $G$ acts linearly on $W$, the action of $G$ on $\R[W]$ does not increase the degree of polynomials. Since $G$ is compact, $\cR$ is given by the integration formula $\cR(f)=\int_G f^g \, d\mu(g)$, where $\mu$ is the normalized left Haar measure. If $f\in\R[W]$ is of degree $d$, then $\cR(f^2)$ is a limit of sums of squares of degree $2d$. Using Carath\'eodory's theorem \cite[Theorem I.2.3]{Bar} it is easy to see that the cone of sums of squares in $\R[W]$ of degree at most $2d$ is closed in the space of polynomials of degree at most $2d$ (cf. \cite[Section 4.1]{Mar}). Hence we conclude that $\cR(f^2)$ is indeed a sum of squares in $\R[W]$.

Now let $T\subseteq \R[W]$ be the preordering generated by $\phi^{-1}(R_{\ge0})$. We claim that $-1\notin T$. Otherwise $-1=s_0+\sum_{i\ge1}s_it_i$ for some $s_i\in\sum \R[W]^2$ and $t_i\in \phi^{-1}(R_{\ge0})$. By applying $\cR$ we get
\begin{equation}\label{e:b1}
-1=\cR(s_0)+\sum_{i\ge1}\cR(s_i)t_i.
\end{equation}
By the above observation we have $\cR(s_i)\in \R[W]^G\cap \sum \R[W]^2$, so \eqref{e:b1} implies $$-1=\phi(-1)=\phi\left(\cR(s_0)\right)+\sum_{i\ge1}\phi\left(\cR(s_i)\right)\phi(t_i)\ge0,$$
a contradiction. Therefore we can extend $T$ to an ordering $P\subset \R[W]$, which gives rise to a homomorphism $\varphi_0:\R[W]\to R_0$, where $R_0$ is the real closure of the ordered field of fractions of $\R[W]/(P\cap -P)$. Since
$$\ker \phi\subseteq T\cap-T\subseteq P\cap -P = \ker \varphi_0,$$
we see that $\varphi_0$ extends $\phi$ and hence $R\subseteq R_0$. By the Artin-Lang homomorphism theorem \cite[Theorem 4.1.2]{BCR} there exists a homomorphism $\varphi:\R[W]\to R$ satisfying $\ker\varphi=\ker\varphi_0$. Thus $\varphi$ extends $\phi$.
\end{proof}

Theorem \ref{t:Bps} can be used to prove the following weakened version of Theorem \ref{t:hom}.

\begin{cor}\label{c:B}
An $\R$-algebra homomorphism $\phi:\cen\to \R$ extends to an $\R$-algebra homomorphism $\varphi:\R[\ulxi]\to \R$ if and only if $\phi(\TPc)\subseteq \R_{\ge0}$.
\end{cor}

Let us outline the proof. Let $p_1,\dots,p_m$ be generators of the $\R$-algebra $\cen=\R[\ulxi]^{\ort}$. Their differentials ${\rm d}p_i:\mat^g\to(\mat^g)^*$ are $\ort$-equivariant maps. Since we can identify $(\mat^g)^*$ with $(\mat^*)^g$, and the $\ort$-equivariant polynomial maps $\mat^g\to\mat$ are precisely trace polynomials \cite[Theorems 7.1 and 7.2]{Pro}, we have ${\rm d}p_i\in \TR^g$. On $\mat^g$ there is an $\ort$-invariant inner product
$$(X,Y)=\tr\left(\sum_j X_jY_j^\ti\right).$$
Finally, let $\Grad=(({\rm d}p_i,{\rm d}p_j))_{i,j}\in \opm_m(\cen)$.

Let $\phi:\cen\to \R$ be an $\R$-algebra homomorphism. Then Theorem \ref{t:Bps} implies that $\phi$ extends to an $\R$-algebra homomorphism $\varphi:\R[\ulxi]\to \R$ if and only if $\phi(\Grad)\in \opm_m(\R)$ is positive semidefinite. To prove the non-trivial direction in Corollary \ref{c:B} it therefore suffices to show the following.

\begin{lem}\label{l:B}
If $\phi(\TPc)\subseteq \R_{\ge0}$, then $\alpha^\ti\phi(\Grad)\alpha\ge0$ for all $\alpha\in\R^m$.
\end{lem}

\begin{proof}
Denote $h_{ij}=({\rm d} p_i)_j\in\TR$. If $\alpha=(\alpha_i)_i\in \R^m$, then
\begin{align*}
\alpha^\ti \Grad \alpha
&=\sum_{i_1,i_2} \alpha_{i_1}\alpha_{i_2}({\rm d}p_{i_1},{\rm d}p_{i_2}) \\
&=\sum_{i_1,i_2} \alpha_{i_1}\alpha_{i_2}\tr\left(\sum_jh_{i_1j}h_{i_2j}^\ti\right) \\
&=\sum_j\tr\left(\sum_{i_1,i_2} \alpha_{i_1}h_{i_1j}\alpha_{i_2}h_{i_2j}^\ti\right) \\
&=\sum_j\tr\left(\left(\sum_i\alpha_i h_{ij}\right)\left(\sum_i\alpha_i h_{ij}\right)^\ti \right).
\end{align*}
Therefore $\phi(\tr(hh^\ti))\ge0$ for all $h\in\TR$ implies that $\phi(\Grad)$ is positive semidefinite.
\end{proof}

From the proof of Lemma \ref{l:B} we see that to prove Theorem \ref{t:hom} using the Procesi-Schwarz theorem, one would need to extend the chain of equivalences in Theorem \ref{t:Bps} with the condition

\begin{enumerate}[label={\rm(\roman*')}]
\setcounter{enumi}{2}
\item $\alpha^\ti\phi(\Grad)\alpha\ge0$ for all $\alpha\in \R^m$.
\end{enumerate}

However, (iii')$\Rightarrow$(iii) fails in our context.

\def\mg{\operatorname{O}_2(\R)}

\begin{exa}
Let $\mg$ act on $\opm_2(\R)$ by conjugation, i.e., $n=2$ and $g=1$ in the setting of this paper. If $\Xi=\Xi^{(2)}$ is a generic $2\times 2$ matrix
and
$$y_1=\tr(\Xi),\qquad y_2=\tr(\Xi^2),\qquad y_3=\tr(\Xi\Xi^\ti),$$
then $\R[\ulxi]^{\mg}=\R\left[y_1,y_2,y_3\right]$ (see e.g. \cite{ADS}; algebraic independence follows from the Jacobian criterion). For this choice of generators we have
$$H=\begin{pmatrix}
2 & 2y_1 & 2y_1 \\
2y_1 & 4y_3 & 4y_2 \\
2y_1 & 4y_2 & 4y_3
\end{pmatrix}.$$
Let $R$ be the real closure of the rational function field $\R(\ve)$ endowed with the ordering $0<\ve<\alpha$ for every $\alpha\in\R_{>0}$. Consider the $\R$-algebra homomorphism
$$\phi:\R[y_1,y_2,y_3]\to R,\qquad
y_1\mapsto \frac{\ve}{2},\qquad
y_2\mapsto 0,\qquad
y_3\mapsto \frac18 \ve^2\left(1+\sqrt{1+4\ve^2}-2\ve\right).$$
For $\alpha=(\alpha_1,\alpha_2,\alpha_3)\in R^3$ we have
\begin{equation}\label{e:bh}
\begin{aligned}
\frac{\alpha^\ti \phi(\Grad)\alpha}{2}
&=\alpha_1^2+\ve\alpha_1(\alpha_2+\alpha_3)+\frac{\ve^2(1+\sqrt{1+4\ve^2}-2\ve)}{4}(\alpha_2^2+\alpha_3^2) \\
&=\left(\alpha_1+\frac{\ve(\alpha_2+\alpha_3)}{2}\right)^2 + \frac{\ve^2(\sqrt{1+4\ve^2}-2\ve)}{4}\left(\alpha_2-\frac{\alpha_3}{\sqrt{1+4\ve^2}-2\ve}\right)^2- \ve^3\alpha_3^2.
\end{aligned}
\end{equation}
If $\alpha_2,\alpha_3\in \R$ and $\alpha_1\neq0$ or $\alpha_3\neq-\alpha_2$, then
$$\left(\alpha_1+\frac{\ve(\alpha_2+\alpha_3)}{2}\right)^2>\ve^3\alpha_3^2;$$
and if $\alpha_3=-\alpha_2\in\R\setminus\{0\}$, then
$$\frac{\ve^2}{4}\left(\sqrt{1+4\ve^2}-2\ve\right)\left(\alpha_2+\frac{\alpha_2}{\sqrt{1+4\ve^2}-2\ve}\right)^2
=\frac{\ve^2}{2}\left(1+\sqrt{1+4\ve^2}\right)\alpha_2^2>\ve^3\alpha_2^2.$$
Therefore $\alpha^\ti \phi(\Grad)\alpha > 0$ for all $\alpha\in\R^3\setminus\{0^3\}$. On the other hand, from \eqref{e:bh} it is clear that we can choose $\alpha\in R^3$ such that $\alpha^\ti \phi(H)\alpha = -\ve^3<0$, so $\phi(H)$ is not positive semidefinite.
\end{exa}
	
\end{appendices}


\end{document}